\pgfplotsset{compat=1.15}
\tikzset{
    >=stealth,
    every picture/.style={thick},
    graphs/every graph/.style={empty nodes},
}
\tikzstyle{vertex}=[
\tikzstyle{printersafe}=[decoration={snake,amplitude=0pt}]
\newcommand{\id}{\operatorname{id}}
\newcommand{\codim}{\operatorname{codim}}
\newcommand{\rank}{\operatorname{rank}}
\newcommand{\pp}{\mathbb{P}}
\newcommand{\qq}{\mathbb{Q}}
\definecolor{uuuuuu}{rgb}{0.26666666666666666,0.26666666666666666,0.26666666666666666}
  \newtheorem{theorem}{Theorem}[section]
  \newtheorem{lemma}[theorem]{Lemma}
  \newtheorem{proposition}[theorem]{Proposition}
  \newtheorem{corollary}[theorem]{Corollary}
  \newtheorem{claim}[theorem]{Claim}
  \newtheorem{conjecture}[theorem]{Conjecture}
  \newtheorem{notation}[theorem]{Notation}
  \newtheorem{definition}[theorem]{Definition}
  \newtheorem{example}[theorem]{Example}
  \newtheorem{question}[theorem]{Question}
  \newtheorem{construction}[theorem]{Construction}
\newtheorem{remark}[theorem]{Remark}
\theoremstyle{remark}
\numberwithin{equation}{section}
\keywords{Hyperbolicity, adjoint linear systems, homogeneous varieties.}
\subjclass[2020]{Primary: 32J27, 14E30; Secondary: 14M25, 14J28.}
\begin{document}

\title[Hyperbolicity of adjoint linear series on varieties with positive tangent bundle]{Hyperbolicity of adjoint linear series on varieties\\ with positive tangent bundle}

\author[A.~Ito]{Atsushi Ito}
\address{Department of Mathematics, Institute of Pure and Applied Sciences, University of Tsukuba, Tsukuba, Ibaraki, 305-8571, Japan}
\email{ito-atsushi@math.tsukuba.ac.jp}

\author[J.~Moraga]{Joaqu\'in Moraga}
\address{UCLA Mathematics Department, Box 951555, Los Angeles, CA 90095-1555, USA
}
\email{jmoraga@math.ucla.edu}

\author[D.~Raychaudhury]{Debaditya Raychaudhury}
\address{Department of Mathematics, University of Arizona, Tucson, AZ 85721, USA}
\email{draychaudhury@arizona.edu}

\author[W. Yeong]{Wern Yeong}
\address{UCLA Mathematics Department, Box 951555, Los Angeles, CA 90095-1555, USA
}
\email{wyyeong@math.ucla.edu}

\thanks{The first author was partially supported by JSPS KAKENHI Grant Number 21K03201. The second author was partially supported by NSF research grant DMS-2443425. The third and the fourth authors were partially supported by AMS-Simons Travel Grants.}

\begin{abstract}
Let $X$ be a smooth projective variety of dimension $n\geq 3$, and let $L$ be an ample line bundle on $X$. In this article, we study the algebraic hyperbolicity of a very general section of the adjoint linear series $|K_X+mL|$ when the tangent bundle $\mathcal{T}_X$ of $X$ has suitable positivity properties. As a consequence, we show that the linear system $|K_X+mL|$ is hyperbolic (or pseudo-hyperbolic) for $m\geq 3n+1$, for various classes of polarized pairs $(X,L)$, thus providing new evidence of a conjecture that was proposed by the second and fourth authors.
Moreover, when $X$ is abelian, we show that the linear system $|mL|$ is hyperbolic for $m\geq n$, and the same holds when $m\geq n-1$, if $|L|$ has no base divisors. It turns out that these bounds for abelian varieties are sharp.
We also prove analogous statements for Kummer varieties and certain classes of hyperelliptic varieties. 
\end{abstract}

\maketitle

\vspace{-10pt}

\setcounter{tocdepth}{1}

\tableofcontents

\vspace{-30pt}

\section{Introduction}\label{intro}

Kobayashi hyperbolicity 
is a notion of hyperbolicity 
depending on a pseudo-metric associated to any
complex manifold (see~\cite{Kob76}).
Kobayashi hyperbolic manifolds
are important classes of complex manifolds
on which the Kobayashi pseudo-metric
is indeed a metric.
For instance, Kobayashi hyperbolic manifolds admit no non-constant maps from the complex line $\mathbb{C}$. There are several other related hyperbolicity notions such as Brody and arithmetic hyperbolicity, see \cite{Jav} for a survey.
In \cite{Dem97}, Demailly introduced the notion of algebraic hyperbolicity for a smooth projective variety $X$ as
an algebraic analogue of Kobayashi hyperbolicity (see Definition~\ref{def:alg-hyp}). The algebraic hyperbolicity condition
imposes a uniform lower bound
on the genus of curves $C\hookrightarrow X$
which is linear in terms of the degree of $C$ 
with respect to some fixed polarization on $X$. It is expected that (very) general hypersurfaces of large degree on smooth projective varieties are hyperbolic.
There are many theorems incarnating this principle.
For instance, in~\cite{MR3735863}, Brotbek proved that for any smooth projective variety $X$ and an ample line bundle $L$ on $X$, there exists a constant $d_0:=d_0(X,L)$ such that 
a general element $H\in |d_0L|$ is Kobayashi hyperbolic. 
For a very ample line bundle $L$ we know that it suffices to take $d_0\leq (n+1)^{2n+6}$ where $n$ is the dimension of $X$. 
In the case of algebraic hyperbolicity, there are many effective results for projective space.
For instance, very general hypersurfaces in $\pp^n$ of degree $d$ are algebraically hyperbolic if $d\geq 2n-2$ and $n\geq 5$, or if $d\geq 2n-1$ and $n\geq 3$ \cite{CL1, CL2, CoRi1, Ein88, Pac04, Voi96, Voi98, Yeo24}. 
It follows from the argument in \cite{Ein88} that for any smooth projective variety $X$ of dimension $n$ and any ample and globally generated line bundle $L$ on $X$, the linear system $\left|dL \right|$ is {algebraically} hyperbolic whenever $d\geq 2n$.

\smallskip

Motivated by Fujita-type conjectures, the second and fourth authors have recently made the following

\begin{conjecture}[{\cite[Conjecture 1.1]{MY24}}]\label{conj}
    Let $X$ be a smooth projective variety of dimension $n\geq 3$, and let $L$ be an ample line bundle on $X$. Then the linear system $|K_X+mL|$ is hyperbolic if $m\geq 3n+1$.
\end{conjecture}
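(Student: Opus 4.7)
The plan is to reduce the conjecture to an algebraic hyperbolicity statement for a very general member of $|K_X+mL|$, and then produce enough twisted one-forms on such a member by exploiting positivity of the tangent bundle $\mathcal{T}_X$ (which the paper's title signals is the governing hypothesis).

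First, for $m \geq 3n+1$ one expects $|K_X+mL|$ to be very ample by Fujita-type results (known in many relevant cases, e.g.\ toric, homogeneous, or abelian), so by Bertini a very general $H \in |K_X+mL|$ is smooth and irreducible. The remaining task is to exhibit a uniform $\epsilon > 0$ such that every integral curve $C \subset H$ with normalization $\nu : \tilde C \to C$ satisfies $2g(\tilde C) - 2 \geq \epsilon\,(L|_H \cdot C)$. This is the content of \emph{algebraic hyperbolicity of $H$}, from which hyperbolicity of the entire linear system follows by standard deformation arguments.

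Second, adjunction gives the exact sequence
\[
0 \longrightarrow \mathcal{O}_H(-H) \longrightarrow \Omega^1_X|_H \longrightarrow \Omega^1_H \longrightarrow 0,
\]
so that sections of $\Omega^1_H \otimes (L|_H)^a$ can be constructed from global sections of $\Omega^1_X \otimes L^a$, provided the obstruction living in $H^1(X, \Omega^1_X \otimes L^a \otimes \mathcal{O}_X(-H))$ is controlled. This is precisely where the positivity of $\mathcal{T}_X$ enters: if $\mathcal{T}_X$ is nef, ample, or globally generated, then dualizing and combining with Kodaira-type vanishing one finds that $\Omega^1_X \otimes L^a$ is globally generated (or at least jet-separating) for $a$ of size linear in $n$, supplying the required global one-forms on $H$.

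Third, following a Clemens--Voisin "moving" strategy on the universal family $\mathcal{H} \to |K_X+mL|$: a section of $\Omega^1_H \otimes (L|_H)^a$ vanishing along $C$, when deformed infinitesimally via the Kodaira--Spencer map of the family, should produce a nonzero global one-form on $\tilde C$ twisted by a bounded power of $L|_H$. Counting zeros of such a form and comparing to $\deg_{L|_H}(C)$ then yields the desired linear bound $2g(\tilde C) - 2 \geq \epsilon\,(L|_H \cdot C)$.

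The main obstacle will be twofold. The secondary difficulty is dealing with highly singular $C$, where the normalization can lose sections; this is usually absorbed by working with the sheaf of log-differentials or by running the argument on the singular curve itself and bounding correction terms by the $\delta$-invariant. The real difficulty, however, is hitting the sharp bound $m \geq 3n+1$: a naive implementation loses a factor of $n$ in each of the three steps above (Fujita threshold, positivity budget for $\Omega^1_X \otimes L^a$, and the Kodaira--Spencer deformation), and matching $3n+1$ exactly will require balancing these contributions carefully, which is almost certainly the reason the paper only proves the conjecture under an explicit positivity assumption on $\mathcal{T}_X$ rather than in full generality.
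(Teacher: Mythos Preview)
The statement you are attempting to prove is labeled \emph{Conjecture} in the paper, and the paper does \emph{not} prove it in full generality. There is therefore no ``paper's own proof'' of this statement to compare against. The paper establishes special cases (Theorems~\ref{main1}, \ref{main4}, \ref{thm3'}, Corollaries~\ref{cor-ab-1}, etc.) under explicit positivity hypotheses on $\mathcal{T}_X$, exactly as you anticipate in your final paragraph. So your proposal should be read as a strategy for those partial results, not for the conjecture itself.

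Even at that level, your mechanism differs from the paper's. You work with the cotangent bundle: restrict $\Omega^1_X\otimes L^a$ to a member $H$ via the conormal sequence and try to manufacture twisted one-forms on $H$ that restrict nontrivially to curves. The paper instead works with the \emph{syzygy bundle} $M_{\mathcal{E}}$ of $\mathcal{E}=K_X+mL$, identified with the vertical tangent sheaf $\mathcal{T}_{\mathcal{Y}/X}$ of the universal hypersurface (Proposition~\ref{proposition:cori}). The governing inequality comes from bounding $\deg N_{f/Y}$ for a curve $f:C\to Y$ by the degree of a quotient $Q$ of $M_{\mathcal{E}}|_C$ (Propositions~\ref{proposition:almost-homogeneous}, \ref{proposition:nef-tangent}), and then invoking nefness of $M_{\mathcal{E}}\langle\delta L\rangle$ (Theorem~\ref{thm:hyperbolicity-of-N+L}). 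The positivity of $\mathcal{T}_X$ enters only to guarantee that the residual quotient $\mathcal{Q}_3$ of $\mathcal{T}_X|_C$ has nonnegative degree in diagram~\eqref{equation:setup}; it is \emph{not} used to produce sections of $\Omega^1_X\otimes L^a$.

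There is also a genuine gap in your second step. You write that if $\mathcal{T}_X$ is nef, then ``dualizing and combining with Kodaira-type vanishing'' makes $\Omega^1_X\otimes L^a$ globally generated for $a$ linear in $n$. But nefness of $\mathcal{T}_X$ pushes $\Omega^1_X$ in the \emph{negative} direction, so dualizing gives you nothing toward global generation of $\Omega^1_X\otimes L^a$; if anything it makes the required $a$ larger, not smaller. The paper sidesteps this entirely: the object whose positivity it must establish is $M_{K_X+mL}\langle\delta L\rangle$, and Section~\ref{positivity} is devoted to proving this nefness by direct arguments (restriction to hyperplane sections in the regular case, GV/M-regularity on abelian varieties, etc.) that have nothing to do with $\Omega^1_X$.
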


This conjecture was confirmed in~\cite[Theorem 1.2]{MY24} for smooth toric varieties
as well as in some cases of Gorenstein toric varieties (see, e.g.,~\cite[Theorem 1.3]{MY24}).
More recently, it was studied for general Fano threefolds with Picard number one \cite{Seo25}. 
Moreover, the conjecture has been confirmed for
smooth projective spherical varieties with smooth orbit closures \cite{KwSe25}, which includes rational homogeneous varieties and toric varieties (generalizing \cite[Theorem 1.2]{MY24}).
In this article, we study the hyperbolicity
of adjoint linear systems on projective varieties whose tangent bundles are suitably positive. 

\smallskip

We phrase our results in terms of nef values of polarized pairs whose definition we now recall. Given a polarized smooth projective variety $(X,{L})$ of dimension $n$, its {\it nef value} is defined as follows $$\tau({L}):=\min\left\{t\in\mathbb{R}\mid K_X+tL\textrm{ is nef}\right\}.$$ It is known that $\tau(L)\leq n+1$, and it is $\leq n$ unless $(X,L)\cong(\mathbb{P}^n,\mathcal{O}_{\mathbb{P}^n}(1))$ (Maeda). We refer to \cite{BS} for a comprehensive literature regarding the structures of polarized algebraic varieties with large nef values.

\smallskip

Our first result is for regular varieties with very ample polarizations:

\begin{theorem}\label{main1}
Let $X$ be a smooth regular projective variety of dimension $n\geq 3$ and let
${L}$ be a very ample line bundle on $X$.
If the tangent bundle $\mathcal{T}_X$ is nef (resp. pseudo-nef, almost nef), 
then the linear system $|K_X+mL|$ is hyperbolic (resp. pseudo-hyperbolic, almost hyperbolic) for $m>\max\left\{n,n+2\tau({L})-2\right\}$.
\end{theorem}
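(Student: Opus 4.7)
The plan is to establish algebraic hyperbolicity of a very general $D \in |K_X+mL|$ by proving a uniform inequality $2g(\tilde{C}) - 2 \geq \epsilon \cdot (L \cdot C)$ for every irreducible reduced curve $C \subset D$ with normalization $\nu:\tilde{C} \to C$, with pseudo- and almost-hyperbolic versions in the weaker-positivity cases. For $m > n \geq \tau(L)$ the divisor $K_X + mL$ is very ample, so a general $D$ is smooth by Bertini; the regularity $h^1(X, \mathcal{O}_X) = 0$ together with a Lefschetz-type theorem forces every line bundle on a very general $D$ to come from $X$, so that positivity of line bundles transfers between $X$ and $D$.

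The key positivity input is the canonical identification
\[
\Omega^1_X\ \cong\ \bigwedge^{n-1}\mathcal{T}_X \otimes \omega_X,
\]
which rewrites
\[
\Omega^1_X \otimes \mathcal{O}_X(K_X+mL)\ \cong\ \bigwedge^{n-1}\mathcal{T}_X \otimes \mathcal{O}_X\bigl(2(K_X+\tau(L)L)+(m-2\tau(L))L\bigr).
\]
Nefness (resp.\ pseudo-/almost-nefness) of $\mathcal{T}_X$ is inherited by $\bigwedge^{n-1}\mathcal{T}_X$, and $2K_X+mL$ is ample for $m>2\tau(L)$ since $K_X+\tau(L)L$ is nef by definition of the nef value and $L$ is very ample. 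Restricting the conormal sequence
\[
0 \to \mathcal{O}_D(-(K_X+mL)) \to \Omega^1_X|_D \to \Omega^1_D \to 0
\]
and twisting by $\mathcal{O}_D(K_X+mL)$ then exhibits $\Omega^1_D \otimes \mathcal{O}_D(K_X+mL)$ as a nef quotient of the ample (resp.\ pseudo-/almost-nef) bundle $\Omega^1_X \otimes \mathcal{O}_X(K_X+mL)|_D$ on $D$.

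To convert this into a genus bound, I would pull back along $\nu$ and extract, via a Bogomolov-type foliation argument, a suitable invertible subsheaf of $\Omega^1_D$ whose restriction to $\nu(\tilde C)$ maps nontrivially into $\omega_{\tilde{C}}$. The positivity of $\bigwedge^{n-1}\mathcal{T}_X$ yields a quantitative lower bound on the degree of such a subsheaf; combined with the nef-value bound $|K_X\cdot C| \leq \tau(L)(L\cdot C)$ and the rank $n-2$ of the conormal sheaf $N^*_{C/D}$ (appearing as the kernel of $\Omega^1_D|_C \twoheadrightarrow \omega_{\tilde C}$), this produces the inequality
\[
2g(\tilde C) - 2 \geq (m - n - 2\tau(L) + 2)(L \cdot C),
\]
positive precisely when $m > n + 2\tau(L) - 2$. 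Here the $2\tau(L)$ comes from the doubling of $K_X$ in $K_D = (2K_X+mL)|_D$ via adjunction, while the $n-2$ is the contribution of the conormal direction along which $\Omega^1_D$ fails to descend directly to $\omega_{\tilde C}$.

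The main obstacle is exactly this last step: mere nefness of $\Omega^1_D \otimes \mathcal{O}_D(K_X+mL)$ does not immediately give a positive lower bound on $2g(\tilde C)-2$, since the natural quotient only yields $2g(\tilde C)-2 \geq -(K_X+mL)\cdot C$. One must sharpen the positivity—say, by showing $\Omega^1_D \otimes \mathcal{O}_D(-cL)$ remains nef off a proper subset for $c$ as large as $m-n-2\tau(L)+2$—and this is where the combinatorial balance between the rank-$(n-1)$ positivity of $\bigwedge^{n-1}\mathcal{T}_X$ and the negative line-bundle twist must be tracked carefully. In the pseudo-nef and almost-nef cases, positivity fails on a proper closed (respectively, countable union of closed) subset of $X$, so we must restrict to curves $C$ avoiding this locus—which is precisely what yields the pseudo- and almost-hyperbolic conclusions.
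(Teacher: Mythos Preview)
Your approach has a genuine gap, and you yourself identify it: the step where you pass from nefness of $\Omega^1_D\otimes\mathcal{O}_D(K_X+mL)$ to the genus inequality $2g(\tilde C)-2\geq (m-n-2\tau(L)+2)(L\cdot C)$ is not carried out. The conormal sequence only presents $\Omega^1_D$ as a \emph{quotient} of $\Omega^1_X|_D$, so any positivity you extract this way gives at best $2g(\tilde C)-2\geq -\tau(L)(L\cdot C)$, which is vacuous. The ``Bogomolov-type foliation argument'' you invoke to sharpen this is not specified, and I do not see how to make it work: there is no canonical rank-one subsheaf of $\Omega^1_D$ mapping nontrivially to $\omega_{\tilde C}$ whose degree you can bound below by the claimed quantity using only the positivity of $\bigwedge^{n-1}\mathcal{T}_X$. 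The rank and twist bookkeeping you describe does not correspond to any standard mechanism.

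The paper's proof takes a completely different route. Rather than working statically on a fixed $D$, it uses the \emph{versal family}: one deforms the curve together with the hypersurface inside the universal family $\mathcal{Y}\to B$ over (an \'etale cover of) $H^0(\mathcal{E})$, where $\mathcal{E}=K_X+mL$. The vertical tangent sheaf $\mathcal{T}_{\mathcal{Y}/X}$ is identified with the pullback of the syzygy bundle $M_{\mathcal{E}}$, and the diagram comparing $\mathcal{T}_\mathcal{C}$, $g^*\mathcal{T}_\mathcal{Y}$, and $g^*\pi_2^*\mathcal{T}_X$ produces a surjection $M_{\mathcal{E}}|_{C_b}\twoheadrightarrow Q$ with $\deg N_{f/Y}\geq \deg Q$; nefness of $\mathcal{T}_X$ enters only to control the torsion quotient $\mathcal{Q}_3$. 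The crucial positivity input is then not that of $\Omega^1_X$, but the global generation of $M_{K_X+mL}\otimes L$ for $m\geq n+1$ (established by an inductive slicing argument using regularity of $X$ and very ampleness of $L$). This yields $\deg Q\langle L\rangle\geq 0$, hence $2g-2\geq (2K_X+(m-n+2)L)\cdot C$, which is positive exactly when $m>n+2\tau(L)-2$. The deformation-theoretic mechanism---relating the normal bundle of $C$ in $D$ to a quotient of the syzygy bundle---is what your static approach via $\Omega^1_D$ is missing.
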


We refer to Definition \ref{def:alg-hyp-ls} for the various variants of the hyperbolicity notion used in the above statement. Observe that the number $n+2\tau(L)-2\leq 3n$, whence the above proves Conjecture \ref{conj} for any regular variety with nef tangent bundle  for very ample polarizations. 

\smallskip



It is known that projective (almost) homogeneous spaces have (almost) nef tangent bundle (see Section \ref{prelim} for the definitions).
We have an ``almost homogeneous variant'' of the above, see Corollary \ref{cor-ahvariant}.

\smallskip

We now turn our attention to irregular varieties with nef tangent bundle. We remark that algebraic hyperbolicity of irregular varieties is not very well-studied. One of the well-known results in this direction is Bloch's conjecture \cite{Blo26}, which asserts that if $q(X)>\dim X$, then every entire holomorphic curve $f:\mathbb{C}\to X$ is algebraically degenerate, i.e., $f(\mathbb{C})$ lies in a proper subvariety of $X$.
This result was proved by Kawamata \cite{Kaw80}, who verified Ochiai's conjectured lemma that is sufficient for Bloch's conjecture (see \cite{Och77}).

\smallskip

When $X$ is a complex torus, Green \cite{Gre78} proved that a closed complex subspace of $X$ is Kobayashi hyperbolic if and only if it contains no translate of a positive-dimensional subtorus of $X$.
In fact, for subvarieties of abelian varieties, we note the following that has been observed by Caucci \cite[Remark 2.2]{caucci_KH} after the first version of this work was posted on the arXiv:

\begin{remark}\label{Bloch}
    A subvariety of an abelian variety is algebraically hyperbolic if and only if it contains no translate of a positive-dimensional abelian subvariety.
\end{remark}

Using \cite{Br, Dem97, Gre78, JK}, he also observes that for subvarieties of abelian varieties, the notions of Kobayashi, Brody and algebraic hyperbolicity are equivalent. It is worth mentioning that by the work of Yamanoi, a general type subvariety $X$ of an abelian variety is algebraically hyperbolic outside the special subset of $X$\footnote{Here, the special subset of $X$ is the Zariski closure of the union of all images of non-constant rational maps $A\dashrightarrow X$ from abelian varieties $A$.} which is proper (see, e.g.,~\cite[Corollary 1]{Yam15}).

As mentioned, we mostly deal with the case when the tangent bundle $\mathcal{T}_X$ of a smooth projective variety $X$ of dimension $\geq 3$ is nef, in which case $q(D)\leq \dim D$ for any ample divisor $D$ of $X$. It is a result of Campana--Peternell \cite{CP91} that the Albanese morphism of such varieties are smooth fibrations. Moreover, by a result of Demailly--Peternell--Schneider \cite{DPS}, the Albanese map, up to an \'etale cover, is a smooth fibration whose fibers are Fano varieties with nef tangent bundle \cite[Main Theorem]{DPS} (which are conjecturally even rational homogeneous spaces by Campana--Peternell Conjecture). For such fibrations, we have 

\begin{theorem}\label{main4}
    Let $X$ be a smooth projective irregular variety of dimension $n\geq 3$ with nef tangent bundle $\mathcal{T}_X$. 
    Assume that the Albanese morphism ${\rm alb}_X\colon X \rightarrow A$ of $X$ has regular fibers. Let $L$ be an ample and globally generated line bundle on $X$, and $m$ be an integer  such that 
    \begin{enumerate}
        \item[\emph{(i)}] $L|_{F_p}$ is very ample for all fibers $F_p$ of ${\rm alb}_X$, and 
        \item[\emph{(ii)}] $m>\max\left\{n,n+2\tau({L})-2\right\}$ is an integer such that
        $mL-\beta(L_A){\rm alb}_X^*(L_A)$ is ample for some ample line bundle $L_A$ on $A$.
    \end{enumerate}  Then the linear system $|K_X+mL|$ is hyperbolic. 
\end{theorem}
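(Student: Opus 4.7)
The plan is to reduce via the Albanese fibration, handling curves in a very general $H \in |K_X+mL|$ according to whether or not $\operatorname{alb}_X$ contracts them. By the Demailly--Peternell--Schneider structure theorem cited in the introduction, after replacing $X$ by a suitable \'etale cover -- which preserves the nefness of $\mathcal{T}_X$, hypotheses (i) and (ii), and the desired algebraic hyperbolicity conclusion -- I may assume $\alpha \coloneqq \operatorname{alb}_X \colon X \to A$ is a smooth fibration whose fibers $F_p$ are regular Fano varieties with nef tangent bundle. Since $K_A = 0$, adjunction gives $K_X|_{F_p} = K_{F_p}$. Fix a very general $H \in |K_X+mL|$; the goal is to produce a uniform $\epsilon > 0$ satisfying $2g(\widetilde{C}) - 2 \geq \epsilon\,(C \cdot H)$ for every irreducible curve $C \subset H$.

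\smallskip

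\emph{Fiber curves.} If $C \subset F_p$, then $H \cap F_p$ is a very general member of $|K_{F_p} + mL|_{F_p}|$. Hypothesis (i) gives that $L|_{F_p}$ is very ample, and since $K_X + \tau(L)L$ is nef its restriction $K_{F_p} + \tau(L)\, L|_{F_p}$ is also nef, so $\tau(L|_{F_p}) \leq \tau(L)$. Together with $\dim F_p \leq n$, this yields $m > \max\{\dim F_p, \dim F_p + 2\tau(L|_{F_p}) - 2\}$, so Theorem \ref{main1} applies to $(F_p, L|_{F_p})$ and produces a hyperbolicity constant for $H \cap F_p$; uniformity of the constant in $p$ is then achieved by a relative Hilbert-scheme/Noetherian argument over the base, giving some $\epsilon_1 > 0$.

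\smallskip

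\emph{Non-fiber curves and the main obstacle.} If $\alpha(C) = B$ is a positive-dimensional curve in $A$, I use (ii) to decompose $mL = M + \beta(L_A)\,\alpha^*L_A$ with $M$ ample on $X$, where $\beta(L_A)$ is the hyperbolicity threshold (furnished by the abelian-variety portion of the paper) ensuring that very general members of $|\beta(L_A)L_A|$ are algebraically hyperbolic on $A$. Intersecting gives
\[
C \cdot H \;=\; C \cdot (K_X + M) \;+\; \beta(L_A)\, \deg(\widetilde{C}/B)\,(B \cdot L_A),
\]
and one controls the second summand by a genus estimate on $\widetilde{B}$ coming from hyperbolicity of a general section of $|\beta(L_A)L_A|$ through $B$, pulled back by $\alpha$, while the first is controlled by a hyperbolicity estimate on $(X, K_X+M)$ (a variant of Theorem \ref{main1} where $M$ replaces $mL$, invoking ampleness of $M$ and nefness of $\mathcal{T}_X$). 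The exceptional possibility that $B$ is a translate of a positive-dimensional abelian subvariety of $A$ is ruled out by Theorem \ref{Bloch} and Bloch--Green applied to the very general $H$, in conjunction with the ampleness contribution of $M$ preventing $\alpha^{-1}(B) \cap H$ from acquiring such a component. The technical core -- and the hardest step -- is precisely calibrating $\beta(L_A)$ against the sharp threshold from the paper's abelian-variety theorems and rigorously excluding the elliptic-translate locus for a very general choice of $H$. Once this is carried out, uniformity yields some $\epsilon_2 > 0$, and algebraic hyperbolicity of $H$ follows with $\epsilon \coloneqq \min\{\epsilon_1, \epsilon_2\}$.
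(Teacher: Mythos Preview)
Your approach diverges fundamentally from the paper's and contains genuine gaps that I do not see how to close.

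\textbf{Misreading of $\beta(L_A)$.} You describe $\beta(L_A)$ as ``the hyperbolicity threshold \dots\ ensuring that very general members of $|\beta(L_A)L_A|$ are algebraically hyperbolic on $A$.'' This is not what $\beta(L_A)$ is: it is the Jiang--Pareschi base-point freeness threshold, a rational number in $(0,1]$ measuring positivity of $L_A$ (see Section~\ref{positivity}). The linear system $|\beta(L_A)L_A|$ is not even defined in general, and in any case Proposition~\ref{intro-prop1-Ab} shows that small multiples of an ample class on an abelian variety need not be hyperbolic. The role of $\beta(L_A)$ in hypothesis~(ii) is purely to guarantee, via Corollary~\ref{betacor} and Theorem~\ref{theorem_nefness}, that the push-forward $\pi_*(M_{K_X+mL}\otimes L)$ is GV on $A$ and hence that the twisted syzygy bundle $M_{K_X+mL}\otimes L$ is nef on $X$.

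\textbf{Circularity and missing input in the non-fiber case.} Your control of $C\cdot(K_X+M)$ appeals to ``a variant of Theorem~\ref{main1} where $M$ replaces $mL$,'' but Theorem~\ref{main1} is for \emph{regular} varieties; invoking it on the irregular $X$ is precisely what Theorem~\ref{main4} is supposed to establish. Likewise, your genus control on $\widetilde{B}=\alpha(C)$ via ``hyperbolicity of a general section of $|\beta(L_A)L_A|$ through $B$'' fails for the reason above, and you give no mechanism to exclude the case where $B$ is an elliptic curve (Theorem~\ref{Bloch} concerns subvarieties of $A$, not of $H\subset X$). Even the fiber case is incomplete: Theorem~\ref{main1} requires $\dim F_p\geq 3$, which is false whenever $q(X)\geq n-2$, and the promised uniformity of $\epsilon_1$ in $p$ is only asserted.

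\textbf{What the paper does instead.} The paper bypasses any fiber/non-fiber dichotomy. It applies the general criterion Theorem~\ref{thm:hyperbolicity-of-N+L} with $\mathcal{E}=K_X+mL$ and $\delta=1$: the ampleness of $2K_X+(m-n+2)L$ follows from $m>n+2\tau(L)-2$, and the nefness of $M_{K_X+mL}\otimes L$ is the substantial input, proved in Corollary~\ref{betacor} (via Theorem~\ref{theorem_nefness}) using GV/M-regularity theory for the Albanese fibration---this is exactly where hypothesis~(ii) on $\beta(L_A)$ enters.
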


In the above, $\beta(-)$ is the base-point freeness threshold introduced by Jiang--Pareschi \cite{JP}: given a polarized abelian variety $(A,L)$, the invariant $\beta(L)\in (0,1]$ measures the positivity of $L$, in particular, $L$ is base-point free if and only if this invariant is $<1$ (see Section \ref{positivity} for details).

\smallskip

Our next result is an immediate consequence of Theorem \ref{main4}:

\begin{corollary}\label{cor4}
{Let $X$ be a smooth projective irregular variety of dimension $n\geq 3$ with nef tangent bundle $\mathcal{T}_X$. Assume that the Albanese morphism ${\rm alb}_X\colon X \rightarrow A$ of $X$ has regular fibers. 
Let $L$ be an ample and globally generated line bundle on $X$ such that
\begin{itemize}
\item[\em{(i)}] $L|_{F_p}$ is very ample for every fiber $F_p$ of ${\rm alb}_X$ (which holds, for example, if $L$ is very ample, or if the fibers are rational homogeneous spaces), and
\item[\em{(ii)}]  ${L}- \frac{1}{n+1}\cdot \mathrm{alb}_X^*({L}_A) $ is ample for some ample line bundle $L_A$ on $A$.
\end{itemize}
Then the linear system $|K_X+mL|$ is hyperbolic for $m>\max\left\{n,n+2\tau({L})-2\right\}$.}
\end{corollary}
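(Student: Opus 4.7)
The plan is to deduce Corollary \ref{cor4} as an immediate consequence of Theorem \ref{main4}. Hypothesis (i) of the corollary is identical to hypothesis (i) of the theorem, so the only content to check is that hypothesis (ii) of the corollary implies hypothesis (ii) of the theorem, namely that $mL - \beta(L_A)\,\mathrm{alb}_X^*(L_A)$ is ample for every integer $m$ in the stated range.

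First I would rewrite the assumption that the $\qq$-divisor $L - \tfrac{1}{n+1}\mathrm{alb}_X^*(L_A)$ is ample as the statement that the integral class $(n+1)L - \mathrm{alb}_X^*(L_A)$ is ample. Then I would use the decomposition
$$
mL - \beta(L_A)\,\mathrm{alb}_X^*(L_A) = \beta(L_A)\bigl((n+1)L - \mathrm{alb}_X^*(L_A)\bigr) + \bigl(m - (n+1)\beta(L_A)\bigr)\,L.
$$
Since $\beta(L_A) \in (0,1]$ by the Jiang--Pareschi normalization and since $m > n$ forces $m \geq n+1$, the coefficient $m - (n+1)\beta(L_A)$ is nonnegative. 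Hence the right-hand side expresses $mL - \beta(L_A)\,\mathrm{alb}_X^*(L_A)$ as a positive $\rr$-multiple of an ample class plus a nonnegative $\rr$-multiple of the ample class $L$, and is therefore ample. With this positivity in hand, Theorem \ref{main4} applies verbatim and yields that $|K_X+mL|$ is hyperbolic.

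For the two parenthetical examples under condition (i): if $L$ is very ample on $X$, then $L|_{F_p}$ is automatically very ample; and if the fibers $F_p$ of $\mathrm{alb}_X$ are rational homogeneous spaces, then any ample line bundle on $F_p$ is very ample, so the ample and globally generated bundle $L$ satisfies the requirement. In either case, hypothesis (i) of Theorem \ref{main4} is satisfied without further work.

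There is essentially no genuine obstacle here---the substantive content is carried by Theorem \ref{main4}, and this corollary is just the convenient reformulation. The only point requiring a moment's care is that $\beta(L_A)$ need not be rational, so the displayed decomposition must be read as an $\rr$-linear combination of $\rr$-Cartier divisors; once that is granted, ampleness of the sum is immediate from the standard fact that ample plus nef is ample.
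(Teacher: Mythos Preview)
Your proposal is correct and matches the paper's approach exactly: the paper's proof is the single line ``Follows immediately from Theorem \ref{main4} since $\beta(L_A)\leq 1$,'' and your decomposition simply makes explicit how the bound $\beta(L_A)\leq 1$ together with $m\geq n+1$ forces $mL-\beta(L_A)\,\mathrm{alb}_X^*(L_A)$ to be ample.
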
 

\begin{remark}
\em{Recall that the {\it stable irregularity} $\Tilde{q}(X)$ is defined as follows:
$$\Tilde{q}(X):=\max\left\{q(\Tilde{X})\mid \text{there exists } f:\Tilde{X}\to X\textrm{ where $f$ is \'etale, and $\Tilde{X}$ is compact and connected}\right\}.$$
By \cite[Proposition 2.8]{CP91}, if a smooth projective irregular variety $X$ with nef tangent bundle satisfies $\Tilde{q}(X)=q(X)$, then $q(F_p)=0$ for every fiber $F_p$ of its Albanese morphism.}
\end{remark}

The above corollary is applicable in many situations; we give one example below:

\begin{example}
\em{Let $\mathcal{E}$ be a semistable bundle of rank $r\geq 3$ and slope $\mu$ on an elliptic curve $E$, and set $\pi:X:=\mathbb{P}(\mathcal{E})\to E$. It follows that $|K_X+mL|$ is a hyperbolic linear system for all $m\geq \dim X+1=r+1$ if $L$ is ample and base-point free. 

Indeed, since the relative tangent bundle $\mathcal{T}_{X/E}$ of $\pi$ is nef (see for e.g., \cite[Proof of Proposition 6.1]{Lee}), it follows that $\mathcal{T}_X$ is nef by the definition of $\mathcal{T}_{X/E}$.} Denote the numerical class of a fiber by $F$ and let $\lambda_{\mathcal{E}}:=c_1(\mathcal{O}_{\mathbb{P}(\mathcal{E})}(1))-\mu F$. By \cite[Theorem 3.1]{Miy}, we have $${\rm Nef}(X)=\mathbb{R}_{\geq 0}\lambda_{\mathcal{E}}\oplus\mathbb{R}_{\geq 0}F.$$
If $L\equiv a\lambda_{\mathcal{E}}+bF$ is ample and globally generated line bundle, then $a,b>0$. We claim that $b\geq \frac{1}{r}$. To see this, note that it must be the case that $L \equiv s c_1(\mathcal{O}_{\mathbb{P}(\mathcal{E})}(1)) +t F$ for $s,t \in \mathbb{Z}$.
Then $L \equiv s\lambda_{\mathcal{E}} + (s \mu +t) F $ and hence $b= s \mu +t >0$ implies $b \geq \frac{1}{r}$. Thus the consequence follows from Corollary \ref{cor4}.
\end{example}

Among the classes of varieties with nef tangent bundles, some special ones are the classes of abelian varieties, or more generally homogeneous spaces, and finite \'etale quotients of abelian varieties (i.e., hyperelliptic varieties). Let us start by discussing the hyperbolicity property of adjoint linear systems on abelian varieties. 

\smallskip

It is now well-established that sheaves on abelian varieties enjoy remarkable regularity properties. The study on this topic dates back to Green--Lazarsfeld's generic vanishing condition \cite{GL}, which was subsequently studied by Hacon \cite{Hac} and extended vastly by Pareschi--Popa \cite{PaPo1, PP2, PaPo3}. The notion of M-regularity emerged through these works, which was extended to $\mathbb{Q}$-twisted setting by Jiang--Pareschi \cite{JP}. Our next result is a criterion for hyperbolicity of the linear system $|L|$ through its jet-separation property and M-regularity (see Section \ref{prelim} for definitions):

\begin{theorem}\label{mainmod}
    Let $(X,L)$ be a polarized abelian variety of dimension $g\geq 3$ and $0 \leq k \leq g-1$. Assume that $X$ contains no positive-dimensional abelian subvarieties of dimension at most $k$. \footnote{Note that this condition is vacuously true if $k=0$.}
Let $p \geq 1$ be an integer such that
\begin{align}\label{eq_cor_p-jet_ample}
\binom{p+k+1}{k+1} + k+1 \geq g.
\end{align}
If $L$ separates $p$-jets at any $x \in X$,
then the linear system $|L|$ is hyperbolic.
In particular, $|L|$ is hyperbolic
if $\mathcal{I}_o \langle \frac{1}{p+1} L \rangle $ is M-regular.
\end{theorem}

\begin{remark}\label{rmk1}
\em{Taking $k=0$ and $p=g-2$ in the above, we see that if $L$ separates $(g-2)$-jets at any $x \in X$, then the linear system $|L|$ is hyperbolic.
In particular,  $|L|$ is hyperbolic if 
$\mathcal{I}_o \langle \frac{1}{g-1} L \rangle $ is M-regular. 
This was originally proven in the first version of this manuscript by using Theorem \ref{thm:hyperbolicity-of-N+L} below. (See Remark \ref{rem_proof_of_first_version}.)}
\end{remark}


As an immediate consequence, we obtain the following

\begin{corollary}\label{cor_KH_mL}
Let $(X,L)$ be a polarized abelian variety of dimension $g\geq 3$ and $0 \leq k \leq g-1$. Assume that $X$ contains no positive-dimensional abelian subvarieties of dimension at most $k$.
Then: 
\begin{enumerate}
\item[$(1)$] The linear system $|mL| $ is hyperbolic if $m \geq 3$ and  $\binom{m+k-1}{k+1} + k+1 \geq g$.
\item[$(2)$] If $L$ has no base divisor, $|mL| $ is hyperbolic if $m\geq 2$ and $\binom{m+k}{k+1} + k+1 \geq g$.
\end{enumerate}
\end{corollary}

To clarify the meaning of this result, let us state an immediate corollary that is an extension of the following remark.

\begin{remark}\em{
If $L$ is an ample and base-point free line bundle on a simple abelian variety $X$ of dimension $g\geq 3$, then the linear system $|L|$ is hyperbolic. Indeed, as $X$ is simple, it does not contain any abelian proper subvarieties, therefore Remark \ref{Bloch} implies that $|L|$ is hyperbolic.}
\end{remark}

\begin{corollary}\label{cor_g-k}
Let $(X,L)$ be a polarized abelian variety of dimension $g\geq 3$ and $0 \leq k \leq g-1$. 
Assume that $X$ contains no positive-dimensional abelian subvarieties of dimension at most $k$.
Then 
\begin{enumerate}
\item[$(1)$] The linear system $|mL| $ is hyperbolic  if $mL$ is base-point free and $m \geq g-k$.
\item[$(2)$] If $L$ has no base divisor, $|mL| $ is hyperbolic if $mL$ is base-point free and $m \geq \max\left\{1,g-k-1\right\}$.
\end{enumerate}
\end{corollary}

\begin{remark}
\em{Since $\binom{m+k-1}{k+1} \geq \frac{(m-1)^{k+1}}{(k+1)!}$,
the condition in Corollary \ref{cor_KH_mL}(1) is satisfied if $$ m \geq ((k+1)!(g-k-1))^{\frac1{k+1}} +1.$$
Similarly, the condition in (2) is satisfied if $$ m \geq  ((k+1)!(g-k-1))^{\frac1{k+1}}.$$ 
Hence Corollary \ref{cor_KH_mL} is much stronger than Corollary \ref{cor_g-k} if $g \gg k \geq 1$.
For example, for $k=1$, the condition in Corollary \ref{cor_KH_mL}(1) is $ m \geq \frac{1+\sqrt{8g-15}}{2}$, which is weaker than $m \geq g-1$ if $g \geq 5$.}
\end{remark}

It is natural to wonder how optimal the above results are for abelian varieties. In fact, recall that Conjecture \ref{conj} is motivated by Fujita-type conjectures, and for polarized abelian varieties $(X,L)$, we have base-point freeness (resp.\ the very ampleness) of $|mL|$ as soon as $m\geq 2$ (resp.\ $m \geq 3$) by Lefschetz theorem. 
Also observe the following:

In view of the above discussions, for polarized abelian varieties $(X,L)$, it is quite tempting to wonder if there is a constant lower bound on $m$ for which the linear system $|mL|$ will satisfy hyperbolicity in {\it every} dimension.  
However, this expectation turns out to be false:

\begin{proposition}\label{intro-prop1-Ab}
Let $(X',L')$ be a polarized abelian variety of dimension $g- 1\geq 2$ and let $(E,\Theta)$ be a principally polarized elliptic curve. If
\begin{equation}\label{str}
    (X,L)\cong (X',L')\times (E,\Theta).
\end{equation}
then the linear system $|(g-1){L}|$ is not hyperbolic.
\end{proposition}

Note that Lazarsfeld's conjecture, proven by Pareschi \cite{Par} shows us that $(g-1)L$ is projectively normal if $g \geq 4$, and satisfies $(N_p)$ if $g \geq p+4$ (see e.g., {\it loc. cit.} for the definitions).
Hence $(N_p)$ does not imply algebraic hyperbolicity even if $p$ is sufficiently large unless we fix the dimension. 

\smallskip

After the first version of this article was posted, Caucci discovered that if $(X,L)$ is a polarized abelian variety of dimension $g\geq 3$, then $|(g-1)L|$ is hyperbolic unless $(X,L)$ is as in \eqref{str} \cite[Theorem A]{caucci_KH}. His proof combines results of \cite{Pa24,AP} with Theorem \ref{thm:hyperbolicity-of-N+L} below. We give an alternative proof of this fact in Theorem \ref{caa}.


\smallskip

Theorem \ref{mainmod} also gives us 
corollaries about hyperbolicity 
on {\it general} polarized abelian varieties:

\begin{corollary}\label{cor-ab-2}
Let $(X,{L})$ be a general polarized abelian variety of dimension $g\geq 3$ and of type $(d_1,\dots,d_g)$. 
If 
\[
d_1\cdots d_g > \frac{4^g(g-1)^gg^g}{2g!} 
\]
then $|{L}|$ is a hyperbolic linear system.
\end{corollary}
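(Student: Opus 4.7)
The plan is to derive this as a direct application of Theorem \ref{main2}. It suffices to show that under the numerical hypothesis, the $\mathbb{Q}$-twisted ideal sheaf $\mathcal{I}_o\langle \frac{1}{g-1}L\rangle$ of the origin is M-regular on a general polarized abelian variety $(X,L)$ of type $(d_1,\dots,d_g)$.

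First I would convert this M-regularity question into a bound on the Jiang-Pareschi basepoint-freeness threshold $\beta(L)$. By the very definition of $\beta(L)$, the sheaf $\mathcal{I}_o\langle tL\rangle$ is GV for every rational $t>\beta(L)$; for a \emph{general} polarized pair this GV condition upgrades to M-regularity by standard openness/genericity considerations. Therefore it suffices to establish the strict inequality $\beta(L)<\frac{1}{g-1}$.

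The second step is to invoke an effective upper bound on $\beta(L)$ for generic polarized abelian varieties. In the generality at hand, one has a bound of the shape
\[
\beta(L) \leq \frac{4g}{(2\,g!\,d_1\cdots d_g)^{1/g}},
\]
obtained by combining lower bounds for Seshadri constants on general polarized abelian varieties (Nakamaye, Lazarsfeld, Debarre) with the comparison between $\beta(L)$ and Seshadri-type invariants worked out by Jiang-Pareschi and Caucci. A direct $g$-th root manipulation shows that the hypothesis $d_1\cdots d_g>\frac{4^g(g-1)^g g^g}{2\,g!}$ is equivalent to $\frac{4g}{(2\,g!\,d_1\cdots d_g)^{1/g}}<\frac{1}{g-1}$, and hence $\beta(L)<\frac{1}{g-1}$. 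Applying Theorem \ref{main2} then gives the hyperbolicity of $|L|$.

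The main obstacle is pinning down the precise effective bound on $\beta(L)$ above, with the exact constant $4g/(2\,g!)^{1/g}$ for \emph{general} (not merely very general) pairs of arbitrary polarization type $(d_1,\dots,d_g)$. Various natural Seshadri-type bounds in the literature differ by factors of $2$, $g!$, or $(g!)^{1/g}$, and any slack there would degrade the numerical threshold in the statement; so the bookkeeping in the passage from Seshadri constants to $\beta(L)$, and in the genericity argument that rules out the worst Seshadri degenerations, is where essentially all the technical work lies.
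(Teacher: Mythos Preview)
Your approach is essentially the same as the paper's: reduce to $\beta(L)<\frac{1}{g-1}$ via Seshadri-type bounds, then invoke Theorem~\ref{main2}. Two points deserve correction.

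First, your detour through a ``GV upgrades to M-regular by genericity'' argument is both unnecessary and misstated. The paper records (just above Lemma~\ref{gvn}) that $\beta(L)<x$ is \emph{equivalent} to $\mathcal{I}_o\langle xL\rangle$ being IT$_0$, which already implies M-regular. So once $\beta(L)<\frac{1}{g-1}$, you are done; no openness argument is needed at this step.

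Second, and more seriously, the Seshadri bound you need is Bauer's \cite[Theorem~1.(b)]{Bau98}, which gives (in effect) $\epsilon(X,L)\geq \tfrac{1}{4}(2\,g!\,d_1\cdots d_g)^{1/g}$ for a \emph{very general} $(X,L)$. Combined with $r(L)\leq g/\epsilon(X,L)$ \cite[Lemma~1.2]{LPP} and $\beta(L)\leq r(L)$ \cite[Proposition~1.4]{Cau}, this yields exactly your displayed bound, hence $\beta(L)<\frac{1}{g-1}$ for very general $(X,L)$. The passage from \emph{very general} to \emph{general} is not automatic from ``standard openness'' of M-regularity on the moduli; the paper invokes \cite[Theorem~3.1]{Ito21}, which shows that the condition $\beta(L)<\frac{1}{g-1}$ itself is open (it holds for general $(X,L)$ once it holds for one, hence for very general, $(X,L)$). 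You should make this step explicit rather than fold it into a vague genericity remark.
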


We note that 
the above result has been refined by Caucci using Remark \ref{rmk1} and some recent literature \cite[Proposition E and Proposition 5.5]{caucci_KH}.

We have a better bound when $d_1=d_2=\cdots=d_{g-1}=1$:

\begin{corollary}\label{cor-ab-3}
Let $(X,{L})$ be a general polarized abelian variety of dimension $g\geq 3$ and of type $(1,1,\dots,1,d)$ with 
\[
d\geq (g-1)^g + \dots + (g-1)+1.
\]
Then $|{L}|$ is a hyperbolic linear system.
\end{corollary}



We now turn to the case of projective homogeneous spaces that are not rational homogeneous or abelian. 

\begin{theorem}\label{thm3'}
Let $(X,L)$ be a polarized projective homogeneous space of dimension $n\geq 3$ and irregularity $1\leq q(X)\leq n-1$. Then the following statements hold:
\begin{itemize}
    \item[$(1)$] The linear system $|K_X+mL|$ is hyperbolic if $m\geq n+2\tau(L)-1$.
    \item[$(2)$] If $L$ is base-point free, then $|K_X+mL|$ is hyperbolic if $m> \max\left\{n-q(X),n+2\tau(L)-2\right\}$. 
\end{itemize}
\end{theorem}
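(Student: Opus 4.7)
The plan is to use the classical Borel--Remmert structure theorem for projective homogeneous varieties, which (possibly after a finite \'etale cover) identifies $X$ with a product $F\times A$, where $A=\mathrm{Alb}(X)$ is an abelian variety of dimension $q(X)$ and $F$ is a rational homogeneous space of dimension $n-q(X)$; the Albanese morphism $\mathrm{alb}_X$ becomes the projection to $A$. Two immediate consequences I would exploit are: (a) $\mathcal{T}_X$ is globally generated (a fortiori nef), and (b) the restriction $L|_{F_p}$ to any Albanese fiber $F_p\cong F$ is ample on a rational homogeneous space, hence very ample by Borel--Weil--Bott. In particular condition (i) of Theorem~\ref{main4} holds automatically, and $K_X|_{F_p}=K_{F_p}$ since $K_A=0$.

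For part (2), since $L$ is ample and base-point free, I would verify condition (ii) of Theorem~\ref{main4}: for any ample line bundle $L_A$ on $A$, ampleness of $L$ on $X$ gives that $mL-\beta(L_A)\mathrm{alb}_X^*(L_A)$ is ample for $m$ sufficiently large (one can even take $m>\beta(L_A)/\varepsilon$ where $L-\varepsilon\,\mathrm{alb}_X^*L_A$ is ample). The crucial refinement over the general threshold $m>n$ is that the fiber-wise hyperbolicity input used in the proof of Theorem~\ref{main4} (Ein-type arguments applied along $F_p$) involves only the fiber dimension $n-q(X)$ rather than $\dim X$. Combined with the tangent-bundle bound $m>n+2\tau(L)-2$, this yields the sharper threshold $m>\max\{n-q(X),n+2\tau(L)-2\}$.

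For part (1), where $L$ is merely ample, Theorem~\ref{main4} does not apply verbatim because global generation fails. I would adapt its proof by exploiting that the restriction $L|_{F_p}$ on the rational homogeneous factor is already very ample, which is a valid substitute for the global generation of $L$ at the fiber-wise step. The absence of base-point freeness on $X$ is absorbed by enlarging $m$ by one unit (to clear the contribution of any base divisor of $|L|$ in the degree-genus inequality), producing the slightly weaker threshold $m\geq n+2\tau(L)-1$.

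The main obstacle will be to make precise the improvement from $n$ to $n-q(X)$: one must show that curves $C$ in a very general section of $|K_X+mL|$ split naturally into those contracted by $\mathrm{alb}_X$ (which sit inside a single rational homogeneous $F_p$ and are controlled by the Ein-type bound on $F_p$ using $L|_{F_p}$ very ample together with $K_{F_p}$ anti-ample), and horizontal curves, which dominate $A$ and are controlled by the triviality of $\mathcal{T}_A$ together with the positivity of $L$ in the Albanese direction coming from condition (ii). Assembling these two cases into a single degree-genus inequality, using nefness of $\mathcal{T}_X$ to produce the $2\tau(L)-2$ term, is where the bookkeeping becomes delicate.
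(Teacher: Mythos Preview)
Your proposal has genuine gaps, and the route you sketch diverges from the paper's in a way that does not obviously succeed.

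First, the paper does \emph{not} go through Theorem~\ref{main4}. It applies the fundamental criterion Theorem~\ref{thm:hyperbolicity-of-N+L} directly, using the full product structure $X\cong A\times Y$ from Borel--Remmert together with the splitting $L\cong L_1\boxtimes L_2$ (which you do not mention) and Proposition~\ref{prop2'}. The latter reduces nefness of $M_{K_X+mL}\langle\delta L\rangle$ to nefness of $M_{mL_1}\langle\delta L_1\rangle$ on $A$ and of $M_{K_Y+mL_2}\langle\delta L_2\rangle$ on $Y$ separately. For part~(2) one takes $\delta=1$: the abelian factor is handled by Corollary~\ref{ab-g+1} (needing $m\geq\dim A=q(X)$) and the rational homogeneous factor by Theorem~\ref{thm1} (needing $m\geq\dim Y+1=n-q(X)+1$). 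This is precisely where the improved threshold $n-q(X)$ comes from.

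Your attempt to extract this improvement from the proof of Theorem~\ref{main4} fails: that proof rests on Corollary~\ref{betacor}, which in turn invokes condition~(ii) of Theorem~\ref{theorem_nefness}, namely that $H-nL$ be nef and big with $n=\dim X$. This condition uses the full ambient dimension, not the fiber dimension, so you cannot replace $n$ by $n-q(X)$ there. The fiber-dimension input you have in mind is condition~(v), but it is strictly weaker and not the binding constraint.

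Your alternative of splitting curves into vertical and horizontal ones and assembling two separate degree--genus inequalities is not what the paper does, and you give no mechanism for the horizontal case beyond ``triviality of $\mathcal{T}_A$''; the paper's syzygy-bundle approach treats all curves uniformly and avoids this case analysis entirely.

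Finally, for part~(1) the paper's argument is not the vague ``enlarge $m$ by one unit to clear a base divisor'' that you propose. Since $L_1$ need not be base-point free, one must use Remark~\ref{ab-g+1'}(2) in place of Corollary~\ref{ab-g+1}, which forces $\delta\geq\tfrac{m}{m-1}$ rather than $\delta=1$. Combining with the ampleness requirement $\delta<\tfrac{m-2\tau(L)}{n-2}$ yields the quadratic inequality $m^2-m(n+2\tau(L)-1)+2\tau(L)>0$, whose solution gives exactly $m\geq n+2\tau(L)-1$.
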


\begin{remark}
\em{Our results completely prove Conjecture \ref{conj} for any polarized projective homogeneous space $(X,L)$ of dimension $n\geq 3$ (with much better bounds unless $(X,L)\cong(\mathbb{P}^n,\mathcal{O}_{\mathbb{P}^n}(1))$).

Indeed, as mentioned before, these varieties have nef (in fact globally generated) tangent bundle. If $q(X)=0$, then they are rational homogeneous spaces, in which case any ample line bundle is very ample, whence the conclusion follows from Theorem \ref{main1}. If $q(X)=n$ (resp. $1\leq q(X)\leq n-1$), then the conclusion follows from Corollary \ref{cor_g-k} (resp. Theorem \ref{thm3'}).}
\end{remark}

In the case of finite {\'etale} quotients of abelian varieties, using the results of \cite{Cau25} we obtain:

\begin{theorem}\label{hyp}
Let $X=A/G$ be a hyperelliptic variety obtain from a free action of a commutative group $G$ on an abelian variety $A$. Assume $n:=\dim X\geq 3$. Let $L$ be an ample line bundle and  $N\equiv 0$ be a numerically trivial line bundle on $X$. Then
\begin{itemize}
    \item[$(1)$] The linear system $|{N}+mL|$ is hyperbolic for $m\geq 2n+1$.
    \item[$(2)$] If $q(X)>0$, then $|{N}+mL|$ is hyperbolic for $m\geq 2n$.
\end{itemize}
\end{theorem}

As explained in \cite{Cau25}, there are many varieties that satisfy the assumptions above, and Bagnera-de Franchis varieties are one such. We include an immediate consequence for them in Corollary \ref{BdF}.

\smallskip

The key technical ingredient in our proofs is a general criterion of (pseudo-, almost) hyperbolicity of a base-point free linear system when the tangent sheaf of the variety is suitably positive (see Theorem \ref{thm:hyperbolicity-of-N+L} and Theorem \ref{ahvariant}). These results allow us to study hyperbolicity by means of the positivity properties of $\mathbb{Q}$-twisted syzygy bundles. Because of this, in case of irregular varieties, we can make effective use of the regularity properties of sheaves on abelian varieties. Moreover, our techniques also yield results for varieties with Gorenstein canonical singularities. To highlight this, we include our result for Kummer varieties, which are by definition quotients of abelian varieties by the inverse involution $a\mapsto -a$:

\begin{theorem}\label{thm-kummer}
    Let $(K(A),L)$ be a polarized Kummer variety associated to an abelian variety $A$ of dimension $n\geq 3$. Then the linear system $|mL|$ is hyperbolic for $m\geq \frac{n}{2}$.
\end{theorem}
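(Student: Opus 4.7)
The plan is to apply the general hyperbolicity criterion developed earlier in the paper (Theorem~\ref{thm:hyperbolicity-of-N+L}) in its Gorenstein canonical incarnation directly to the Kummer variety $K(A)$, and then to verify the required $\mathbb{Q}$-twisted syzygy positivity by pulling back to $A$ and invoking the M-regularity machinery used in the proof of Corollary~\ref{cor-ab-1}.

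First I would set up the geometry. The quotient $\pi\colon A\to K(A)$ has degree $2$ and ramifies only over the $2^{2n}$ two-torsion points of $A$; since $n\geq 3$, the ramification locus has codimension $\geq 3$, so $\pi$ is étale in codimension two. Consequently $\pi^*K_{K(A)}\cong K_A\cong\mathcal{O}_A$, so $K_{K(A)}$ is trivial and $K(A)$ has at worst Gorenstein canonical singularities. Moreover the reflexive tangent sheaf $\mathcal{T}_{K(A)}$ pulls back to the trivial bundle $\mathcal{T}_A\cong\mathcal{O}_A^{\oplus n}$, so $\mathcal{T}_{K(A)}$ satisfies the positivity hypothesis required by the singular version of our criterion. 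Writing $M:=\pi^*L$, an ample line bundle on $A$, we obtain a natural identification $H^0(K(A),mL)\simeq H^0(A,mM)^+$ with the $(+1)$-eigenspace for the $\{\pm 1\}$-action.

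Next I would apply the criterion: to prove that $|mL|$ is hyperbolic on $K(A)$ it suffices to check a suitable positivity of the $\mathbb{Q}$-twisted syzygy bundle attached to $mL$, exactly as in the proof of Theorem~\ref{main2}. Pulling the defining sequence
\[
0\to M_{mL}\to H^0(K(A),mL)\otimes\mathcal{O}_{K(A)}\to mL\to 0
\]
back by $\pi$ and using the splitting $H^0(A,mM)=H^0(mL)\oplus H^0(A,mM)^-$ into $\{\pm 1\}$-eigenspaces, one realizes $\pi^*M_{mL}$ as a direct summand of the syzygy bundle of $mM$ on $A$. The positivity condition on $K(A)$ thus reduces to an M-regularity condition on $A$, namely M-regularity of $\mathcal{I}_o\langle\tfrac{2m}{n-1}M\rangle$; the factor $2$ is exactly $\deg\pi$ and enters because each $\{\pm 1\}$-invariant section of $mM$ represents a single section of $mL$. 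As in the proof of Corollary~\ref{cor-ab-1}(1), this M-regularity is satisfied once $2m\geq n$, that is, $m\geq n/2$.

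The main obstacle is making the factor-of-two improvement rigorous: one must verify that the criterion of Theorem~\ref{thm:hyperbolicity-of-N+L} extends cleanly to the Gorenstein canonical setting of $K(A)$, and carefully track how $\deg\pi=2$ enters the $\mathbb{Q}$-twist via the projection formula and the Pareschi--Popa--Jiang framework on $A$. Because $\pi$ is étale in codimension two and the Kummer singularities are mild, I expect that both the criterion and the descent of algebraic hyperbolicity along the finite cover transfer without loss; combining this with Theorem~\ref{Bloch} applied to the symmetric preimages $\pi^{-1}(D)\subset A$ should then yield the sharp bound $m\geq n/2$.
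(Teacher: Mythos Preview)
Your overall architecture matches the paper's: apply Theorem~\ref{thm:hyperbolicity-of-N+L} directly to the Gorenstein canonical variety $K(A)$ with $\mathcal{E}=mL$, use that $K_{K(A)}\equiv 0$, and reduce condition~(i) to a $\mathbb{Q}$-twisted positivity statement on $A$ via the pullback $\pi\colon A\to K(A)$. The paper handles the tangent-sheaf condition just as you suggest, via Proposition~\ref{kummer} together with the observation that a general $Y\in|mL|$ misses the (isolated) singular locus. The reference to Theorem~\ref{Bloch} at the end is unnecessary.

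The genuine gap is your justification of the factor-of-two improvement. Realizing $\pi^*M_{mL}$ as a direct summand of $M_{m\pi^*L}$ only lets you \emph{import} positivity from $M_{m\pi^*L}$; it cannot give $\pi^*M_{mL}$ better positivity than the ambient bundle. Concretely, Remark~\ref{ab-g+1'}(2) (or Proposition~\ref{prop_JP_8.1}) yields nefness of $M_{m\pi^*L}\langle\delta\,\pi^*L\rangle$ only for $\delta$ roughly $\geq \tfrac{m}{m-1}$; combined with condition~(ii), namely $\delta<\tfrac{m}{n-2}$, this forces $m\geq n$, not $m\geq n/2$. Your sentence ``the factor $2$ is exactly $\deg\pi$ and enters because each $\{\pm1\}$-invariant section of $mM$ represents a single section of $mL$'' does not supply a mechanism: the count of sections is irrelevant to the $\mathbb{Q}$-twist needed for nefness, and the summand inclusion goes the wrong way to halve $\delta$.

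In the paper the halving comes from a result proved elsewhere and quoted as Lemma~\ref{pos-kummer}: for $m\geq 2$ the sheaf $\pi^*M_{mL}\langle\tfrac{m}{2(m-1)}\pi^*L\rangle$ is IT$_0$ on $A$. This is a statement specifically about the Kummer syzygy bundle, not merely about $M_{m\pi^*L}$, and it is what produces $\delta=\tfrac{m}{2(m-1)}-\varepsilon$; plugging this into condition~(ii) gives exactly $m\geq n/2$. If you want your route to succeed you would need to supply an independent proof of this lemma (or an equivalent input, e.g.\ that $\pi^*L$ is numerically $2$-divisible on $A$ so that $\beta(\pi^*L)\leq\tfrac12$); the direct-summand observation alone does not do it.
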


Our techniques can be used to prove hyperbolicity results for various other classes of varieties. We work over the field $\mathbb{C}$ of complex numbers. A {\it variety} is an integral separated scheme of finite type over $\mathbb{C}$.

\smallskip

\noindent{\bf Organization.} We provide the basic definitions of hyperbolicity conditions and positivity properties in Section \ref{prelim}. 
Section \ref{positivity} is devoted to the study of positivity properties of twisted syzygy bundles. We explain how the positivity of syzygy bundles is related to hyperbolicity in Section \ref{transition}. We prove our main results in Section \ref{proofs}.

\smallskip

\noindent{\bf Acknowledgments.} We thank Federico Caucci, Mihnea Popa, Burt Totaro, and Katsutoshi Yamanoi for helpful communications. We also thank Haesong Seo and Guolei Zhong for pointing out an inaccuracy in the first version of this manuscript.

\section{Preliminaries on hyperbolicity and positivity}\label{prelim}
 
In this section, we recall some preliminary definitions regarding algebraic hyperbolicity, positivity properties of sheaves, and their $\mathbb{Q}$-twisted variants.

\subsection{Algebraic hyperbolicity} We start with the definition of (Demailly) algebraic hyperbolicity.

\begin{definition}\label{def:alg-hyp}
{\em 
We say that a projective variety $X$ is \emph{algebraically hyperbolic} if there is some $\varepsilon>0$ and ample line bundle $L$ such that every non-constant map $f: C \to X$ from a smooth projective curve $C$ of genus $g(C)$ satisfies the inequality
\begin{equation}
\label{eqn:hyperbolicity-definition}
2g(C)-2\geq \varepsilon \deg f^*L.
\end{equation}
}
\end{definition}

We remark that the algebraic hyperbolicity condition does not dependent on the choice of polarization, i.e., it holds true for $L$ if and only if it holds true for any ample line bundle. 
Indeed, if $H$ is another ample line bundle on $X$, then there exists $\delta>0$ such that $L-\delta H$ is ample. So the definition works for the pair $(H,\varepsilon\delta)$ provided that it works for $(L,\varepsilon)$.

\smallskip

It is reasonable to define hyperbolicity of the linear series associated to an ample and base-point free line bundle through that of a very general section of the series:

\begin{definition}\label{def:alg-hyp-ls}
{\em 
Let $E$ be an ample and base-point free  line bundle on a polarized projective variety $(X,L)$.
The line bundle $E$ (or the linear system $\left|E\right|$) is called: 
\begin{itemize}[leftmargin=0.8cm]
    \item \textit{Hyperbolic} if the variety $D_E$ in $X$ defined by the vanishing of a very general element of $\left|E\right|$ is algebraically hyperbolic.
    \item \textit{Pseudo-hyperbolic} if there is a proper closed subset $Z\subsetneq X$ and a constant $\varepsilon>0$ such that a very general element $D_E$ of $\left|E\right|$ is algebraically hyperbolic outside of $Z$, namely every non-constant map $f: C \to D_E$ from a smooth projective curve $C$ where $f(C)\not\subset Z$ satisfies \eqref{eqn:hyperbolicity-definition}.
    \item {\it Almost hyperbolic} if there is a countable family of subvarieties $Z_i\subsetneq X$ ($i\in\mathbb{N}$) and a constant $\varepsilon>0$ such that a very general element $D_E$ of $\left|E\right|$ is algebraically hyperbolic outside of $\bigcup_{i\in\mathbb{N}} Z_i$, i.e., every non-constant map $f: C \to D_E$ from a smooth projective curve $C$ where $f(C)\not\subset \bigcup_{i\in\mathbb{N}} Z_i$ satisfies \eqref{eqn:hyperbolicity-definition}.
\end{itemize}
}
\end{definition}

It turns out that the above definitions do not depend on the polarization $L$, i.e., one can define hyperbolicity of a line bundle and its variants without specifying any polarization. Moreover, one can also define {\it algebraic pseudo-} (resp. {\it almost}) {\it hyperbolicity} of a projective variety $X$ in a similar fashion. Note that $X$ (or a line bundle $E$ on $X$) is
\begin{center}
    hyperbolic$\implies$pseudo-hyperbolic$\implies$almost hyperbolic.
\end{center}

\begin{example}[Pseudo-hyperbolic but not hyperbolic variety] \em{Note that the converse to the first implication above does not hold. Indeed, take any smooth projective variety that is algebraically hyperbolic. If we blow-up a point, the resulting variety is algebraically pseudo-hyperbolic (outside the exceptional divisor) but not algebraically hyperbolic.}
\end{example}

However, we do not know the answer to the following 
\begin{question}
{\em Is there a smooth projective variety that is almost hyperbolic but not pseudo-hyperbolic?}
\end{question}

\subsection{Positivity of ($\mathbb{Q}$-twisted) sheaves and geometry} We recall the definitions of the positivity properties that will be useful. Given a sheaf $\mathcal{E}$ on a projective variety $X$, we denote $\mathrm{Proj}({\rm Sym}^{\bullet}(\mathcal{E}))$ by $\mathbb{P}(\mathcal{E})$.
\begin{definition}\label{def:gen-nef}
{\em We say that a coherent sheaf $E$ on $X$ is \emph{nef} (resp. \emph{ample}) if the line bundle $\mathcal{O}_{\pp(E)}(1)$ on $\pp(E)$ is nef (resp. ample). }
\end{definition}

It is known that $E$ is nef if and only if the line bundle $\mathcal{O}_{\pp(E\vert_C)}(1)$ on $\pp(E\vert_C)$ is nef for every integral curve $C\subset X$. Thus, the above definition  of nefness naturally extends to the following

\begin{definition}\label{pnef}
{\em A coherent sheaf $E$ on $X$ is 
\begin{itemize}[leftmargin=0.8cm]
    \item \emph{pseudo-nef} if there is a closed subset $Z\subsetneq X$ such that $\mathcal{O}_{\pp(E\vert_C)}(1)$ on $\pp(E\vert_C)$ is nef for every integral curve $C\not\subset Z$, 
    \item \emph{almost nef} if there is a countable family of subvarieties $Z_i\subsetneq X$ ($i\in\mathbb{N}$) such that $\mathcal{O}_{\pp(E\vert_C)}(1)$ on $\pp(E\vert_C)$ is nef for every integral curve $C\not\subset \bigcup_{i\in\mathbb{N}}Z_i$.
\end{itemize}}
\end{definition}

Clearly, for a sheaf $E$ on $X$, we have 
\begin{center}
    ample$\implies$nef$\implies$pseudo-nef$\implies$almost nef.
\end{center}

Recall that a variety $X$ is called {\it homogeneous} (resp. {\it almost homogeneous}) if ${\rm Aut}^0(X)$ acts transitively (resp. with an open orbit) on $X$. It is well-known that if a smooth projective variety is homogeneous (resp. almost homogeneous), then its tangent bundle $\mathcal{T}_X$ is nef (resp. almost nef). We also have the following structure theorem for varieties with nef tangent bundle:
\begin{theorem}[{\cite[Proposition 2.4]{CP91}}]\label{structure}
    Let $X$ be a smooth projective variety with nef tangent bundle $\mathcal{T}_X$. Then its Albanese map ${\rm alb}_X:X\to A$ is a surjective and smooth morphism whose fibers are connected with nef tangent bundles.
\end{theorem}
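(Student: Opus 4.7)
I would organize the proof into four parts: showing that the Albanese morphism $\mathrm{alb}_X : X \to A$ is (i) smooth, (ii) surjective, (iii) has connected fibers, and (iv) has fibers with nef tangent bundles.

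\emph{Smoothness and surjectivity.} The crucial input is that if $\mathcal{T}_X$ is nef, then every global holomorphic one-form $\omega \in H^0(X, \Omega_X^1)$ is nowhere vanishing. Viewing $\omega$ as a morphism $\mathcal{T}_X \to \mathcal{O}_X$, a zero at $x \in X$ would yield, after passing to a suitable test curve through $x$, a rank-one quotient of $\mathcal{T}_X$ of negative degree, contradicting nefness. Consequently, the evaluation map
\[
H^0(X, \Omega_X^1) \otimes \mathcal{O}_X \;\longrightarrow\; \Omega_X^1
\]
is fiberwise injective and realizes the left-hand side as a rank-$q(X)$ subbundle of $\Omega_X^1$. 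Dualizing, the differential of $\mathrm{alb}_X$ has constant rank $q(X)$ everywhere, so $\mathrm{alb}_X$ is a smooth morphism of relative dimension $n - q(X)$. Since the image of $\mathrm{alb}_X$ generates $A$ as a group and already has the maximal possible dimension $q(X) = \dim A$, it must equal $A$.

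\emph{Connectedness.} Let $X \xrightarrow{f} Y \xrightarrow{g} A$ be the Stein factorization, so $f$ has connected fibers and $g$ is finite. Smoothness of $g \circ f = \mathrm{alb}_X$ forces $g$ to be \'etale, and hence $Y$ inherits the structure of an abelian variety. The universal property of the Albanese, applied to the morphism $f: X \to Y$, produces a factorization through $\mathrm{alb}_X$ which, combined with $g \circ f = \mathrm{alb}_X$, forces $g$ to be an isomorphism. Therefore the fibers of $\mathrm{alb}_X$ are connected.

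\emph{Nef tangent bundles on fibers.} Since $\mathcal{T}_A$ is trivial, the relative tangent sequence reads
\[
0 \to \mathcal{T}_{X/A} \to \mathcal{T}_X \to \mathcal{O}_X^{\oplus q(X)} \to 0,
\]
and restricting to a fiber $F$ gives $0 \to \mathcal{T}_F \to \mathcal{T}_X|_F \to \mathcal{O}_F^{\oplus q(X)} \to 0$. The middle term is nef, but since subbundles of nef bundles are not nef in general, one cannot conclude directly. I would close this gap by testing against an arbitrary curve $C \subset F$: the Harder-Narasimhan filtration of $\mathcal{T}_X|_C$ has only non-negative slopes by nefness, and the surjection onto the trivial quotient $\mathcal{O}_C^{\oplus q(X)}$ must absorb precisely the zero-slope part, leaving $\mathcal{T}_F|_C$ with only non-negative slopes and hence nef. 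This last step is where I expect the main difficulty, because it is exactly the point where one must exploit the rigidity coming from the triviality of $\mathcal{T}_A$ rather than abstract properties of exact sequences of vector bundles.
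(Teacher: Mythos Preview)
The paper does not prove this statement at all: Theorem~\ref{structure} is quoted verbatim from Campana--Peternell \cite[Proposition 2.4]{CP91} and used as a black box, with no argument supplied. So there is no ``paper's own proof'' to compare against.

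Your sketch is a reasonable outline of how the result is actually proved in the literature, and parts (i)--(iii) are essentially correct. Let me comment only on part (iv), where your phrasing is imprecise. The claim that the surjection onto $\mathcal{O}_C^{\oplus q}$ ``must absorb precisely the zero-slope part'' of the Harder--Narasimhan filtration is not literally true and is not what you need. The clean argument is more direct: given any quotient $\mathcal{T}_F|_C \twoheadrightarrow Q$, let $K$ be its kernel; then $K \subset \mathcal{T}_F|_C \subset \mathcal{T}_X|_C$, and $\mathcal{T}_X|_C / K$ sits in an exact sequence
\[
0 \to Q \to \mathcal{T}_X|_C / K \to \mathcal{O}_C^{\oplus q} \to 0,
\]
so $\deg(\mathcal{T}_X|_C / K) = \deg Q$. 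Nefness of $\mathcal{T}_X|_C$ forces $\deg(\mathcal{T}_X|_C / K) \geq 0$, hence $\deg Q \geq 0$, and $\mathcal{T}_F|_C$ is nef. No Harder--Narasimhan analysis is needed; what matters is only that the cokernel $\mathcal{O}_C^{\oplus q}$ has degree zero.
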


We also note the following where for a singular variety, we define its tangent sheaf as the reflexive hull of the push-forward of the tangent sheaf from its smooth locus. The following will allow us to prove a hyperbolicity statement on Kummer varieties.

\begin{proposition}\label{kummer}
Let $X$ be a Kummer variety. 
Let $C$ be a curve in $X$ that does not contain a singular point of $X$. Let $\mathcal{T}_X\rightarrow Q\rightarrow 0$ be a quotient coherent sheaf. Then, we have $\deg(Q|_C)\geq 0$. 
\end{proposition}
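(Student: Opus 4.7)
The plan is to reduce the claimed inequality on the (possibly singular) Kummer variety $X$ to an obvious nefness statement on the smooth double cover $A$. Let $\pi \colon A \to X = A/\langle \iota \rangle$ denote the Kummer quotient; it is étale outside the $2$-torsion locus $A[2]$, and its image of $A[2]$ is precisely the singular locus of $X$. Since $C$ avoids the singularities of $X$, it is contained in the smooth locus $X^{\mathrm{sm}}$, over which $\pi^{*}\mathcal{T}_{X}\cong \mathcal{T}_{A}$. The crucial point is that $\mathcal{T}_{A}$ is the \emph{trivial} bundle of rank $n$, because $A$ is an abelian variety.

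The first step I would carry out is to pass to a smooth curve by taking the normalization $\nu \colon \bar{C}\to C\hookrightarrow X^{\mathrm{sm}}$. Next I would form the fibre product $\bar{C}' := \bar{C}\times_{X} A$ with its two projections $\psi\colon \bar{C}'\to \bar{C}$ and $\phi\colon \bar{C}'\to A$. Because $\pi$ is étale over $X^{\mathrm{sm}}$, the morphism $\psi$ is finite étale of degree $2$; in particular $\bar{C}'$ is a smooth projective curve and $\deg\psi>0$.

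With this setup in hand, I would pull back the given quotient $\mathcal{T}_{X}\twoheadrightarrow Q$ twice: first along $\nu$ to get $\nu^{*}\mathcal{T}_{X}\twoheadrightarrow \nu^{*}Q$ on $\bar{C}$, then along $\psi$ to obtain
\[
\psi^{*}\nu^{*}\mathcal{T}_{X} \;=\; \phi^{*}\pi^{*}\mathcal{T}_{X} \;=\; \phi^{*}\mathcal{T}_{A} \;\cong\; \mathcal{O}_{\bar{C}'}^{\oplus n} \;\twoheadrightarrow\; \psi^{*}\nu^{*}Q.
\]
Hence $\psi^{*}\nu^{*}Q$ is a coherent quotient of a trivial sheaf on a smooth projective curve. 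Decomposing it as the sum of its torsion subsheaf and a locally free quotient, the torsion part contributes non-negatively to the degree, while the locally free part is globally generated and therefore nef; so $\deg(\psi^{*}\nu^{*}Q)\geq 0$. Dividing by $\deg\psi>0$ yields $\deg(\nu^{*}Q)\geq 0$, which is the desired statement $\deg(Q|_{C})\geq 0$.

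The argument has no real obstacle, but the one place that demands care is the interface between the (possibly) singular curve $C$ and the reflexive tangent sheaf $\mathcal{T}_{X}$: restricting a quotient of a non-locally-free sheaf to a curve can behave badly, and "degree" for a coherent sheaf on a singular curve must be interpreted consistently. Both issues are dissolved by working on $\bar{C}$ from the start, where the tangent sheaf of $X$ pulled back along $\nu$ is locally free (since $\nu(\bar{C})\subset X^{\mathrm{sm}}$) and the torsion/locally-free decomposition on the smooth cover $\bar{C}'$ handles degrees unambiguously.
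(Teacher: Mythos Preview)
Your argument is correct and follows essentially the same route as the paper: both reduce to the \'etale double cover $A\to X$ over the smooth locus, use that $\pi^{*}\mathcal{T}_{X^{\mathrm{sm}}}\cong\mathcal{T}_A$ is trivial, and conclude that $\mathcal{T}_X|_C$ (hence its quotient $Q|_C$) is nef. The paper phrases this as ``$\mathcal{T}_X|_C$ is a nef vector bundle, so $Q|_C$ is nef, so $\deg(Q|_C)\ge 0$'' without passing to the normalization; your extra care with $\bar C$ and the torsion/locally-free decomposition is a harmless refinement of the same idea.
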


\begin{proof}
Let $A^0$ be the locus of the abelian variety where the involution is acting freely. Let $f\colon A^0\rightarrow X^0$ be the corresponding finite \'etale morphism. Note that $C\subset X^0$. We have 
$f^*\mathcal{T}_{X^0}= \mathcal{T}_{A^0}$. Thus, we conclude that $\mathcal{O}(1)$ is nef on $\mathbb{P}(\mathcal{T}_{X^0}|_C)$ and so $\mathcal{T}_X|_C$ is a nef vector bundle. 
Therefore, $Q|_C$ is a nef sheaf as well, so 
$\deg(Q|_C)\geq 0$. 
\end{proof}

We will frequently use the theory of $\mathbb{Q}$-twisted sheaves, and their positivity properties. 
Let $ L'$ be an arbitrary line bundle on $X$. Following \cite[Section 6.2A]{Laz'}, given a coherent sheaf $\mathcal{F}$ and a rational number $x$, define the $\mathbb{Q}$-twisted sheaf
$\mathcal{F}\langle x L'\rangle$ as follows: it is the equivalence class of pairs $(\mathcal{F},x L')$ where the equivalence is given by $$(\mathcal{F}\otimes  L'^{\otimes m},y L')\sim (\mathcal{F},(y+m) L').$$
The positivity properties of a $\mathbb{Q}$-twisted sheaf $\mathcal{F}\langle x L'\rangle$ can be defined through that of the $\mathbb{Q}$-twisted bundle  $\mathcal{O}_{\mathbb{P}(\mathcal{F})}(1)\langle x\cdot\pi^* L'\rangle$ as explained in {\it loc. cit.} where $\pi:\mathbb{P}(\mathcal{F})\to X$ is the structure map.

\subsection{Jet-separation and jet-ampleness} Finally, let us recall the definition of {\it jet-separation}. A line bundle $L$ on a projective variety $X$ is said to {\it separate $p$-jets at $x\in X$} if the map $$H^0(L)\to H^0(L\otimes\mathcal{O}_X/\mathcal{I}_x^{p+1})$$ is surjective, where $\mathcal{I}_x\subset\mathcal{O}_X$ is the ideal sheaf of $x\in X$. A related notion is jet-ampleness:  $L$ is said to be {\it $p$-jet ample} if 
$$H^0(L)\to H^0(L\otimes\mathcal{O}_X/\mathcal{I}_{x_1}\mathcal{I}_{x_2} \cdots \mathcal{I}_{x_{p+1}})$$ is surjective for any (not necessarily distinct) $x_1,\dots x_{p+1} \in X$. In particular, $L$ is 
\begin{itemize}
    \item $0$-jet ample if and only if $L$ is base-point free,
    \item $1$-jet ample if and only if $L$ is very ample.
\end{itemize}
Hence $p$-jet ampleness is stronger than the condition that $L$ separates $p$-jets at any $x\in X$ for $p \geq 1$. 

We remark that jet-ampleness of linear series on abelian varieties is related to M-regularity, see Theorem \ref{It} for details.

\section{Positivity of syzygy bundles}\label{positivity}

This section is devoted to the study of positivity properties, such as global generation, or more generally nefness of twisted syzygy bundles whose definition we first recall. 

Let $X$ be a smooth projective variety and let $ L$ be a globally generated line bundle on $X$. The {\it syzygy bundle} $M_{ L}$ of $ L$ is by definition  $$M_{ L}:=\mathrm{Ker}(H^0( L)\otimes \mathcal{O}_X\xrightarrow{\textrm{ev}}  L)$$ 
where the map above is the evaluation map of global sections of $ L$. 

\subsection{Preliminaries on syzygy bundles}
We start by providing  some preliminaries on syzygy bundles. The following proposition leads to a corollary which will be used throughout:
\begin{lemma}\label{l1}
    Let $X$ be a smooth projective variety. Let $ L_1$ and $ L_2$ be two base-point free line bundles on $X$. 
    Assume the following: 
    \begin{enumerate}
    \renewcommand{\labelenumi}{\textup{(\roman{enumi})}}
        \item $M_{ L_2}\otimes  L_1$ is globally generated, and 
        \item $H^0( L_1)\otimes H^0( L_2)\to H^0( L_1+ L_2)$ is surjective.
    \end{enumerate}
    Then $M_{ L_1+ L_2}\otimes  L_1$ is globally generated.
\end{lemma}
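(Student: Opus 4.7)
The plan is to construct a surjection $H^{0}(L_{1})\otimes M_{L_{2}}\twoheadrightarrow M_{L_{1}+L_{2}}$ out of hypothesis (ii), and then tensor with $L_{1}$ and apply hypothesis (i) to finish. This is a standard snake-lemma maneuver with the evaluation sequences.

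\textbf{Step 1: Set up the diagram.} Consider the commutative diagram of short exact sequences
\[
\begin{CD}
0 @>>> H^{0}(L_{1})\otimes M_{L_{2}} @>>> H^{0}(L_{1})\otimes H^{0}(L_{2})\otimes\mathcal{O}_{X} @>{\mathrm{id}\otimes\mathrm{ev}}>> H^{0}(L_{1})\otimes L_{2} @>>> 0 \\
@. @VV{\alpha}V @VV{\beta}V @VV{\varphi}V \\
0 @>>> M_{L_{1}+L_{2}} @>>> H^{0}(L_{1}+L_{2})\otimes\mathcal{O}_{X} @>{\mathrm{ev}}>> L_{1}+L_{2} @>>> 0,
\end{CD}
\]
where $\beta$ is induced by the multiplication map and $\varphi$ is the contraction $s\otimes t\mapsto s\cdot t$. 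The top row is the evaluation sequence for $L_{2}$ tensored with the vector space $H^{0}(L_{1})$; the bottom row is the evaluation sequence for $L_{1}+L_{2}$.

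\textbf{Step 2: Verify surjectivity of $\beta$ and $\varphi$.} The map $\beta$ is surjective because hypothesis (ii) says the multiplication $H^{0}(L_{1})\otimes H^{0}(L_{2})\to H^{0}(L_{1}+L_{2})$ is surjective. For $\varphi$, tensor the evaluation sequence $0\to M_{L_{1}}\to H^{0}(L_{1})\otimes\mathcal{O}_{X}\to L_{1}\to 0$ with the line bundle $L_{2}$ to see that $\varphi$ is surjective with kernel $M_{L_{1}}\otimes L_{2}$; equivalently, use that $L_{1}$ is base-point free. (Its kernel will not matter; only surjectivity does.)

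\textbf{Step 3: Apply the snake lemma.} Since $\mathrm{coker}(\beta)=0$ and $\mathrm{coker}(\varphi)=0$, the snake lemma forces $\mathrm{coker}(\alpha)=0$, so
\[
\alpha\colon H^{0}(L_{1})\otimes M_{L_{2}}\twoheadrightarrow M_{L_{1}+L_{2}}
\]
is surjective.

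\textbf{Step 4: Tensor with $L_{1}$ and conclude.} Tensoring with the line bundle $L_{1}$ preserves surjectivity, giving
\[
H^{0}(L_{1})\otimes\bigl(M_{L_{2}}\otimes L_{1}\bigr)\twoheadrightarrow M_{L_{1}+L_{2}}\otimes L_{1}.
\]
By hypothesis (i), $M_{L_{2}}\otimes L_{1}$ is globally generated, so there is a surjection from a trivial sheaf $H^{0}(M_{L_{2}}\otimes L_{1})\otimes\mathcal{O}_{X}\twoheadrightarrow M_{L_{2}}\otimes L_{1}$. Composing produces a surjection from the trivial sheaf $H^{0}(L_{1})\otimes H^{0}(M_{L_{2}}\otimes L_{1})\otimes\mathcal{O}_{X}$ onto $M_{L_{1}+L_{2}}\otimes L_{1}$, which means $M_{L_{1}+L_{2}}\otimes L_{1}$ is globally generated.

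There is no real obstacle here: the content is entirely contained in Step 3 (the snake-lemma surjection), and Steps 1--2 are set-up while Step 4 is formal. The only mildly subtle check is computing $\mathrm{coker}(\varphi)=0$, which uses that $L_{2}$ is a line bundle (flat) and that $L_{1}$ is base-point free.
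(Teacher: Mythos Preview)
Your Step 3 contains a genuine error: the snake lemma does \emph{not} say that $\mathrm{coker}(\beta)=0$ and $\mathrm{coker}(\varphi)=0$ force $\mathrm{coker}(\alpha)=0$. The snake sequence reads
\[
\cdots \to \ker\beta \to \ker\varphi \xrightarrow{\partial} \mathrm{coker}\,\alpha \to \mathrm{coker}\,\beta \to \mathrm{coker}\,\varphi \to 0,
\]
so vanishing of $\mathrm{coker}\,\beta$ only tells you that $\partial$ is surjective, i.e.\ $\mathrm{coker}\,\alpha$ is a quotient of $\ker\varphi = M_{L_1}\otimes L_2$. To get $\mathrm{coker}\,\alpha = 0$ you need the map $\ker\beta \to \ker\varphi$ to be surjective. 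Here $\ker\beta = K\otimes\mathcal{O}_X$ with $K = \ker\bigl(H^0(L_1)\otimes H^0(L_2)\to H^0(L_1+L_2)\bigr) = H^0(M_{L_1}\otimes L_2)$, and under this identification the map $\ker\beta\to\ker\varphi$ is exactly the evaluation map $H^0(M_{L_1}\otimes L_2)\otimes\mathcal{O}_X \to M_{L_1}\otimes L_2$. So your argument actually requires $M_{L_1}\otimes L_2$ to be globally generated --- the ``symmetric'' condition to (i), which is \emph{not} among the hypotheses. Your remark that the kernel of $\varphi$ ``will not matter'' is precisely where the argument breaks.

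The paper's proof sets up the diagram with the roles of $L_1$ and $L_2$ swapped in the evaluation sequences (using the sequence for $L_1$ rather than for $L_2$ in the top row, already tensored by $L_1$). Then the surjectivity needed in the snake step becomes exactly hypothesis (i), and the resulting surjection is $M_{L_1}\otimes H^0(L_2)\otimes L_1 \twoheadrightarrow M_{L_1+L_2}\otimes L_1$. The proof concludes using the automatic fact that $M_{L_1}\otimes L_1 \cong \Lambda^{r-1}M_{L_1}^{*}$ is globally generated (no hypothesis needed). Your instinct to use a snake-lemma maneuver was right, but the choice of which evaluation sequence goes on top is not symmetric under the given hypotheses.
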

\begin{proof}
    Note that the assumptions give rise to the following commutative diagram with exact rows and columns:
    \[
    \begin{tikzcd}
        & & 0\arrow[d] & 0\arrow[d] &\\
        & & H^0(M_{ L_2}\otimes  L_1)\otimes  L_1\arrow[r]\arrow[d] & M_{ L_2}\otimes (2 L_1)\arrow[d]\arrow[r] & 0\\
        0\arrow[r] & M_{ L_1}\otimes H^0( L_2)\otimes  L_1\arrow[r]\arrow[d] & H^0( L_1)\otimes H^0( L_2)\otimes  L_1\arrow[r]\arrow[d] & (2 L_1)\otimes H^0( L_2)\arrow[r]\arrow[d] & 0\\
        0\arrow[r] & M_{ L_1+ L_2}\otimes  L_1\arrow[r] & H^0( L_1+ L_2)\otimes  L_1\arrow[r]\arrow[d] & 2 L_1+ L_2\arrow[d]\arrow[r]& 0\\
        & & 0 & 0 &
    \end{tikzcd}
    \]
     Thus by snake lemma, the leftmost vertical map is also surjective. The conclusion follows since $$M_{ L_1}\otimes  L_1=\Lambda^{r-1}M_{ L_1}^*$$ is clearly globally generated where $r=\textrm{rank}(M_{ L_1})=h^0( L_1)-1$.
\end{proof}

\begin{corollary}\label{n+1}
    Let $X$ be a smooth projective variety of dimension $n$ and $L$ be an ample and base-point free line bundle on $X$. Then $M_{K_X+mL}\otimes {L}$ is globally generated for all $m\geq n+1$ if so is $M_{K_X+(n+1)L}\otimes L$.
\end{corollary}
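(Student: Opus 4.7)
The plan is to prove this by induction on $m$, with base case $m=n+1$ (which is the hypothesis) and the inductive step provided by Lemma \ref{l1} applied with $L_1 = L$ and $L_2 = K_X + mL$.

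First I would set up the induction. Suppose $M_{K_X + mL}\otimes L$ is globally generated for some $m\geq n+1$. Taking $L_1 = L$ and $L_2 = K_X + mL$ in Lemma \ref{l1}, hypothesis (i) of that lemma is exactly the inductive hypothesis. The conclusion of the lemma is that $M_{L_1+L_2}\otimes L_1 = M_{K_X + (m+1)L}\otimes L$ is globally generated, which is exactly what we need to advance the induction. So the whole proof reduces to verifying hypothesis (ii) of Lemma \ref{l1} for these choices, namely that the multiplication map
\[
H^0(L)\otimes H^0(K_X + mL)\longrightarrow H^0(K_X + (m+1)L)
\]
is surjective for all $m\geq n+1$.

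This surjectivity follows from Castelnuovo--Mumford regularity together with Kodaira vanishing. Since $L$ is ample and base-point free, a coherent sheaf $F$ is $0$-regular with respect to $L$ if $H^i(X, F\otimes L^{-i}) = 0$ for every $i>0$; Mumford's theorem then gives global generation of $F$ and the surjectivity of $H^0(F)\otimes H^0(L)\to H^0(F\otimes L)$. For $F = \mathcal{O}_X(K_X + mL)$, the required vanishings read $H^i(K_X + (m-i)L) = 0$ for $1\leq i\leq n$, and these hold by Kodaira vanishing as soon as $m - i \geq 1$ for all such $i$, i.e., as soon as $m\geq n+1$. This supplies hypothesis (ii) and closes the induction.

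The proof is essentially immediate once Lemma \ref{l1} is in hand, so there is no serious obstacle; the only point worth being careful about is checking the regularity bound, and verifying that the Kodaira vanishings used are valid in the appropriate range of $m$. No extra hypotheses on $X$ beyond smoothness are needed.
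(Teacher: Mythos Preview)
Your proof is correct and follows essentially the same approach as the paper: the paper's proof is a one-sentence appeal to Lemma \ref{l1} together with the surjectivity of $H^0(K_X+mL)\otimes H^0(L)\to H^0(K_X+(m+1)L)$ for $m\geq n+1$, justified by Castelnuovo--Mumford regularity and Kodaira vanishing, which is exactly the inductive step you spell out.
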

\begin{proof}
    The assertion follows from Lemma \ref{l1} as $$H^0(K_X+mL)\otimes H^0(L)\to H^0(K_X+(m+1)L)$$ is surjective for any $m\geq n+1$ by Castelnuovo-Mumford regularity and Kodaira vanishing.
\end{proof}

In practice, to argue global generation of twisted syzygy bundles, we will argue through an inductive procedure by slicing the variety by hyperplane sections. While doing so, the following will be crucially used:

\begin{lemma}\label{resgg}
    Let $X$ be a smooth projective variety and $Z\subset X$ be a smooth subvariety. Let $ L_1$ and $ L_2$ be two globally generated line bundles on $X$. Assume:
    \begin{enumerate}
    \renewcommand{\labelenumi}{\textup{(\roman{enumi})}}
        \item $H^0( L_1)\to H^0( L_1|_Z)$ is surjective, 
        \item $M_{ L_1|_Z}\otimes  L_2|_Z$ is globally generated, 
        and
        \item $H^0( L_1|_Z)\otimes H^0( L_2|_Z)\to H^0(( L_1+ L_2)|_Z)$ is surjective.
    \end{enumerate}
    Then $M_{ L_1}\otimes L_2\otimes\mathcal{O}_Z$ is globally generated.
\end{lemma}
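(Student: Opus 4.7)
The plan is to compare the two syzygy bundles $M_{L_1}|_Z$ and $M_{L_1|_Z}$ directly, using the restriction map on sections of $L_1$. Write $V := H^0(L_1)$ and let $K$ be the kernel of the surjection $V \twoheadrightarrow H^0(L_1|_Z)$ provided by (i). Because $L_1$ is locally free, restricting the defining sequence of $M_{L_1}$ to $Z$ remains exact, so a snake lemma comparison with the defining sequence of $M_{L_1|_Z}$ on $Z$ produces the short exact sequence
\[
0 \to K \otimes \mathcal{O}_Z \to M_{L_1}|_Z \to M_{L_1|_Z} \to 0. \qquad (\star)
\]

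I would next track what happens on global sections after twisting by $L_2|_Z$. The long exact sequence attached to $0 \to M_{L_1}|_Z \otimes L_2|_Z \to V \otimes L_2|_Z \to (L_1+L_2)|_Z \to 0$ identifies $H^0(M_{L_1}|_Z \otimes L_2|_Z)$ with the kernel of $\mu : V \otimes H^0(L_2|_Z) \to H^0((L_1+L_2)|_Z)$, and this $\mu$ is surjective because it factors as the composition $V \otimes H^0(L_2|_Z) \twoheadrightarrow H^0(L_1|_Z)\otimes H^0(L_2|_Z) \twoheadrightarrow H^0((L_1+L_2)|_Z)$ of the surjections from (i) and (iii). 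A second snake lemma applied to this diagram (against the analogous sequence for $M_{L_1|_Z}$) then yields
\[
0 \to K \otimes H^0(L_2|_Z) \to H^0(M_{L_1}|_Z \otimes L_2|_Z) \to H^0(M_{L_1|_Z} \otimes L_2|_Z) \to 0. \qquad (\star\star)
\]

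To finish, I would form the commutative diagram whose top row is $(\star\star)$ tensored with $\mathcal{O}_Z$ and whose bottom row is $(\star)$ tensored with $L_2|_Z$, with vertical arrows the evaluation maps. The left vertical is $\mathrm{id}_K$ tensored with the evaluation $H^0(L_2|_Z) \otimes \mathcal{O}_Z \to L_2|_Z$, which is surjective because $L_2|_Z$ is globally generated; the right vertical is surjective by (ii). The $5$-lemma therefore forces the middle evaluation $H^0(M_{L_1}|_Z \otimes L_2|_Z) \otimes \mathcal{O}_Z \to M_{L_1}|_Z \otimes L_2|_Z$ to be surjective, which is the desired global generation. The main bookkeeping obstacle is arranging the two snake lemma comparisons correctly, in particular verifying that $K \otimes H^0(L_2|_Z)$ embeds into $\ker\mu$; once this is in place, the final $5$-lemma step is formal.
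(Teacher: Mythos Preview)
Your argument is correct and follows essentially the same route as the paper: both proofs extract the short exact sequence $(\star)$ (the paper writes the kernel as $H^0(L_1\otimes\mathcal{I}_{Z/X})$), verify via (i) and (iii) that the induced sequence $(\star\star)$ on global sections is exact on the right, and then conclude by comparing evaluation maps in a three-column diagram. The only cosmetic difference is that the paper builds the twist by $L_2|_Z$ into diagram~\eqref{eq1} from the outset rather than tensoring afterwards.
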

\begin{proof}
    Consider the commutative diagram with exact rows and columns where $\mathcal{I}_{Z/X}$ denotes the ideal sheaf of $Z\subset X$:
    \begin{equation}\label{eq1}
    \begin{tikzcd}
        & 0\arrow[d] & 0\arrow[d] & &\\
        & H^0( L_1\otimes\mathcal{I}_{Z/X})\otimes  L_2|_Z\arrow[r, equal]\arrow[d] & H^0( L_1\otimes\mathcal{I}_{Z/X})\otimes  L_2|_Z\arrow[d] & &\\
        0\arrow[r] & M_{ L_1}\otimes  L_2\otimes\mathcal{O}_Z\arrow[r]\arrow[d] & H^0( L_1)\otimes  L_2|_Z\arrow[r]\arrow[d] & ( L_1+ L_2)|_Z\arrow[r]\arrow[d, equal] & 0\\
        0\arrow[r] & M_{ L_1|_Z}\otimes  L_2|_Z\arrow[r]\arrow[d] & H^0( L_1|_Z)\otimes  L_2|_Z\arrow[r]\arrow[d] & ( L_1+ L_2)|_Z\arrow[r] & 0\\
        & 0 & 0 & &
    \end{tikzcd}
    \end{equation}
    In the above, the middle column is exact by our assumption (i). The middle two rows of the above induces the commutative diagram with exact rows
    \begin{equation*}
    \begin{tikzcd}
        0\arrow[r] & H^0(M_{ L_1}\otimes  L_2\otimes\mathcal{O}_Z)\arrow[r]\arrow[d] & H^0( L_1)\otimes H^0( L_2|_Z)\arrow[r, two heads]\arrow[d, two heads] & H^0(( L_1+ L_2)|_Z)\arrow[r]\arrow[d, equal] & 0\\
        0\arrow[r] & H^0(M_{ L_1|_Z}\otimes  L_2|_Z)\arrow[r] & H^0( L_1|_Z)\otimes H^0( L_2|_Z)\arrow[r, two heads] & H^0(( L_1+ L_2)|_Z)\arrow[r] & 0
    \end{tikzcd}
    \end{equation*}
    In the above, the indicated map in the bottom row is surjective by the assumption (iii), whence the indicated map in the top row is also surjective. Consequently, snake lemma shows that the left-most vertical arrow of the above diagram is also surjective. 
    Now the left-most vertical sequence of \eqref{eq1} via evaluation maps 
    gives rise to the following diagram with exact rows:
    \small
    \begin{equation*}
    \begin{tikzcd}
        0\arrow[r] & H^0( L_1\otimes\mathcal{I}_{Z/X})\otimes H^0( L_2|_Z)\otimes \mathcal{O}_Z\arrow[r]\arrow[d, two heads] & H^0(M_{ L_1}\otimes  L_2\otimes\mathcal{O}_Z)\otimes\mathcal{O}_Z\arrow[d]\arrow[r] & H^0(M_{ L_1|_Z}\otimes  L_2|_Z)\otimes\mathcal{O}_Z\arrow[d]\arrow[r] & 0\\
        0\arrow[r] & H^0( L_1\otimes\mathcal{I}_{Z/X})\otimes  L_2|_Z\arrow[r] & M_{ L_1}\otimes  L_2\otimes\mathcal{O}_Z\arrow[r] & M_{ L_1|_Z}\otimes  L_2|_Z\arrow[r] & 0
    \end{tikzcd}
    \end{equation*}
    \normalsize
    Note that the right-most vertical map is surjective by the assumption (ii), whence the assertion follows as the left-most vertical map is also surjective.
\end{proof}

We now proceed to discuss the positivity of twisted syzygy bundles on products. Let $X$ and $Y$ be two smooth projective varieties and let $p_1$ and $p_2$ be projections from $X\times Y$ to $X$ and $Y$ respectively. If $\mathcal{A}$ and $\mathcal{B}$ are coherent sheaves on $X$ and $Y$ respectively, we denote the sheaf $p_1^*\mathcal{A}\otimes p_2^*\mathcal{B}$ on $X\times Y$ by $\mathcal{A}\boxtimes\mathcal{B}$.

\smallskip

In this situation, we record the following result:

\begin{proposition}\label{prop2'}
Let $X$ and $Y$ be two smooth projective varieties. Let $\mathcal{A}$ (resp. $\mathcal{B}$) be a base-point free line bundle on $X$ (resp. $Y$). Then we have a short exact sequence
\begin{equation}\label{box1'}
            0\to M_{\mathcal{A}}\boxtimes (H^0(\mathcal{B})\otimes \mathcal{O}_Y)\to M_{\mathcal{A}\boxtimes\mathcal{B}}\to \mathcal{A}\boxtimes M_{\mathcal{B}}\to 0.
\end{equation}
In particular, if $M_\mathcal{A}\langle \delta L_1\rangle$ and $M_\mathcal{B}\langle \delta L_2\rangle$ are nef for some $\delta\in\mathbb{Q}_{>0}$ and ample line bundles $L_1$ and $L_2$ on $X$ and $Y$ respectively, then so is $M_{\mathcal{A}\boxtimes\mathcal{B}}\langle \delta (L_1\boxtimes L_2)\rangle$. 
\end{proposition}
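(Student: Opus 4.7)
My plan is to prove the sequence \eqref{box1'} by factoring the evaluation map for $\mathcal{A}\boxtimes\mathcal{B}$ as a two-step partial evaluation, and then to propagate nefness through the resulting sequence using standard properties of $\mathbb{Q}$-twisted sheaves.

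For the sequence, the key input is the Künneth isomorphism $H^0(\mathcal{A}\boxtimes\mathcal{B}) = H^0(\mathcal{A})\otimes H^0(\mathcal{B})$, which makes the evaluation map factor in a natural way. Concretely, I would first pull back the defining sequence
\[
0 \to M_\mathcal{A} \to H^0(\mathcal{A})\otimes\mathcal{O}_X \to \mathcal{A} \to 0
\]
via the flat projection $p_1\colon X\times Y\to X$ and tensor with $p_2^*\bigl(H^0(\mathcal{B})\otimes\mathcal{O}_Y\bigr)$, producing a short exact sequence whose surjection is $\mathrm{ev}_\mathcal{A}\boxtimes\mathrm{id}$ and whose kernel is $M_\mathcal{A}\boxtimes\bigl(H^0(\mathcal{B})\otimes\mathcal{O}_Y\bigr)$. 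Composing that surjection with the map $\mathrm{id}\boxtimes \mathrm{ev}_\mathcal{B}\colon \mathcal{A}\boxtimes\bigl(H^0(\mathcal{B})\otimes\mathcal{O}_Y\bigr)\twoheadrightarrow \mathcal{A}\boxtimes\mathcal{B}$, whose kernel is $\mathcal{A}\boxtimes M_\mathcal{B}$, recovers exactly $\mathrm{ev}_{\mathcal{A}\boxtimes\mathcal{B}}$. The snake lemma applied to this two-step factorization then identifies $M_{\mathcal{A}\boxtimes\mathcal{B}}$ as an extension of $\mathcal{A}\boxtimes M_\mathcal{B}$ by $M_\mathcal{A}\boxtimes\bigl(H^0(\mathcal{B})\otimes\mathcal{O}_Y\bigr)$, which is precisely the sequence \eqref{box1'}.

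For the nefness statement, I would twist \eqref{box1'} by $\delta(L_1\boxtimes L_2)$ to obtain
\[
0 \to \bigl(M_\mathcal{A}\langle\delta L_1\rangle\bigr)\boxtimes\bigl(H^0(\mathcal{B})\otimes\mathcal{O}_Y\langle\delta L_2\rangle\bigr) \to M_{\mathcal{A}\boxtimes\mathcal{B}}\langle\delta(L_1\boxtimes L_2)\rangle \to \bigl(\mathcal{A}\langle\delta L_1\rangle\bigr)\boxtimes\bigl(M_\mathcal{B}\langle\delta L_2\rangle\bigr) \to 0.
\]
Since $\mathcal{A}$ is globally generated (hence nef), the trivial bundle $H^0(\mathcal{B})\otimes\mathcal{O}_Y$ is nef, and $L_1, L_2$ are ample, each of the four factors appearing in the outer terms is a nef $\mathbb{Q}$-twisted sheaf; in particular $M_\mathcal{A}\langle \delta L_1\rangle$ and $M_\mathcal{B}\langle \delta L_2\rangle$ are nef by hypothesis. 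Using that external tensor products of nef $\mathbb{Q}$-twisted sheaves remain nef, and that an extension of nef $\mathbb{Q}$-twisted sheaves is nef, one concludes that the middle term $M_{\mathcal{A}\boxtimes\mathcal{B}}\langle\delta(L_1\boxtimes L_2)\rangle$ is nef.

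The main technical point to be careful about is the compatibility of the $\mathbb{Q}$-twisted formalism of \cite[Section 6.2A]{Laz'} with external tensor products and with taking extensions; both statements reduce, via the definition through $\mathcal{O}_{\mathbb{P}(\mathcal{F})}(1)$, to classical positivity facts for line bundles on projective bundles, and the construction of the sequence itself is then a routine diagram chase. I do not anticipate any serious obstacle beyond this bookkeeping, since the hard work is absorbed into the Künneth identification of the space of global sections.
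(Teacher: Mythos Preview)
Your proposal is correct and follows essentially the same approach as the paper: the paper sets up exactly the two-row diagram you describe (top row the pullback of the defining sequence for $M_\mathcal{A}$ tensored with $H^0(\mathcal{B})\otimes\mathcal{O}_Y$, bottom row the defining sequence for $M_{\mathcal{A}\boxtimes\mathcal{B}}$ via K\"unneth) and applies the snake lemma, while the nefness assertion is dismissed there as ``a consequence of the formal properties of $\mathbb{Q}$-twisted sheaves,'' which is precisely the content you spell out.
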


\begin{proof}
        The exact sequence follows from the following diagram 
        \begin{equation*}
        \begin{tikzcd}
            0\arrow[r] & M_{\mathcal{A}}\boxtimes (H^0(\mathcal{B})\otimes \mathcal{O}_Y)\arrow[d]\arrow[r] & H^0(\mathcal{A})\otimes H^0(\mathcal{B})\otimes\mathcal{O}_{X\times Y}\arrow[d, equal]\arrow[r] & \mathcal{A}\boxtimes (H^0(\mathcal{B})\otimes \mathcal{O}_Y)\arrow[d]\arrow[r] & 0\\
            0\arrow[r] & M_{\mathcal{A}\boxtimes \mathcal{B}}\arrow[r] & H^0(\mathcal{A})\otimes H^0(\mathcal{B})\otimes\mathcal{O}_{X\times Y}\arrow[r] & \mathcal{A}\boxtimes \mathcal{B}\arrow[r] & 0
        \end{tikzcd}
        \end{equation*}
        using snake lemma.
The last assertion is a consequence of the formal properties of $\mathbb{Q}$-twisted sheaves.
\end{proof}

\subsection{Positivity of syzygy bundles for abelian, hyperelliptic and Kummer varieties}\label{subsec:nefness-abelian} Let us first recall some basic notions regarding positivity on abelian varieties.

Let $(X, L)$ be a polarized abelian variety of dimension $g$. Further, denote the dual abelian variety by $\hat{X}$ and the normalized Poincar\'e bundle on $X\times \hat{X}$ by $\mathcal{P}$. For $\alpha \in \hat{X}$, denote the corresponding line bundle on $X$ by $P_{\alpha}:=\mathcal{P}|_{X\times \left\{\alpha \right\}}$. For a coherent sheaf $\mathcal{F}$ on $X$, let $h^i_{\textrm{gen}}(\mathcal{F})$ be the dimension of the $H^i(\mathcal{F}\otimes P_{\alpha})$ for $\alpha\in\hat{X}$ general. Given any $b\in\mathbb{Z}^{>0}$, denote the multiplication by $b$ isogeny by $b_X:X\to X$.

In \cite{JP}, Jiang--Pareschi defined the {\it cohomological rank function} as follows $$h^i_{\mathcal{F}, L}:\mathbb{Q}\to\mathbb{Q}^{>0},\, x\mapsto h^i_{\mathcal{F}}(x L)\,\textrm{ where }\, h_{\mathcal{F}}^i(x L):=b^{-2g}h^i_{\textrm{gen}}(b_X^*\mathcal{F}\otimes (ab L))\,\textrm{ if }\, x=\frac{a}{b}, b>0.$$ We can think of $h^i_{\mathcal{F}}(x L)$ as the generic cohomological rank of the $\mathbb{Q}$-twisted sheaf $\mathcal{F}\langle x L\rangle$. Through these functions, {\it loc. cit.} defined the following  invariant:
$$\beta( L):=\inf\left\{x\in\mathbb{Q}\mid h^1_{\mathcal{I}_p}(x L)=0\right\}.$$ 
In the above, $p\in X$ is a point with ideal sheaf $\mathcal{I}_p$; the definition of the {\it base-point freeness threshold} $\beta( L)$ does not depend on the choice of $p\in X$. 
It turns out that $\beta( L)\in(0,1]$, and $\beta( L)<1$ if and only if $ L$ is base-point free (hence the name). Furthermore, 
\begin{equation}\label{rel}
    \beta(m L)=\frac{1}{m}\beta( L)\,\textrm{ for }m\geq 1.
\end{equation}

Let $X$ be an irregular variety with Albanese map ${\rm alb}_X:X\to A$. Given a coherent sheaf $\mathcal{F}$ on $X$, the  {\it cohomological support loci} $V^i(\mathcal{F})$ is defined as follows: $$V^i(\mathcal{F}):=\left\{\alpha\in \hat{A}\mid h^i(\mathcal{F}\otimes {\rm alb}_X^*(P_{\alpha}))\neq 0\right\}.$$
\begin{definition}
A coherent sheaf is called $\mathcal{F}$ is {\it GV} (resp. {\it M-regular}, {\it IT$_0$}) if $\codim_{\hat{A}} V^i(\mathcal{F})\geq i$ (resp. $\codim_{\hat{A}} V^i(\mathcal{F})> i$, $V^i(\mathcal{F})=\emptyset$) for all $i >0 $.
\end{definition}
In particular, we have the following chain of implications:
\begin{center}
    IT$_0$ $\implies$ M-regular $\implies$ GV.
\end{center}
We return to the abelian case. We again assume $(X, L)$ is a polarized abelian variety of dimension $g$ and $\mathcal{F}$ is a coherent sheaf on $X$. The definitions of GV, M-regularity and IT$_0$ sheaves extend to the $\mathbb{Q}$-twisted setting as follows: 
\begin{definition}
$\mathcal{F}\langle x L\rangle$ is {\it GV} (resp. {\it M-regular}, {\it IT$_0$}) if so is $b_X^*\mathcal{F}\otimes (ab L)$ where $x=\frac{a}{b}$, $b>0$.
\end{definition} 
It turns out that $\mathcal{F}\langle x L\rangle$ is GV if and only if $\mathcal{F}\langle (x+x') L\rangle$ is IT$_0$ for all rational $x'>0$ \cite[Theorem 5.2]{JP}. Moreover, we have
\begin{center}
    $\beta( L)<x\iff \mathcal{I}_o\langle x L\rangle$ is IT$_0$,
\end{center}
where $o \in X$ is the origin.

\begin{lemma}\label{gvn}
    Let $x\in\mathbb{Q}$, and assume $\mathcal{F}\langle x L\rangle$ is GV (resp. M-regular). Then $\mathcal{F}\langle x L\rangle$ is nef (resp. ample).
\end{lemma}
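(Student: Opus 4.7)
My plan is to reduce the $\mathbb{Q}$-twisted assertion to the classical (untwisted) statements of Pareschi--Popa via pullback along the multiplication-by-$b$ isogeny $b_X\colon X\to X$, and then descend positivity back down.

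Write $x=a/b$ with $b>0$. By the very definition of GV/M-regularity in the $\mathbb{Q}$-twisted setting recalled immediately before the lemma, the hypothesis is precisely that the honest coherent sheaf $b_X^{*}\mathcal{F}\otimes(ab L)$ is GV (resp.\ M-regular) on $X$ in the usual sense. At this point I invoke the classical generic vanishing package on abelian varieties: every GV sheaf is nef, and every M-regular sheaf is ample (see \cite{PaPo1,PP2}). This gives the positivity of the honest sheaf $b_X^{*}\mathcal{F}\otimes(ab L)$.

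The remaining step is descent. By definition, $\mathcal{F}\langle xL\rangle$ is nef (resp.\ ample) iff the $\mathbb{Q}$-Cartier class
\[
\xi \;:=\; c_1\bigl(\mathcal{O}_{\mathbb{P}(\mathcal{F})}(1)\bigr) + x\,\pi^{*}L
\]
on $\mathbb{P}(\mathcal{F})$ is nef (resp.\ ample). The induced morphism $\tilde b_X\colon \mathbb{P}(b_X^{*}\mathcal{F})\to \mathbb{P}(\mathcal{F})$ is finite and surjective, and since $b_X^{*}L\equiv b^{2}L$, the pullback $\tilde b_X^{*}\xi$ is numerically equivalent to $c_1\bigl(\mathcal{O}_{\mathbb{P}(b_X^{*}\mathcal{F})}(1)\bigr)+ab\,\tilde\pi^{*}L$, which is exactly the tautological class attached to $b_X^{*}\mathcal{F}\otimes(ab L)$. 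This class is nef (resp.\ ample) by the previous paragraph; since nefness and ampleness of $\mathbb{Q}$-Cartier classes are both reflected by finite surjective morphisms, the class $\xi$ itself is nef (resp.\ ample), which is precisely the conclusion of the lemma.

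\textbf{Where the work lies.} There is no real obstacle: the substantive content is imported directly from the classical Pareschi--Popa theorems ``GV $\Rightarrow$ nef'' and ``M-regular $\Rightarrow$ ample''. The only thing requiring care is bookkeeping around the $\mathbb{Q}$-twist under the isogeny, which reduces to the identity $b_X^{*}L\equiv b^{2}L$ and to the standard fact that finite surjective pullback both preserves and reflects nefness/ampleness.
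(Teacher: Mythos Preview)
Your proposal is correct and follows essentially the same route as the paper: both reduce to the untwisted case via the multiplication-by-$b$ isogeny, invoke the classical implications GV $\Rightarrow$ nef and M-regular $\Rightarrow$ ample for the honest sheaf $b_X^{*}\mathcal{F}\otimes(abL)$, and then descend positivity along the finite map $\widetilde{b_X}\colon \mathbb{P}(b_X^{*}\mathcal{F})\to \mathbb{P}(\mathcal{F})$. The only cosmetic difference is in the references quoted for the classical implications: the paper cites \cite[Theorem~4.1]{PaPo3} for GV $\Rightarrow$ nef and, for M-regular $\Rightarrow$ ample, routes through continuous global generation \cite[Proposition~2.13]{PaPo1} together with \cite[Corollary~3.2]{Deb}, rather than \cite{PaPo1,PP2} as you do.
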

\begin{proof}
    Let $x=\frac{a}{b}$ with $b>0$. Consider the Cartesian square 
    \[
    \begin{tikzcd}
        \mathbb{P}(b_X^*\mathcal{F})\arrow[r, "\widetilde{b_X}"]\arrow[d, swap, "\widetilde{\pi}"] & \mathbb{P}(\mathcal{F})\arrow[d, "\pi"]\\
    X\arrow[r, "b_X"] & X
    \end{tikzcd}
    \]
    Note that we have $\widetilde{b_X}^*(\pi^*(x L))\equiv\widetilde{\pi}^*(b_X^*(x L))\equiv\widetilde{\pi}^*(ab L)$. Thus, we deduce $$\mathcal{O}_{\mathbb{P}(b_X^*\mathcal{F})}(1)\otimes(\widetilde{\pi}^*(ab L))\equiv\widetilde{b_X}^*(\mathcal{O}_{\mathbb{P}(\mathcal{F})}(1)\otimes(\pi^*(x L))).$$

    Now, when $\mathcal{F}\langle x L\rangle$ is GV (resp. M-regular), $\mathcal{O}_{\mathbb{P}(b_X^*\mathcal{F})}(1)\otimes(\widetilde{\pi}^*(ab L))$ is nef (resp. ample) by \cite[Theorem 4.1]{PaPo3} (resp. \cite[Corollary 3.2]{Deb}, keeping in mind that M-regular implies continuously globally generated, see \cite[Proposition 2.13]{PaPo1} for details).
    The assertions follow since $\widetilde{b_X}$ is finite.
\end{proof}

The following result is crucial for us:

\begin{proposition}[{\cite[Proposition 8.1]{JP}, \cite[Proposition 4.1]{MR4474904}}]\label{prop_JP_8.1}
    Assume $ L$ is base-point free.
    Then $\mathcal{I}_o \langle x L \rangle$ is GV (resp. M-regular, IT$_0$) if and only if so is $M_{ L}\langle \frac{x}{1-x}  L\rangle$ for a rational number $0 < x <1$.
\end{proposition}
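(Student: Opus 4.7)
The plan is to derive both implications from the defining short exact sequence
$$0 \to M_L \to H^0(L) \otimes \mathcal{O}_X \to L \to 0 \quad (*)$$
together with the residue sequence at the origin
$$0 \to \mathcal{I}_o \otimes L \to L \to \mathbb{C}_o \to 0, \quad (**)$$
by a comparison of cohomological support loci. Combining $(*)$ and $(**)$ in a $3\times 3$ diagram (or as an octahedron in the derived category), the preimage $K$ of $\mathcal{I}_o \otimes L \subset L$ under the evaluation map fits in short exact sequences
$$0 \to M_L \to K \to \mathcal{I}_o \otimes L \to 0 \quad \text{and} \quad 0 \to K \to H^0(L)\otimes\mathcal{O}_X \to \mathbb{C}_o \to 0,$$
which together encode the relation between the syzygy bundle and the ideal sheaf of the origin.

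To handle the $\mathbb{Q}$-twist, I would write $x=a/b$ in lowest terms and unwind the definitions: $\mathcal{I}_o\langle xL\rangle$ being GV (respectively M-regular, IT$_0$) is the corresponding statement about the honest sheaf $b_X^*\mathcal{I}_o \otimes L^{ab}$, while $M_L\langle \tfrac{x}{1-x}L\rangle=M_L\langle \tfrac{a}{b-a}L\rangle$ unwinds to a statement about $(b-a)_X^* M_L \otimes L^{a(b-a)}$. Using the isogeny identity $n_X^*L\equiv L^{n^2}$ (up to $\hat X$-torsion), one can lift both sides to a common cover $N_X$ with $N$ a common multiple of $b$ and $b-a$. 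The cohomology long exact sequences attached to the twists of the two displayed sequences above by $L^k\otimes P_\alpha$, together with Kodaira-type vanishing for positive powers of $L$, then identify the loci $V^i$ on the two sides up to the prescribed shift in the twisting parameter.

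The main obstacle is the bookkeeping that forces the two different isogeny degrees $b$ and $b-a$ to match; this is precisely what the rational substitution $x\mapsto \tfrac{x}{1-x}$ accomplishes. Once the common-cover comparison is in place, the equivalence among the three flavors follows from matching codimensions of cohomological support loci: $\mathrm{codim}\,V^i\geq i$ for GV, $>i$ for M-regular, and $V^i=\emptyset$ for IT$_0$. The skyscraper contribution coming from the second short exact sequence above affects only $V^0$ and $V^1$, and its effect is absorbed in all three cases, so the comparison goes through uniformly.
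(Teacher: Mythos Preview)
The paper does not actually prove this proposition; it is quoted from \cite{JP} and \cite{MR4474904} and used as a black box. So there is no ``paper's proof'' to compare against, and your proposal must stand on its own.

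Unfortunately there is a genuine gap. Your two short exact sequences do relate $M_L$ to $\mathcal{I}_o\otimes L$, but when you $\mathbb{Q}$-twist an exact sequence every term receives the \emph{same} twist. Concretely, twisting $0\to M_L\to K\to \mathcal{I}_o\otimes L\to 0$ by $\langle tL\rangle$ compares $M_L\langle tL\rangle$ with $\mathcal{I}_o\langle (t+1)L\rangle$. The proposition, however, asks you to compare $M_L\langle \frac{x}{1-x}L\rangle$ with $\mathcal{I}_o\langle xL\rangle$, and one checks immediately that there is no $t$ with $t=\frac{x}{1-x}$ and $t+1=x$. Passing to a common multiplication cover $N_X$ does not repair this: on the cover the pulled-back sequence is still twisted uniformly, and a direct computation (e.g.\ with $x=a/b$ and $N=b(b-a)$) shows that the $\mathcal{I}_o$-term ends up carrying the twist $b^3(b-a)L$ rather than the required $ab(b-a)^2L$. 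The substitution $x\mapsto \frac{x}{1-x}$ is genuinely nonlinear and cannot be produced by a linear twist of a fixed exact sequence on any cover.

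What is missing is a step that changes the reference polarization. The key extra ingredient in the cited proofs is the observation that, for the multiplication map $n_X$, pulling back the evaluation sequence and comparing with the evaluation sequence for $n_X^*L$ yields
\[
0 \longrightarrow n_X^*M_L \longrightarrow M_{n_X^*L} \longrightarrow \bigl(H^0(n_X^*L)/H^0(L)\bigr)\otimes\mathcal{O}_X \longrightarrow 0,
\]
so that, up to a trivial summand which is harmless for GV/M-regular/IT$_0$ after any positive twist, one may replace $n_X^*M_L$ by $M_{n_X^*L}\simeq M_{n^2L\otimes P}$. This replaces $L$ by (numerically) $n^2L$ inside the syzygy construction, and it is precisely this quadratic rescaling, combined with sequences of the type you wrote down (now for $n_X^*L$ rather than $L$), that produces the fractional-linear relationship $x\leftrightarrow \frac{x}{1-x}$. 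Without this step the bookkeeping cannot be made to match.
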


\begin{corollary}\label{ab-g+1}
{ Assume $ L$ is base-point free and $m \geq g$. 
Then $ M_{m L}( L)$ is nef if $g =1$,  and ample if $g \geq 2$.}
\end{corollary}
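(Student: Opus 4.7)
The plan is to apply Proposition \ref{prop_JP_8.1} to the base-point free line bundle $mL$ (in place of $L$) in order to trade the positivity question for $M_{mL}\otimes L$ for a cohomology vanishing for a twist of $\mathcal{I}_o$. For a rational number $0<x<1$, the proposition equates GV/M-regularity/IT$_0$ of $M_{mL}\langle\tfrac{x}{1-x}(mL)\rangle$ with that of $\mathcal{I}_o\langle x(mL)\rangle$. The right choice is $x=\tfrac{1}{m+1}$, which forces $\tfrac{x}{1-x}=\tfrac{1}{m}$; the left-hand $\mathbb{Q}$-twisted sheaf is then $M_{mL}\otimes L$ and the right-hand one is $\mathcal{I}_o\langle\tfrac{m}{m+1}L\rangle$. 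Combined with Lemma \ref{gvn}, this reduces the nefness (resp.\ ampleness) of $M_{mL}\otimes L$ to the GV (resp.\ M-regularity) of $\mathcal{I}_o\langle\tfrac{m}{m+1}L\rangle$.

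To check the latter I would translate it into an estimate on the basepoint-freeness threshold. From the characterizations recalled in the excerpt, $\mathcal{I}_o\langle yL\rangle$ is IT$_0$ iff $\beta(L)<y$; taking the GV limit from above (using that $\mathcal{F}\langle yL\rangle$ is GV iff $\mathcal{F}\langle (y+y')L\rangle$ is IT$_0$ for every rational $y'>0$), it is GV iff $\beta(L)\le y$. Since $\tfrac{m}{m+1}$ is increasing in $m$ and $m\ge g$, everything reduces to the bound $\beta(L)\le \tfrac{g}{g+1}$, with strictness in the $g\ge 2$ case so as to upgrade GV to IT$_0$ (which implies M-regular).

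For $g=1$ the bound is elementary: base-point freeness forces $\deg L\ge 2$, and a direct computation with the multiplication-by-$b$ isogeny and Riemann-Roch on the elliptic curve yields $\beta(L)=\tfrac{1}{\deg L}\le \tfrac{1}{2}\le \tfrac{m}{m+1}$ for every $m\ge 1$; hence $\mathcal{I}_o\langle\tfrac{m}{m+1}L\rangle$ is GV and nefness follows. The main obstacle is the $g\ge 2$ case: one must promote the mere bound $\beta(L)<1$ (the only estimate the excerpt extracts directly from base-point freeness) to the quantitative inequality $\beta(L)<\tfrac{g}{g+1}$ on the abelian $g$-fold, and this is the point at which the hypothesis $m\ge g$ is genuinely used. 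The expected route is to invoke (or derive) the corresponding effective bound on basepoint-freeness thresholds for base-point free line bundles from the cohomological rank function theory of \cite{JP} and its subsequent refinements; once granted, ampleness of $M_{mL}\otimes L$ follows from IT$_0$ of $\mathcal{I}_o\langle\tfrac{m}{m+1}L\rangle$ via Lemma \ref{gvn}.
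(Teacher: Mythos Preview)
Your reduction via Proposition~\ref{prop_JP_8.1} with $x=\tfrac{1}{m+1}$ is exactly right, and your treatment of the elliptic case matches the paper's. The gap is in the $g\ge 2$ case. You propose to obtain ampleness by proving the \emph{strict} inequality $\beta(L)<\tfrac{g}{g+1}$, so that $\mathcal{I}_o\langle\tfrac{m}{m+1}L\rangle$ is IT$_0$; but you do not prove this, and in fact the paper only records the non-strict bound $\beta(L)\le\tfrac{g}{g+1}$ (Remark~\ref{ab-g+1'}(1)). Aiming for IT$_0$ is the wrong target: what is needed is M-regularity, which is strictly weaker and does not follow from any statement about $\beta(L)$ alone.

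The paper's move is to verify M-regularity of $M_{gL}\otimes L$ \emph{directly} by bounding the codimension of $V^1$. From the defining sequence
\[
0\to M_{gL}\otimes L\otimes P_\alpha\to H^0(gL)\otimes L\otimes P_\alpha\to (g+1)L\otimes P_\alpha\to 0,
\]
one has $H^1(M_{gL}\otimes L\otimes P_\alpha)=0$ whenever the multiplication map $H^0(gL)\otimes H^0(L\otimes P_\alpha)\to H^0((g+1)L\otimes P_\alpha)$ is surjective. Castelnuovo--Mumford regularity gives this surjectivity as soon as $H^i((g-i)L\otimes P_\alpha)=0$ for all $i\ge 1$, which holds provided $P_{g\alpha}\not\simeq\mathcal{O}_X$ (the only obstruction is $H^g(P_\alpha^{\otimes g})$). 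Hence $V^1(M_{gL}\otimes L)$ is contained in the $g$-torsion of $\hat{X}$, a finite set of codimension $g\ge 2$, and $M_{gL}\otimes L$ is M-regular. Feeding this back through Proposition~\ref{prop_JP_8.1} gives M-regularity of $\mathcal{I}_o\langle\tfrac{g}{g+1}L\rangle$, hence of $\mathcal{I}_o\langle\tfrac{m}{m+1}L\rangle$ for $m\ge g$, and then of $M_{mL}(L)$; Lemma~\ref{gvn} finishes. The key idea you are missing is this direct cohomological-support-locus computation, which sidesteps the threshold question entirely.
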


\begin{proof}
{ Since 
$M_{m L}( L) = M_{m L}\langle \frac{x}{1-x}m L\rangle$
for $x =\frac{1}{m+1}$,
\begin{align}\label{eq_ab-g+1}
    \text{$M_{m L}( L) $ is GV (resp.\ M-regular)} \iff \text{so is $\mathcal{I}_o \langle \tfrac{m}{m+1}  L \rangle $}
\end{align}
by Proposition \ref{prop_JP_8.1}.

Assume $g=1$. By \eqref{eq_ab-g+1}, $\mathcal{I}_o \langle \frac{m}{m+1}  L \rangle $ is GV if and only if $ -1+ \frac{m}{m+1} \deg  L \geq 0$, that is,
$\deg  L \geq \frac{m+1}{m}$ by \cite[Example 2.1]{MR4474904}.
Since $\deg  L\geq 2$ by the base-point freeness of $  L$,
$\deg  L \geq \frac{m+1}{m}$ holds for any $m \geq g=1$.
Thus, for $m \geq 1$, $M_{m L}( L)$ is GV and hence nef by Lemma \ref{gvn}.
}

Assume $g \geq 2$ and let us consider the short exact sequence 
\[
0 \to M_{g L} \otimes  L \otimes P_{\alpha} \to H^0(g L)\otimes L \otimes P_{\alpha} \to (g+1) L \otimes P_{\alpha} \to 0.
\]
If $P_{\alpha}^g =P_{g \alpha} \not \simeq \mathcal{O}_X$,
then the map
$$H^0(g L)\otimes H^0( L \otimes P_{\alpha}) \to H^0((g+1) L\otimes P_{\alpha})$$ is surjective by Castelnuovo-Mumford regularity
and hence $$H^1(M_{g L} \otimes  L \otimes P_{\alpha}) =0. $$
Thus $M_{g L} \otimes  L$ is 
M-regular since $g \geq 2$.
Hence $\mathcal{I}_o \langle \frac{g}{g+1}  L \rangle $ is M-regular by \eqref{eq_ab-g+1}.
Then $\mathcal{I}_o \langle \frac{m}{m+1}  L \rangle $ is M-regular for $m \geq g$ by $\frac{m}{m+1} \geq \frac{g}{g+1}$.
Thus, for $m\geq g$, $M_{m L}( L)$ is M-regular by \eqref{eq_ab-g+1} and hence ample by Lemma \ref{gvn}.
\end{proof}

The second part of the remark below will be used in the proof of the first part of Theorem \ref{thm3'}.

\begin{remark}\label{ab-g+1'} \em{We also observe the following:
\begin{itemize}
    \item[$(1)$] By the above proof, we see that $\beta( L) \leq \frac{g}{g+1}$ if $L$ is base-point free.
    \item[$(2)$] Let $m\geq 2$ be an integer. If $ L$ is merely ample (and potentially has base-points), then $M_{m L}\langle a L\rangle$ is nef for any rational $a\geq \frac{m}{m-1}$. Indeed, since 
$M_{m L}\langle a L\rangle = M_{m L}\langle \frac{x}{1-x}m L\rangle$
for $x =\frac{a}{m+a}$, by Proposition \ref{prop_JP_8.1}
$M_{m L}(a L) $ is GV if and only if so is $\mathcal{I}_o \langle \tfrac{am}{m+a}  L \rangle $.
This holds if $ \tfrac{am}{m+a} \geq 1$, that is, $a \geq \frac{m}{m-1}$, whence the conclusion follows from Lemma \ref{gvn}.
\end{itemize}} 
\end{remark}

The notion of M-regularity is related to jet-ampleness for linear series on abelian varieties by \cite{PaPo3}.
We will use the following result:

\begin{theorem}[{\cite[Theorem 1.6]{MR4474904}}]\label{It}
    Let $L$ be an ample line bundle on an abelian variety $X$ and $k\geq 1$ an integer. If $\mathcal{I}_o\langle \frac{1}{k+1}L\rangle$ 
is M-regular, then L is $k$-jet ample.
\end{theorem}

Finally, we turn to hyperelliptic and Kummer varieties and recall the following recent results:

\begin{theorem}[{\cite[(5.3) on p. 16]{Cau25}}]\label{caucci}
    Let $G$ be a finite commutative group acting freely and not only by translations on an abelian variety $A$ of dimension $n$. Consider the quotient $\pi:A\to X:=A/G$. Let $L$ be an ample line bundle and  $N\equiv 0$ be a numerically trivial line bundle on $X$. Then 
    \begin{itemize}
        \item[$(1)$] The bundle $\pi^*(M_{N+mL}\otimes (2L))$ is GV for $m\geq 2n+1$. \footnote{This part follows easily by the arguments of \cite{Cau25}. This is pointed out to us by F. Caucci whom we thank.} 
        \item[$(2)$] If $q(X)>0$, then the same conclusion holds if $m\geq 2n$.
    \end{itemize}
\end{theorem}

\begin{lemma}[{\cite[Lemma 5.4]{MR4688553}}]\label{pos-kummer}
Let $K(A)$ be the Kummer variety associated to an abelian variety $A$.
Let $L$ be an ample line bundle on $K(A)$.
\footnote{$L$ is globally generated by \cite[Lemma 2.5]{MR4688553}.}
For $m \geq 2$,
$\pi^* M_{mL} \langle \frac{m}{2(m-1)} \pi^*L \rangle$ is IT$_0$.
\end{lemma}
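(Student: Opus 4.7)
The plan is to relate $\pi^*M_{mL}$ to the ordinary syzygy bundle $M_{m\pi^*L}$ on $A$, reducing the claim to the Jiang--Pareschi framework for cohomological rank functions on abelian varieties. Since $M_{mL}$ is locally free on $K(A)$ (as the kernel of a surjection of vector bundles onto the line bundle $mL$), pulling back via $\pi$ yields the short exact sequence
$$0 \to \pi^*M_{mL} \to H^0(K(A),mL)\otimes \mathcal{O}_A \to m\pi^*L \to 0.$$
Comparing with the defining sequence for $M_{m\pi^*L}$ via the inclusion $H^0(mL)\hookrightarrow H^0(m\pi^*L)$ of $\iota$-invariant sections (where $\iota\colon A\to A$ denotes the inverse involution) and applying the snake lemma, I would derive
$$0 \to \pi^*M_{mL} \to M_{m\pi^*L} \to V\otimes \mathcal{O}_A \to 0,$$
where $V=H^0(m\pi^*L)/H^0(mL)$ is the $\iota$-anti-invariant piece.

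Next I would twist by $\frac{m}{2(m-1)}\pi^*L$ and analyze the long exact sequence in cohomology after tensoring with an arbitrary $P_\alpha$. The right term $V\otimes \frac{m}{2(m-1)}\pi^*L$ is manifestly IT$_0$ as a direct sum of $\mathbb{Q}$-twisted ample line bundles, so it suffices to establish IT$_0$-ness of $M_{m\pi^*L}\langle \frac{m}{2(m-1)}\pi^*L\rangle$ together with the surjectivity of the induced map on $H^0$. By Proposition \ref{prop_JP_8.1} applied to the symmetric ample line bundle $\pi^*L$, IT$_0$-ness of the middle term reduces to the threshold inequality $\beta(\pi^*L) < \frac{m}{2m-1}$, which one would extract from the Kummer structure: because $L$ is globally generated on $K(A)$, the pulled-back bundle $\pi^*L$ inherits extra positivity over a generic symmetric ample bundle, which I would exploit via the interplay between the $\iota$-linearization and the polarization type of $(A,\pi^*L)$.

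Finally, the surjectivity on global sections between the middle and right terms of the twisted SES would follow from a standard Castelnuovo--Mumford regularity argument, since $m\pi^*L$ is sufficiently positive so that multiplication-of-sections maps surject. Stitching the two IT$_0$ inputs together through the long exact sequence then forces IT$_0$-ness of $\pi^*M_{mL}\langle \frac{m}{2(m-1)}\pi^*L\rangle$. The main obstacle will be the sharpness of the threshold bound on $\beta(\pi^*L)$: the generic estimate $\beta(\pi^*L) \leq g/(g+1)$ from Remark \ref{ab-g+1'}(1) is insufficient for $g$ large, so the proof must genuinely leverage the symmetry and descent data supplied by the Kummer construction rather than treating $\pi^*L$ as an arbitrary ample line bundle on $A$.
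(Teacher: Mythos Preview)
The paper does not prove this lemma; it is quoted verbatim from \cite[Lemma~5.4]{MR4688553} with no argument supplied. So there is no in-paper proof to compare against, and I can only assess your outline on its own terms.

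Your reduction via the exact sequence
\[
0 \longrightarrow \pi^*M_{mL} \longrightarrow M_{m\pi^*L} \longrightarrow V\otimes\mathcal{O}_A \longrightarrow 0
\]
is correct and is a natural way to attack the problem. The genuine gap is exactly where you flag it: applying Proposition~\ref{prop_JP_8.1} to the middle term requires $\beta(\pi^*L) < \tfrac{m}{2m-1}$, and you concede you do not have this bound, offering only the phrase ``leverage the symmetry and descent data.'' That is not a proof. The missing input is concrete and classical: a line bundle on $K(A)=A/\iota$ pulls back to a \emph{totally symmetric} line bundle on $A$, and totally symmetric line bundles are exactly the squares of symmetric line bundles (Mumford). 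Hence $\pi^*L\cong N^{\otimes 2}$ for some ample $N$, so by \eqref{rel} one gets $\beta(\pi^*L)=\tfrac{1}{2}\beta(N)\le\tfrac{1}{2}<\tfrac{m}{2m-1}$ for every $m\ge 2$. Until you write this down, the argument does not close; as you yourself note, the generic estimate $\beta\le g/(g+1)$ from Remark~\ref{ab-g+1'} is far too weak.

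A secondary soft spot: the $H^0$-surjectivity you invoke at the end is not literally a Castelnuovo--Mumford statement. After twisting, it asks that the projection $H^0(M_{m\pi^*L}\otimes T\otimes P_\alpha)\to V\otimes H^0(T\otimes P_\alpha)$ be onto for every $\alpha$, which unwinds to a multiplication-map surjectivity constrained by the $\iota$-eigenspace decomposition. This is believable in the given range but needs an actual computation, not a slogan.
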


\subsection{Positivity of syzygy bundles for regular varieties} 
In this subsection, we prove a proposition regarding the nefness of the syzygy bundle for regular varieties (i.e., varieties with trivial irregularity).

\begin{proposition}\label{prop1}
    Let $X$ be a smooth projective variety of dimension $n$ with $q(X)=0$. Let ${L}$ be a very ample line bundle on $X$.
    Let $\mathcal{C}_{L}$ be the class of smooth curves in $X$ for which there is a chain
    $$C=X_1\subset X_2\subset\cdots\subset X_n=X$$ such that each $X_i\in |L|_{X_{i+1}}|$ is smooth.
    For a curve $C$ in $\mathcal{C}_{L}$ we write $L_C:=L|_C$. 
    If $M_{K_C+2L_C}\otimes L_C$ is globally generated for every curve $C \in \mathcal{C}_{L}$, then $M_{K_X+mL}\otimes {L}$ is globally generated for all $m\geq n+1$.
\end{proposition}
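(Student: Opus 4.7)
The plan is to reduce to the case $m = n+1$ via Corollary~\ref{n+1}, and then proceed by induction on the dimension $n$. The base case $n = 1$ is immediate: here $X$ itself lies in $\mathcal{C}_L$, so the hypothesis directly yields global generation of $M_{K_X + 2L} \otimes L$.

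For the inductive step, I would fix an arbitrary point $x \in X$. Since $L$ is very ample, a standard Bertini argument produces a smooth divisor $Y \in |L|$ passing through $x$. The strategy is then to transfer global generation from $Y$ to $X$ via the restriction sequence
\[
0 \to M_{K_X + (n+1)L} \to M_{K_X + (n+1)L} \otimes L \to M_{K_X + (n+1)L} \otimes L \otimes \mathcal{O}_Y \to 0.
\]
The regularity hypothesis $q(X) = 0$ enters exactly here: from the defining sequence of $M_{K_X + (n+1)L}$ together with the vanishing $H^1(\mathcal{O}_X) = 0$, one obtains $H^1(M_{K_X + (n+1)L}) = 0$, so global sections restrict surjectively onto $H^0(M_{K_X + (n+1)L} \otimes L \otimes \mathcal{O}_Y)$. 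It therefore suffices to prove that $M_{K_X + (n+1)L} \otimes L \otimes \mathcal{O}_Y$ is globally generated at $x$.

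For this I would apply Lemma~\ref{resgg} with $Z = Y$, $L_1 = K_X + (n+1)L$, and $L_2 = L$. Hypothesis (i) follows from Kodaira vanishing for $H^1(K_X + nL)$. For (ii), adjunction identifies $(K_X + (n+1)L)|_Y$ with $K_Y + n L_Y$, and the inductive hypothesis applied to $(Y, L|_Y)$ gives global generation of $M_{K_Y + n L_Y} \otimes L_Y$; this application is legitimate because $\mathcal{C}_{L|_Y} \subseteq \mathcal{C}_L$ by extending every chain on $Y$ with the additional term $X$, so the hypothesis on curves descends from $X$ to $Y$. Hypothesis (iii) is Mumford's theorem: $K_Y + n L_Y$ is $0$-regular with respect to $L_Y$, the required vanishings $H^i(K_Y + (n-i)L_Y) = 0$ for $1 \leq i \leq n-1$ being Kodaira on $Y$.

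I do not expect any single hard estimate to be the obstacle; the main delicate point is confirming that the chain hypothesis descends from $X$ to the slice $Y$, which is precisely why $\mathcal{C}_L$ is defined via chains rather than arbitrary smooth curves. Letting $x$ vary and choosing a smooth $Y \in |L|$ through each point then yields global generation of $M_{K_X + (n+1)L} \otimes L$ throughout $X$, and Corollary~\ref{n+1} bootstraps this conclusion to all $m \geq n+1$.
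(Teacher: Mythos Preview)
Your argument is correct and mirrors the paper's proof essentially step for step: reduce to $m=n+1$ via Corollary~\ref{n+1}, slice by a smooth $Y\in|L|$ through a given point, use $q(X)=0$ to get $H^1(M_{K_X+(n+1)L})=0$ for surjectivity on global sections, and invoke Lemma~\ref{resgg} together with adjunction, Kodaira vanishing, and Castelnuovo--Mumford regularity to descend to $Y$. The one point the paper makes explicit that you glossed over is that applying the inductive hypothesis to $(Y,L|_Y)$ requires $q(Y)=0$; this holds when $n\geq 3$ by the restriction sequence and Kodaira vanishing, while for $n=2$ the slice $Y$ is a curve in $\mathcal{C}_L$ and you use the curve hypothesis directly rather than the $n=1$ case of the proposition (which would force $Y\cong\mathbb{P}^1$).
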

\begin{proof}
    First, observe that it is enough to show that $M_{K_X+(n+1)L}\otimes L$ is globally generated by Corollary \ref{n+1}. To prove this, we use induction on $n$. Since $L$ is very ample, given any point $x\in X$, there is a smooth section $H\in|L|$ passing through $x$. Notice that $q(H)=0$ by restriction sequence and Kodaira vanishing when $n\geq 3$, whence to carry out the induction, it is enough to prove \begin{itemize}
        \item[(a)] the following map $$H^0(M_{K_X+(n+1)L}\otimes L)\to H^0(M_{K_X+(n+1)L}\otimes L\otimes\mathcal{O}_H)$$ arising from the restriction sequence is surjective, and 
        \item[(b)] If $M_{K_H+nL_H}\otimes L_H$ is globally generated, then so is $M_{K_X+(n+1)L}\otimes L\otimes\mathcal{O}_H$ where $L_H:=L|_H$.
    \end{itemize}
    
    We first prove (a). Using the defining exact sequence $$0\to M_{K_X+(n+1)L}\to H^0(K_X+(n+1)L)\otimes \mathcal{O}_X\to K_X+(n+1)L\to 0,$$ we see using $q(X)=0$ and the globally generation of $K_X +(n+1)L$ that $$H^1(M_{K_X+(n+1)L})=0,$$ whence the required surjection follows from the restriction sequence.
    
    We now prove (b) using Lemma \ref{resgg}. Recall that by adjunction, we have $(K_X+(n+1)L)\otimes\mathcal{O}_H=K_H+nL_H$. Notice that the restriction map $$H^0(K_X+(n+1)L)\to H^0(K_H+nL_H)$$is surjective by Kodaira vanishing. Lastly, the multiplication map $$H^0(K_H+nL_H)\otimes H^0(L_H)\to H^0(K_H+(n+1)L_H)$$ is surjective by Castelnuovo-Mumford regularity and Kodaira vanishing. The proof is now complete.
\end{proof}

The following is the main result of this subsection:

\begin{theorem}\label{thm1}
    Let $(X,L)$ be a polarized smooth regular projective variety of dimension $n$ with $L$ very ample.  Then $M_{K_X+mL}\otimes L$ is globally generated for all $m\geq n+1$.
\end{theorem}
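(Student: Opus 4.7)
The plan is to reduce to a curve statement via Proposition~\ref{prop1} and then resolve the curve case through a direct dimension count. By Proposition~\ref{prop1}, it suffices to show that $M_{K_C + 2L_C} \otimes L_C$ is globally generated for every curve $C \in \mathcal{C}_L$. The restriction $L_C := L|_C$ is very ample on $C$, because very ampleness is preserved under restriction to closed subvarieties; in particular, $L_C(-x)$ is base-point free for every $x \in C$.

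For the curve case, set $D := K_C + 2L_C$. I would verify global generation of $M_D \otimes L_C$ at each point $x \in C$ by establishing the identity
\[
h^0(M_D \otimes L_C) - h^0(M_D \otimes L_C(-x)) \;=\; h^0(D) - 1 \;=\; \rank(M_D \otimes L_C),
\]
which forces the evaluation map $H^0(M_D \otimes L_C) \to (M_D \otimes L_C)|_x$ to be surjective. The two dimensions on the left can be extracted from the defining short exact sequence of the syzygy bundle as the kernels of the multiplication maps
\[
m: H^0(D) \otimes H^0(L_C) \to H^0(D+L_C), \qquad m_x: H^0(D) \otimes H^0(L_C-x) \to H^0(D+L_C-x).
\]
The main computational step is to verify that $m$ and $m_x$ are both surjective; I would do this via Mumford's lemma, using that $L_C$ and $L_C(-x)$ are base-point free together with the vanishings $H^1(K_C+L_C) = 0$ and $H^1(K_C+L_C+x) = 0$ (both immediate from Serre duality, as the duals have negative degree). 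Given these surjectivities, the desired identity follows formally: since $L_C$ and $D+L_C = K_C+3L_C$ are globally generated (the latter because $\deg(K_C + 3L_C) \geq 2g(C)+1$ whenever $\deg L_C \geq 1$), we have $h^0(L_C)-h^0(L_C-x) = h^0(D+L_C)-h^0(D+L_C-x) = 1$, and subtracting the two kernel-dimension formulas yields $h^0(D) \cdot 1 - 1 = h^0(D) - 1$.

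The subtlety worth flagging is that $H^1(M_D \otimes L_C(-x))$ is in general nonzero, so an approach based on direct $H^1$-vanishing will not succeed. Global generation must instead be deduced from the fact that the natural map $H^1(M_D \otimes L_C(-x)) \to H^1(M_D \otimes L_C)$ is injective, which is precisely the content of the dimension identity above. A concrete instance where both $H^1$'s have dimension ten (yet $M_D \otimes L_C$ is still globally generated) is furnished by the canonical embedding of a smooth plane quartic, realized as a hyperplane section of a quartic K3 surface: here $L_C = K_C$ and one can check that the numerical identity holds, matching the general argument.
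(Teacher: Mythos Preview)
Your proposal is correct and takes essentially the same approach as the paper: reduce to the curve case via Proposition~\ref{prop1}, then deduce global generation from the injectivity of $H^1(M_D\otimes L_C(-x)) \to H^1(M_D\otimes L_C)$. You establish this injectivity via an explicit $h^0$ dimension count (using that both $m$ and $m_x$ are surjective), whereas the paper obtains it by a short diagram chase embedding both $H^1$'s into $H^0(D)\otimes H^1(-)$; the core inputs---surjectivity of $m_x$ via Castelnuovo--Mumford regularity for the base-point-free $L_C(-x)$, together with global generation of $L_C$---are identical.
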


\begin{proof} By Corollary \ref{n+1}, we may assume $m=n+1$.
Furthermore, we may assume $(X,L)\ncong(\mathbb{P}^n,\mathcal{O}_{\mathbb{P}^n}(1))$ and invoke Proposition \ref{prop1}. 
Let $$C=X_1\subset X_1\subset \cdots\subset X_n=X$$ be a chain as in the statement of Proposition \ref{prop1}, and we aim to show that $M_{K_C+2L_C}\otimes L_C$ is globally generated. Let $p\in C$ be a point. By the restriction sequence, it is enough to show that 
    \begin{equation}\label{tp1}
        \textrm{Ker}\left(H^1(M_{K_C+2L_C}\otimes (L_C(-p)))\to H^1(M_{K_C+2L_C}\otimes L_C)\right)=0.
    \end{equation}
    The map above fits into the following commutative diagram 
    \begin{equation}\label{diag1}
    \begin{tikzcd}
        H^1(M_{K_C+2L_C}\otimes (L_C(-p)))\arrow[r]\arrow[d] & H^0(K_C+2L_C)\otimes H^1(L_C(-p))\arrow[d]\\
        H^1(M_{K_C+2L_C}\otimes L_C)\arrow[r] & H^0(K_C+2L_C)\otimes H^1(L_C)
    \end{tikzcd}
    \end{equation}
    where the right vertical map is also induced by the restriction sequence.
    
    We use the exact sequence 
    \begin{equation}\label{mex1}
        0\to M_{K_C+2L_C}\otimes (L_C(-p))\to H^0(K_C+2L_C)\otimes (L_C(-p))\to (K_C+3L_C(-p))\to 0.
    \end{equation}
    Note that the evaluation map $$H^0(K_C+2L_C)\otimes H^0(L_C(-p))\to H^0(K_C+3L_C(-p))$$ is surjective. Indeed, this follows from Castelnuovo-Mumford regularity and Kodaira vanishing as $L_C(-p)$ is ample and globally generated since we assumed $(X,L)\ncong(\mathbb{P}^n,\mathcal{O}_{\mathbb{P}^n}(1))$. Consequently, the long exact sequence of \eqref{mex1} shows that the top horizontal map of \eqref{diag1} is injective. On the other hand, the restriction sequence shows that the right vertical map of \eqref{diag1} is also injective as $L_C$ is globally generated. Consequently the left vertical map of \eqref{diag1} is injective, which proves \eqref{tp1}.
\end{proof}

\subsection{Positivity of syzygy bundles for smooth fibrations over abelian varieties} We now study the positivity of the twisted syzygy bundles for smooth fibrations over abelian varieties. Although at first glance it might look too technical, however, it simplifies tremendously when the tangent bundle $\mathcal{T}_X$ of $X$ is nef, as we will see in the next subsection.

\begin{theorem}\label{theorem_nefness}
Let $\pi: X\to A$ be a smooth fibration over an abelian variety $A$. 
Let  $F_p$ be the fiber of $\pi$ over $p\in A$. 
Let $L$ and $H$ be ample line bundles on $X$ with the following properties:
    \begin{enumerate}
    \renewcommand{\labelenumi}{\textup{(\roman{enumi})}}
    \item $L$ is globally generated, GV, 
    and $\codim_{\hat{A}} \{\alpha \in \hat{A}\mid L+\pi^*P_{\alpha}\textrm{ is not globally generated } \} \geq 1$. 
    \item $H-nL$ is nef and big for $n:=\dim X$,
        \item 
        $H^i(L|_{F_p})=0$ for all $p\in A$ and $i>0$,
        \item there exists an ample line bundle $L_A$ on $A$ such that $H -\beta(L_A)\pi^* L_A$ is ample, and
        \item $M_{K_{F_p} +H|_{F_p}} \otimes L|_{F_p} $ is globally generated for any $p \in A$.
    \end{enumerate}
    Then $K_X+H$ is globally generated and  $M_{K_X+H}\otimes L$ is nef. 
\end{theorem}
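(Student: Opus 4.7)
The plan is to pass to the abelian base $A$ by considering the push-forward $\mathcal{E}:=\pi_{*}(K_{X}+H)$, and then to leverage the fiberwise hypothesis (v) together with the Pareschi--Popa and Jiang--Pareschi positivity machinery on $A$ (driven by condition (iv)) to establish both conclusions of the theorem.

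First I would set up the push-forward. Since $\pi$ is smooth and $H$ is $\pi$-ample (using (ii)), relative Kodaira vanishing yields $R^{i}\pi_{*}(K_{X}+H)=0$ for $i>0$, so $\mathcal{E}$ is locally free with fiber $H^{0}(F_{p},K_{F_{p}}+H|_{F_{p}})$ at each $p\in A$ (using $K_{X}|_{F_{p}}=K_{F_{p}}$ since $K_{A}\cong\mathcal{O}_{A}$). Condition (v) implicitly requires $K_{F_{p}}+H|_{F_{p}}$ to be globally generated, hence the co-unit $\pi^{*}\mathcal{E}\twoheadrightarrow K_{X}+H$ is surjective, and global generation of $K_{X}+H$ reduces to that of $\mathcal{E}$. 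For the latter I would invoke the Pareschi--Popa--Schnell / Chen--Jiang generic vanishing for push-forwards of $\omega_{X}$-type sheaves to abelian varieties, and upgrade it to $M$-regularity (hence global generation) by feeding in condition (iv) and the base-point freeness threshold $\beta(L_{A})$ via the Jiang--Pareschi cohomological rank functions.

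For the nefness of $M_{K_{X}+H}\otimes L$, I would factor the evaluation as $H^{0}(K_{X}+H)\otimes\mathcal{O}_{X}\twoheadrightarrow \pi^{*}\mathcal{E}\twoheadrightarrow K_{X}+H$, producing a short exact sequence
$$0\to \pi^{*}M_{\mathcal{E}}\to M_{K_{X}+H}\to M^{\mathrm{rel}}\to 0,$$
where $M_{\mathcal{E}}$ is the syzygy bundle of $\mathcal{E}$ on $A$ and $M^{\mathrm{rel}}:=\ker(\pi^{*}\mathcal{E}\twoheadrightarrow K_{X}+H)$ restricts to $M_{K_{F_{p}}+H|_{F_{p}}}$ on each fiber. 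Twisting by $L$, and using that over $\mathbb{C}$ extensions of nef sheaves are nef, it is enough to check that $\pi^{*}M_{\mathcal{E}}\otimes L$ and $M^{\mathrm{rel}}\otimes L$ are both nef on $X$. For the first, I would derive nefness of $M_{\mathcal{E}}$ on $A$ from a higher-rank analogue of Proposition~\ref{prop_JP_8.1} applied to $\mathcal{E}$---the $\mathbb{Q}$-twisted positivity of $\mathcal{E}$ coming precisely from (iv)---and then tensor with the nef line bundle $L$. For the second, the fiberwise statement is exactly (v); the horizontal direction is controlled by combining the fact that (i) and (iii) make $\pi_{*}L$ a GV sheaf on $A$ (via the Leray identity $V^{j}(L)=V^{j}(\pi_{*}L)$) with the ampleness of $L$ on $X$.

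The hard part will be handling $M^{\mathrm{rel}}\otimes L$ along curves $C\subset X$ that project to positive-dimensional subvarieties of $A$, together with extracting the correct nefness of $M_{\mathcal{E}}$ on $A$ from condition (iv): in both cases one must combine the base-point freeness threshold $\beta(L_{A})$, the GV hypothesis on $L$ from (i), and the fiberwise conclusion (v) in a careful interplay, since no single input alone governs the horizontal behavior of the relative syzygy bundle.
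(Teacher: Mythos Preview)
Your decomposition $0\to\pi^*M_{\mathcal{E}}\to M_{K_X+H}\to M^{\mathrm{rel}}\to 0$ is correct, and the first half of your plan (global generation of $K_X+H$ via $M$-regularity of $\mathcal{E}=\pi_*(K_X+H)$ using condition (iv)) matches what the paper does in its Claim~\ref{claim1.1}. The gap is in the nefness argument: neither of the two pieces you isolate is tractable with the given hypotheses. For $\pi^*M_{\mathcal{E}}\otimes L$, you invoke a ``higher-rank analogue of Proposition~\ref{prop_JP_8.1}'' for the syzygy bundle of the \emph{vector bundle} $\mathcal{E}$ on $A$; no such statement is available, and condition (iv) only bounds $\beta(L_A)$ against $H$, not against any twist that would control $M_{\mathcal{E}}$. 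For $M^{\mathrm{rel}}\otimes L$, fiberwise global generation (condition (v)) says nothing about curves dominating $A$, and you correctly flag this as unresolved---but it is not a technicality, it is the heart of the matter, and conditions (i) and (iii) do not supply the missing horizontal control in your setup.

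The paper avoids both problems by \emph{not} splitting $M_{K_X+H}$. Instead it pushes forward the twisted bundle $M_{K_X+H}\otimes L$ itself and shows directly that $\pi_*(M_{K_X+H}\otimes L)$ is GV on $A$ (Claim~\ref{claim1.2}): one tensors the defining sequence of $M_{K_X+H}$ by $L\otimes\pi^*P_\alpha$ and takes cohomology, so that the cohomological support loci of $\pi_*(M_{K_X+H}\otimes L)$ are bounded by those of $L$ together with the locus where $L\otimes\pi^*P_\alpha$ fails to be globally generated---this is exactly what condition (i) is designed for, and conditions (ii), (iii) feed into the vanishing of higher direct images and the Castelnuovo--Mumford surjectivity. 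Then condition (v) is used, via Lemma~\ref{resgg}, to show that $(M_{K_X+H}\otimes L)|_{F_p}$ is globally generated for every $p$, which gives the surjection $\pi^*\pi_*(M_{K_X+H}\otimes L)\twoheadrightarrow M_{K_X+H}\otimes L$ (Claim~\ref{claim1.3}). Nefness then follows because a quotient of the pullback of a nef bundle is nef. The point is that passing to $A$ \emph{after} twisting by $L$ keeps everything at the level of line-bundle cohomology on $X$, where conditions (i)--(iii) apply verbatim; your route forces you to analyze a vector-bundle syzygy on $A$ and a relative kernel on $X$, for which the hypotheses give no direct leverage.
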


\begin{proof} Note that $K_X+H$ is globally generated by Castelnuovo-Mumford regularity, Kawamata-Viehwag vanishing and the assumption (ii). 

Let us first prove the following two claims.

\begin{claim}\label{claim1.1}
The restriction map $H^0(K_X+ H) \to H^0((K_X+ H)|_{F_p})$ is surjective for all $p\in A$. 
\end{claim}

\begin{proof}
By $K_X|_{F_p} \cong K_{F_p}$ and Kodaira vanishing,  we have
\begin{itemize}
\item $ \pi_*(K_X+H)$ is locally free,
\item $R^i\pi_*(K_X+H)=0$ for all $ i>0$,
\item $ \pi_*(K_X+H) \otimes k(p) \cong H^0((K_X + H) |_{F_p} ) =H^0(K_{F_p} + H |_{F_p} ) $,
\item  $ H^0(\pi_*(K_X+H))= H^0(K_X+H)$.
\end{itemize}
Hence the restriction map $H^0(K_X+ H) \to H^0((K_X+ H)|_{F_p})$ coincides with the natural map
\[
H^0( \pi_*(K_X+H) ) \to  \pi_*(K_X+H) \otimes k(p).
\]
Thus we need to show that $ \pi_*(K_X+H)$ is globally generated.

By the condition (iv), we can take a rational number $t=\frac{a}{b} \geq  \beta(L_A)$  so that $ H -t \pi^* L_A$ is ample.
By \cite[Theorem 1.2 (1)]{MR4474904},
it suffices to show that $ \pi_*(K_X+H) \langle -t L_A \rangle $ is IT$_0$.
Consider the Cartesian square
    \[
    \begin{tikzcd}
        \widetilde{X}\arrow[r, "\widetilde{b_A}"]\arrow[d, swap, "\widetilde{\pi}"] & X\arrow[d, "\pi"]\\
    A\arrow[r, "b_A"] & A
    \end{tikzcd}
    \]
Since $b_A$ and $\widetilde{b_A}$ are \'etale, it holds that
\begin{align*}
b_A^*  \pi_*(K_X+H) \otimes (-ab L_A) \otimes P_\alpha &= \widetilde{\pi}_* \widetilde{b_A}^*(K_X+H)  \otimes (-ab L_A) \otimes P_\alpha  \\
&=  \widetilde{\pi}_* (K_{\widetilde{X}} + \widetilde{b_A}^*H)  \otimes (-ab L_A)\otimes P_\alpha  \\
&= \widetilde{\pi}_* (K_{\widetilde{X}} + \widetilde{b_A}^*H -ab\widetilde{\pi}^* L_A\otimes \widetilde{\pi}^*P_\alpha )
\end{align*}
for $\alpha \in \hat{A}$.
Since 
\[
\widetilde{b_A}^*H -ab\widetilde{\pi}^* L_A\otimes \widetilde{\pi}^*P_\alpha \equiv \widetilde{b_A}^*H -ab\widetilde{\pi}^* L_A = \widetilde{b_A}^*H - t \widetilde{\pi}^* b^2 L_A  \equiv \widetilde{b_A}^*H - t \widetilde{\pi}^*b_A^*L_A =\widetilde{b_A}^*(H -t \pi^* L_A) 
\]
is ample,
we have
\[
R^i \widetilde{\pi}_* (K_{\widetilde{X}} + \widetilde{b_A}^*H -ab\widetilde{\pi}^* L_A\otimes \widetilde{\pi}^*P_\alpha )=0\,\textrm{ for all }i>0.
\]
Thus, the Leray spectral sequence and Kodaira vanishing give
\begin{align*}
H^i(b_A^*  \pi_*(K_X+H) \otimes (-ab L_A) \otimes P_\alpha)&=H^i(\widetilde{\pi}_* (K_{\widetilde{X}} + \widetilde{b_A}^*H -ab\widetilde{\pi}^* L_A\otimes \widetilde{\pi}^*P_\alpha) ) \\
&\cong H^i(K_{\widetilde{X}} + \widetilde{b_A}^*H -ab\widetilde{\pi}^* L_A\otimes \widetilde{\pi}^*P_\alpha )=0
\end{align*}
for all $i >0$,
which implies that $b_A^*  \pi_*(K_X+H) \otimes (-ab L_A) $ is IT$_0$.
Hence $ \pi_*(K_X+H) \langle -t L_A \rangle $ is IT$_0$.
\end{proof}

\begin{claim}\label{claim1.2}
The sheaf $\pi_*(M_{K_X +H} \otimes L)$ is locally free and  nef.
\end{claim}

\begin{proof}
For a line bundle $L' =L + \pi^* P_{\alpha}$,
consider the exact sequence 
\[
0\to (M_{K_X+H}\otimes L')|_{F_p}\to H^0(K_X+H)\otimes L'|_{F_p}\to K_{F_p}+H|_{F_p}+L'|_{F_p}\to 0
\]
on $F_p$.
Now, the map $H^0(K_X+H)\otimes H^0(L'|_{F_p})\to H^0(K_{F_p}+H|_{F_p}+L'|_{F_p})$ is surjective since it factors as 
\begin{align}\label{eq_claim1.2}
    H^0(K_X+H)\otimes H^0(L'|_{F_p})\to H^0(K_{F_p}+H|_{F_p})\otimes H^0(L'|_{F_p})\to  H^0(K_{F_p}+H|_{F_p} + L'|_{F_p})
\end{align}
 and the first (resp.\ second) map is surjective by Claim \ref{claim1.1} (resp.\ Castelnuovo-Mumford regularity and Kodaira vanishing by the condition (ii) since $L'|_{F_p} =L|_{F_p}$ is globally generated. We note that $H|_{F_p} -(\dim F_p) L'|_{F_p} = (H-nL')|_{F_p} +(n -\dim F_p) L'|_{F_p}  $ is nef and big.). 
 Since $$H^i(L'|_{F_p})=H^i(L|_{F_p})=0\textrm{ for $i>0$}$$  by condition (iii), 
we conclude that 
\begin{align}\label{eq1.2}
H^i((M_{K_X+H}\otimes L')|_{F_p})=0\,\textrm{ for all }i>0.
\end{align} 
 Hence $$\pi_*(M_{K_X+H}\otimes L')=\pi_*(M_{K_X+H}\otimes L) \otimes \pi^* P_\alpha  \textrm{ is locally free, and } R^i\pi_*(M_{K_X+H}\otimes L')=0\textrm{ for }i>0.$$
Then the Leray spectral sequence shows that 
\[
H^i(\pi_*(M_{K_X+H}\otimes L)\otimes P_{\alpha}) =H^i( \pi_*(M_{K_X+H}\otimes L')) \cong H^i(M_{K_X+H}\otimes L') =H^i(M_{K_X+H}\otimes L\otimes\pi^*P_{\alpha}).
\]
 Now the exact sequence 
 \[
 0\to M_{K_X+H}\otimes L\otimes\pi^* P_{\alpha}\to H^0(K_X+H)\otimes L\otimes\pi^*P_{\alpha}\to K_X+H+L+\pi^*P_{\alpha}\to 0
\] 
shows that
\begin{itemize}
\setlength{\itemsep}{0mm}
\item $\left\{\alpha\mid H^1(M_{K_X+H}\otimes L\otimes \pi^*P_{\alpha})\neq 0\right\}$ is contained in 
\[
\left\{\alpha\mid L+\pi^*P_{\alpha}\textrm{ is not globally generated }\right\}\cup \left\{\alpha\mid H^1(L+\pi^*P_{\alpha})\neq0\right\}
\]
 by the condition (ii) and  Castelnuovo-Mumford regularity, and
\item $\left\{\alpha\mid H^i(M_{K_X+H}\otimes L\otimes \pi^*P_{\alpha})\neq 0\right\}= \left\{\alpha\mid H^i(L+\pi^*P_{\alpha})\neq0\right\}$ for $i \geq 2$.
\end{itemize}
        Consequently, by (i), the vector bundle $\pi_*(M_{K_X+H}\otimes L) $ is GV and hence nef by \cite[Theorem 4.1]{PaPo3}.
\end{proof}

\begin{claim}\label{claim1.3}
The natural map 
$\pi^* \pi_*(M_{K_X+H} \otimes L) \to M_{K_X+H} \otimes L $ is surjective.
\end{claim}

\begin{proof}
To show that  $\pi^* \pi_*(M_{K_X+H} \otimes L) \to M_{K_X+H} \otimes L $ is surjective,
it suffices to show that  $$\pi^* \pi_*(M_{K_X+H} \otimes L)|_{F_p} \to (M_{K_X+H} \otimes L)|_{F_p} \textrm{  is surjective for any $p \in A$. }$$
Since 
\[
\pi^* \pi_*(M_{K_X+H} \otimes L)|_{F_p} =   \pi_*(M_{K_X+H} \otimes L) \otimes k(p) \otimes \mathcal{O}_{F_p} =H^0((M_{K_X+H} \otimes L)|_{F_p} )\otimes \mathcal{O}_{F_p} 
\]
by \eqref{eq1.2},
it suffices to show that $(M_{K_X+H} \otimes L)|_{F_p}$ is globally generated.
By Lemma \ref{resgg},
it suffices to check
\begin{itemize}
\item $H^0(K_X +H) \to H^0((K_X +H)|_{F_p})  $ is surjective,
\item $M_{(K_X +H)|_{F_p}} \otimes L|_{F_p} =M_{K_{F_p} +H|_{F_p}} \otimes L|_{F_p} $ is globally generated,
\item $H^0( K_{F_p} +H|_{F_p} ) \otimes H^0(L|_{F_p} ) \to H^0( K_{F_p} +H|_{F_p} +L|_{F_p} ) $ is surjective.
\end{itemize}
The first one follows from Claim \ref{claim1.1} and
the second one is nothing but the condition (v).
The last one holds since this is the surjectivity of the second map in \eqref{eq_claim1.2}
for $P_\alpha=\mathcal{O}_A$, which we have already checked.
\end{proof}

We continue with the proof of Theorem \ref{theorem_nefness}. Since a pullback and a quotient of a nef vector bundle are nef,
it suffices to show  
\begin{itemize}
\item $\pi_*(M_{K_X+H}\otimes L)$ is a nef vector bundle, and
\item $\pi^* \pi_*(M_{K_X+H}\otimes L) \to M_{K_X+H}\otimes L $ is surjective.
\end{itemize}
Thus, the conclusion follows from Claims \ref{claim1.2} and \ref{claim1.3}.
\end{proof}

\subsection{Positivity of syzygy bundles for varieties with nef tangent bundles} We record an useful consequence of the results we have proven thus far. The main result of this subsection is the following:
\begin{corollary}\label{betacor}
    Let $X$ be a smooth irregular projective variety of dimension $n$ with nef tangent bundle $\mathcal{T}_X$. Assume that the fibers of the Albanese morphism of $X$ denoted by ${\rm alb_X}:X\to A$ are regular. Let $L$ be a very ample line bundle on $X$ and $m\geq n+1$ be an integer. If there exists an ample line bundle $L_A$ on $A$ such that $mL-\beta(L_A){\rm alb}_X^*(L_A)$ is ample, then $M_{K_X+mL}\otimes L$ is nef. 
\end{corollary}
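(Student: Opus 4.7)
The plan is to apply Theorem \ref{theorem_nefness} to the Albanese morphism $\pi = {\rm alb}_X \colon X \to A$, taking $H = mL$ and keeping $L = L$. I would verify each of the five hypotheses in turn. The numerical ones are immediate: condition (ii), which asks that $H - nL = (m-n)L$ be nef and big, follows from $m \geq n+1$ and the ampleness of $L$; condition (iv) is exactly the hypothesis on $mL - \beta(L_A)\pi^* L_A$.

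For the remaining conditions I would exploit the key fact that $-K_X = \det \mathcal{T}_X$ is nef (since $\mathcal{T}_X$ is nef and tensor/quotient operations preserve nefness), and analogously $-K_{F_p}$ is nef on every fiber $F_p$, whose tangent bundle is a quotient of $\mathcal{T}_X|_{F_p}$. Kodaira vanishing applied to $L \otimes \pi^* P_\alpha = K_X + \bigl((L - K_X) \otimes \pi^* P_\alpha\bigr)$ then gives $H^i(L \otimes \pi^* P_\alpha) = 0$ for all $\alpha \in \hat{A}$ and $i > 0$, because $(L - K_X) \otimes \pi^* P_\alpha$ is numerically equivalent to the ample line bundle $L - K_X$, and is therefore itself ample. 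Hence $L$ is IT$_0$, and a fortiori GV. Moreover, the locus $\{\alpha \in \hat{A} : L + \pi^* P_\alpha \text{ is not globally generated}\}$ is closed in $\hat{A}$ by upper-semicontinuity, and since $L$ itself is globally generated it misses $\alpha = 0$; thus it has codimension $\geq 1$. This verifies (i). The same Kodaira trick on each fiber also yields (iii): $H^i(L|_{F_p}) = H^i\bigl(K_{F_p} + (L|_{F_p} - K_{F_p})\bigr) = 0$ for $i > 0$.

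Finally, condition (v) is the content of Theorem \ref{thm1} applied to the polarized pair $(F_p, L|_{F_p})$: each fiber is smooth, regular by hypothesis, and $L|_{F_p}$ is very ample as the restriction of a very ample line bundle, so $M_{K_{F_p} + mL|_{F_p}} \otimes L|_{F_p}$ is globally generated for $m \geq \dim F_p + 1$, which holds since $m \geq n+1$. With all five hypotheses in place, Theorem \ref{theorem_nefness} delivers that $M_{K_X + mL} \otimes L$ is nef. I do not anticipate a genuine obstacle: the statement is essentially a packaging of Theorem \ref{theorem_nefness} together with Theorem \ref{thm1}, and the only mildly subtle point is the observation that the nefness of $-K_X$, combined with numerical triviality of the twists $\pi^* P_\alpha$, upgrades Kodaira vanishing into an unconditional vanishing statement for every Albanese-pullback twist of $L$ by a Poincar\'e line bundle.
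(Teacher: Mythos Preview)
Your approach is essentially identical to the paper's: verify hypotheses (i)--(v) of Theorem~\ref{theorem_nefness} with $\pi={\rm alb}_X$ and $H=mL$, using Theorem~\ref{thm1} for (v) and Kodaira vanishing (via nefness of $-K_X$) for (i) and (iii).

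One slip to fix: you write that $\mathcal{T}_{F_p}$ is a \emph{quotient} of $\mathcal{T}_X|_{F_p}$, but in the relative tangent sequence
\[
0 \to \mathcal{T}_{F_p} \to \mathcal{T}_X|_{F_p} \to (\pi^*\mathcal{T}_A)|_{F_p} \to 0
\]
it is a \emph{sub}bundle, and subbundles of nef bundles need not be nef. The conclusion you want, namely that $-K_{F_p}$ is nef, is nonetheless correct: since $\mathcal{T}_A$ is trivial, the sequence above gives $-K_{F_p}=\det\mathcal{T}_{F_p}=\det(\mathcal{T}_X|_{F_p})=(-K_X)|_{F_p}$, which is nef as the restriction of a nef line bundle. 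Alternatively, you may simply cite Theorem~\ref{structure}, as the paper does, to get that $\mathcal{T}_{F_p}$ itself is nef. With this correction your argument goes through; the closedness of the non-globally-generated locus, which you attribute to upper-semicontinuity, is justified more carefully in the paper via the universal line bundle on $X\times\hat{A}$, but your shortcut is standard once IT$_0$ (hence constancy of $h^0$) is established.
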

\begin{proof}
    For $p\in A$, note that the fiber $F_p$ of the Albanese map is smooth and irreducible with nef tangent bundle $\mathcal{T}_{F_p}$ (whence $-K_{F_p}$ is nef) by Theorem \ref{structure}. 
    We check the assumptions (i)-(v) in Theorem \ref{theorem_nefness} for $H=mL$.
    By assumption, (ii), (iv) are clear and (iii) holds by Kodaira vanishing.
    (v) follows from Theorem \ref{thm1} as $m\geq \dim F_p+2$.
    For (i), $L$ is IT$_0$ and hence GV by Kodaira vanishing.
    Furthermore, $\codim_{\hat{A}} \{\alpha \in \hat{A}\mid L+\pi^*P_{\alpha}\textrm{ is not globally generated } \} \geq 1$ holds.
    To see this, consider the line bundle
    \[
    \mathscr{L} = p_1^* L \otimes (\pi\times \id)^* \mathcal{P},
    \]
    on $X \times \hat{A}$, where $p_1 : X \times \hat{A} \to X $ is the natural projection
    and $\mathcal{P} $ is the normalized Poincar\'e bundle on $A \times \hat{A}$.
    For the natural projection $p_2 :X \times \hat{A} \to \hat{A} $, ${p_2}_* \mathscr{L}$ is locally free and $ {p_2}_* \mathscr{L} \otimes k(\alpha) =H^0(L \otimes \pi^* P_\alpha)$ for any $\alpha$
    since $L$ is IT$_0$. 
    Hence 
    \begin{align*}
     Z &\coloneqq \{ (x,\alpha) \in X \times \hat{A} \mid \text{$H^0(L \otimes P_\alpha) \to L \otimes P_\alpha \otimes k(x)$ is not surjective}\}\\
     &= \{ (x,\alpha) \in X \times \hat{A} \mid \text{$p_2^* {p_2}_* \mathscr{L} \to \mathscr{L} \otimes k(x,\alpha) $ is not surjective}\}
    \end{align*}
    is a closed subset of $X \times \hat{A}$.
    Thus $p_2(Z) =\{\alpha \mid \text{$ L \otimes P_\alpha$ is not globally generated }\} \subset \hat{A}$ is a closed subset. Since $L$ is globally generated, 
    the origin $\hat{o} \in \hat{A}$ is not contained in $p_2(Z)$ and hence $ \codim_{\hat{A}} p_2(Z) \geq 1 $.
\end{proof}

\begin{remark}\label{rmk-va}
{\em 
In Corollary \ref{betacor},
it suffices to assume that $L|_{F_p}$ is very ample for all $p\in A$, rather than $L$ itself being very ample.}
\end{remark}

\section{From positivity of syzygy bundles to hyperbolicity}\label{transition}

Let $X$ be a smooth projective variety of dimension $n$.
Let $\mathcal{E}$ be a globally generated line bundle on $X$, and let $ L$ be an ample line bundle on $X$.
We are interested in studying the curves contained in a very general section of $\mathcal{E}$.
Setting $B_1 = H^0(X,\mathcal{E})$ and letting $\mathcal{Y}_1 \to B_1$ denote the universal hypersurface, we will study \textit{versal} families of curves in $\mathcal{Y}_1 \to B_1$. 
The terminology, used in \cite{ClRa04}, suggests that a curve in such a family deforms with every local deformation of the hypersurface $Y$ in $B_1$.
The following setup was already considered in some of the papers \cite{Ein88, Voi96, Voi98, Pac04, CoRi1, Yeo24} cited in the introduction; here, we will roughly follow the presentation in \cite{CoRi2, MY24}.

\begin{construction}\label{construction}
{\em
Let $B_1 = H^0(X,\mathcal{E})$, and let $\mathcal{Y}_1\to B_1$ be the universal hypersurface. 
Suppose a general section $Y \subset X$ of $\mathcal{E}$ contains a curve of geometric genus $g$ and degree $e$ with respect to $ L$.
Let $\psi:\mathcal{H}\rightarrow B_1$ be the relative Hilbert scheme parametrizing curves of geometric genus $g$ and degree $e$ with respect to $ L$, which is a dominant map.
Let $\mathcal{Y}_2 := \mathcal{Y}_1\times_{B_1} \mathcal{H}$, and let $\mathcal{C}_1\rightarrow \mathcal{H}$ denote the universal curve, so that we have 
\[
\xymatrix{
\mathcal{C}_1\, \ar@{^{(}->}[rr] \ar[dr] && \mathcal{Y}_2 \ar[dl]\\
 &\mathcal{H}&
}.
\]
We replace $\mathcal{H}$ with a subvariety $\mathcal{H}_1 \subset \mathcal{H}$ such that $\psi\vert_{\mathcal{H}_1}:\mathcal{H}_1\rightarrow B_1$ is \'etale, and denote the pullback of the above diagram over $\mathcal{H}_1$ by
\[
\xymatrix{
\mathcal{C}_2\, \ar@{^{(}->}[rr] \ar[dr] && \mathcal{Y}_3. \ar[dl]\\
 &\mathcal{H}_1&
}
\]
We take a resolution of $\mathcal{C}_2 \to \mathcal{H}_1$ and, after possibly restricting $\mathcal{H}_1$ to an open subset $B \subset \mathcal{H}_1$, obtain a smooth family $\mathcal{C} \to B$. 
This restriction is also to ensure that the pullback $\mathcal{Y}_3 \to \mathcal{H}_1$ to $\mathcal{Y} \to B$ is a smooth family.

To summarize, we obtain the following diagram
\[
\xymatrix{
\mathcal{C}\, \ar[rr]^-{g} \ar[dr] && \mathcal{Y} \ar[dl]\\
 &B &
}
\]
where $g\colon \mathcal{C} \to \mathcal{Y}$ is a generically injective map.
We denote by $\pi_1:\mathcal{Y} \rightarrow B$ and $\pi_2: \mathcal{Y} \rightarrow X$ the natural projection maps.
For each $b\in B$, we denote the restriction of $g$ over $b$ by $g_b:C_b\to Y_b$, and the inclusion of the fiber of $\mathcal{C}\to B$ over $b\in B$ by $\iota_b:C_b\to \mathcal{C}.$
}

\end{construction}

\begin{notation}\label{notation}
{\em
We introduce some notation regarding relative tangent and normal sheaves coming from Construction~\ref{construction}:
\begin{enumerate}
\item $N_{g/\mathcal{Y}}$ is the normal sheaf for $g:\mathcal{C}\to \mathcal{Y},$ defined as the quotient in the short exact sequence:
\begin{equation*}
    0 \to \mathcal{T}_\mathcal{C} \to g^*\mathcal{T}_\mathcal{Y} \to N_{g/\mathcal{Y}}\to 0.
\end{equation*}
\item $\mathcal{T}_{\mathcal{Y}/X}$ denotes the kernel of $\mathcal{T}_\mathcal{Y} \to \pi_2^*\mathcal{T}_X$, which is surjective since $\mathcal{E}$ is globally generated.
\item We denote the kernel and image of $\mathcal{T}_\mathcal{C} \to g^*\pi_2^*\mathcal{T}_X$ by $\mathcal{T}_{\mathcal{C}/X}$ and $\mathcal{Q}_1$, respectively.
\item $\mathcal{Q}_2$ denotes the cokernel of the natural inclusion $\mathcal{T}_{\mathcal{C}/X}\to g^*\mathcal{T}_{\mathcal{Y}/X}$.
\item $\mathcal{Q}_3$ denotes the cokernel of the natural inclusion $\mathcal{Q}_2\to N_{g/\mathcal{Y}}.$
\end{enumerate}
}
\end{notation}

To summarize, the above maps and sheaves fit into the following exact commutative diagram:

\begin{equation}\label{equation:setup}
\begin{tikzcd}
& &0 \arrow[d] &0 \arrow[d] &0 \arrow[d] &\\
&0 \arrow[r] &\mathcal{T}_{\mathcal{C}/X} \arrow[r] \arrow[d] &g^*\mathcal{T}_{\mathcal{Y}/X} \arrow[r] \arrow[d] &\mathcal{Q}_2 \arrow[r] \arrow[d] &0 \\
&0 \arrow[r] &\mathcal{T}_{\mathcal{C}} \arrow[r] \arrow[d] &g^*\mathcal{T}_{\mathcal{Y}} \arrow[r] \arrow[d] &N_{g/\mathcal{Y}} \arrow[r] \arrow[d] &0 \\
&0 \arrow[r] &\mathcal{Q}_1 \arrow[r] \arrow[d] &g^*\pi_2^*\mathcal{T}_{X} \arrow[r] \arrow[d] &\mathcal{Q}_3 \arrow[r] \arrow[d] &0 \\
& &0 &0  &0 &
\end{tikzcd}    
\end{equation}

\begin{remark}\label{remark:almost-homogeneous}
{\em 
When $X$ contains a Zariski-open set $X_0$ admitting a transitive group action by an algebraic group and for a curve not contained in $X\setminus X_0$, the above construction can be made so that 
$\rank\mathcal{Q}_1=n$, $\rank\mathcal{Q}_2=n-2$, and $\mathcal{Q}_3$ is torsion with $\iota_b^*\mathcal{Q}_3$ supported on $C_b\setminus X_0$ for each $b\in B$. 
When $X$ is homogeneous, we may assume that $\mathcal{Q}_1\simeq g^*\pi_2^*\mathcal{T}_X$, $\mathcal{Q}_2\simeq N_{g/\mathcal{Y}}$ and $\mathcal{Q}_3$ is zero.
(see, e.g., \cite[\S2.1]{CoRi2} and \cite[Construction~2.3]{MY24}).
}
\end{remark}

We recall the following

\begin{proposition}[{c.f.~\cite[Proposition 2.1]{CoRi2}}]\label{proposition:cori}
In the setting of Construction~\ref{construction} and Notation~\ref{notation},
the following statements hold:
\begin{enumerate}
    \item $\pi_2^*M_\mathcal{E}\simeq \mathcal{T}_{\mathcal{Y}/X}.$
    \item For every $b\in B$, $N_{g/\mathcal{Y}}\vert_{C_b} \simeq N_{g_b/Y_b}$.
\end{enumerate}
\end{proposition}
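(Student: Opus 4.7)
\textbf{Plan for Proposition~\ref{proposition:cori}.}

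For part (1), the plan is to analyze first the universal hypersurface $\mathcal{Y}_1 \subset X \times B_1$ and then transfer the result to $\mathcal{Y}$. By construction, $\mathcal{Y}_1$ is cut out from $X \times B_1 = X \times H^0(X,\mathcal{E})$ by the tautological section $\widetilde{s}$ of $p_1^*\mathcal{E}$, where $\widetilde{s}(x,s) = s(x)$. For the projection $\pi_2\colon \mathcal{Y}_1\to X$, the scheme-theoretic fiber over $x\in X$ is the linear subspace $\ker(\mathrm{ev}_x\colon H^0(X,\mathcal{E})\to \mathcal{E}_x) \subset B_1$. Since $\mathcal{E}$ is globally generated, the evaluation map is surjective fiberwise and the defining sequence of $M_\mathcal{E}$ identifies this kernel fiberwise with $M_\mathcal{E}\vert_x$. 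Because $\mathcal{Y}_1\to X$ is a linear (hence affine) fibration whose fiber is canonically this kernel, one obtains globally $\mathcal{T}_{\mathcal{Y}_1/X} \simeq \pi_2^*M_\mathcal{E}$. To descend to $\mathcal{Y}$, note that in Construction~\ref{construction} the space $\mathcal{Y}$ is obtained from $\mathcal{Y}_1$ by pullback along the \'etale map $\mathcal{H}_1\to B_1$ followed by open restrictions; since both operations are smooth (indeed \'etale on the $B_1$-factor and open on $\mathcal{H}_1$), the relative tangent sheaf $\mathcal{T}_{\mathcal{Y}/X}$ is the pullback of $\mathcal{T}_{\mathcal{Y}_1/X}$ and so remains isomorphic to $\pi_2^*M_\mathcal{E}$.

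For part (2), the idea is to restrict the defining sequence $0\to \mathcal{T}_\mathcal{C}\to g^*\mathcal{T}_\mathcal{Y}\to N_{g/\mathcal{Y}}\to 0$ to $C_b$ and recognize each term. Since both $\mathcal{C}\to B$ and $\mathcal{Y}\to B$ are smooth, one has short exact sequences
\begin{equation*}
0\to \mathcal{T}_{\mathcal{C}/B}\to \mathcal{T}_\mathcal{C}\to \pi_{\mathcal{C}}^*\mathcal{T}_B\to 0,\qquad 0\to \mathcal{T}_{\mathcal{Y}/B}\to \mathcal{T}_\mathcal{Y}\to \pi_1^*\mathcal{T}_B\to 0,
\end{equation*}
whose restrictions to $C_b$ and $Y_b$ are again exact. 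Pulling back the second sequence along $g$ and restricting to $C_b$ (and using that $g$ is a map over $B$ so the induced map on the trivial $\mathcal{T}_B$-factors is the identity), a diagram chase or an application of the snake lemma identifies the restriction of the normal sheaf sequence with
\begin{equation*}
0\to \mathcal{T}_{C_b}\to g_b^*\mathcal{T}_{Y_b}\to N_{g_b/Y_b}\to 0,
\end{equation*}
yielding the desired isomorphism $N_{g/\mathcal{Y}}\vert_{C_b}\simeq N_{g_b/Y_b}$.

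The main subtle point will be the second part: the map $g\colon \mathcal{C}\to \mathcal{Y}$ is only generically injective and the fiber $C_b$ may not embed in $Y_b$, so one must work with the cokernel definition of the normal sheaf rather than with an honest normal bundle. However, the existence of a smooth resolution for $\mathcal{C}$ and the smoothness of $\mathcal{C}\to B$, $\mathcal{Y}\to B$, built into Construction~\ref{construction}, ensure that the relative tangent sequences restrict exactly, so the snake-lemma argument goes through without loss of generality. For part (1), the only care needed is to keep track that the various modifications defining $\mathcal{Y}$ from $\mathcal{Y}_1$ are smooth over $X$, so the relative tangent sheaf is stable under pullback; this is where the \'etaleness of $\mathcal{H}_1\to B_1$ in Construction~\ref{construction} is used.
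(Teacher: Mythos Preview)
The paper does not give its own proof of this proposition; it is stated with a citation to \cite[Proposition 2.1]{CoRi2} and used as a black box. Your outline is correct and is essentially the standard argument one finds in the cited reference: for (1), recognizing $\mathcal{Y}_1\to X$ as the total space of $M_{\mathcal{E}}$ and then pulling back along the \'etale base change $B\to B_1$; for (2), comparing the relative tangent sequences of $\mathcal{C}/B$ and $\mathcal{Y}/B$ via the snake lemma. One small point worth making explicit in your write-up of (2): the identification $N_{g/\mathcal{Y}}|_{C_b}=\mathrm{coker}\bigl(\mathcal{T}_{\mathcal{C}}|_{C_b}\to g^*\mathcal{T}_{\mathcal{Y}}|_{C_b}\bigr)$ holds by right-exactness of restriction alone, so you do not need the restricted sequence to be left-exact; the snake lemma then matches this cokernel with $N_{g_b/Y_b}$ once you note that $dg_b\colon \mathcal{T}_{C_b}\to g_b^*\mathcal{T}_{Y_b}$ is injective (a nonzero map from a line bundle on a smooth curve).
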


\subsection{Almost homogeneous varieties.}

In this subsection $X$ denotes a Gorenstein canonical projective variety containing a Zariski-open set admitting a transitive group action by an algebraic group.
The goal of this subsection is to prove:

\begin{proposition}\label{proposition:almost-homogeneous}
    Let $X$ be a Gorenstein canonical projective variety of dimension $n$ containing a Zariski-open set $X_0$ admitting a transitive group action by an algebraic group $G$, with a resolution $\rho:X'\to X$ that is an isomorphism over $X_0$.
    Let $Y\subset X$ be a hypersurface that is general in the linear system of a globally generated line bundle $\mathcal{E}$ and $f: C\to Y$ be a generically injective map from a smooth projective curve $C$ such that $f(C)\not\subset X\setminus X_0$.
    Denote by $\mathcal{E}':=\rho^*\mathcal{E}$, $Y'\subset X'$ the closure of $Y\cap X_0$ in $X'$, and $f':C\to Y'$ the lift of $f$ to its closure in $X'.$
    Apply Construction~\ref{construction} and Remark~\ref{remark:almost-homogeneous} with respect to $X'$, $\mathcal{E}'$ so that $\mathcal{H}$ contains a point $[f']$ corresponding to $f' : C \to Y'$, and $B$ and $[f']$ are contained in a common irreducible component of $\mathcal{H}$.
    Then for some $b\in B$, there is a surjection 
    \[ M_{\mathcal{E}'}\vert_{C_b} \to Q, \]
    where $Q$ is a sheaf on $C_b$ with $\deg N_{f/Y}\geq \deg Q.$
\end{proposition}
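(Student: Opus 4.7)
\emph{Proof plan.} The plan is to read off the surjection $M_{\mathcal{E}'}|_{C_b}\twoheadrightarrow Q$ directly from diagram \eqref{equation:setup} applied to the pair $(X',\mathcal{E}')$, and then to bound $\deg Q$ by a chain of degree comparisons terminating at $\deg N_{f/Y}$.

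First, I would apply Construction~\ref{construction} together with Remark~\ref{remark:almost-homogeneous} to $(X',\mathcal{E}')$. Since $\rho$ is an isomorphism over $X_0$, the open subset $X_0\subset X'$ still carries the transitive $G$-action, and Remark~\ref{remark:almost-homogeneous} guarantees that in \eqref{equation:setup} the sheaf $\mathcal{Q}_3$ is torsion with $\iota_b^*\mathcal{Q}_3$ supported on $C_b\setminus X_0$. Because $[f']$ and $B$ lie in a common irreducible component of $\mathcal{H}$, and since the numerical class of the image cycle is locally constant in a flat family of curves on the smooth variety $X'$, I may shrink $B$ so that for every $b\in B$ the $1$-cycle $(\pi_2\circ g_b)_*[C_b]$ is numerically equivalent to $f'_*[C]$ in $X'$; in particular $C_b\not\subset X'\setminus X_0$.

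Next, by Proposition~\ref{proposition:cori}(1) one has $\mathcal{T}_{\mathcal{Y}/X'}\cong \pi_2^*M_{\mathcal{E}'}$, so restricting the top row of \eqref{equation:setup} to $C_b$ gives a surjection $M_{\mathcal{E}'}|_{C_b}\twoheadrightarrow \mathcal{Q}_2|_{C_b}$. Composing with the injection $\mathcal{Q}_2\hookrightarrow N_{g/\mathcal{Y}}$ and using Proposition~\ref{proposition:cori}(2) to identify $N_{g/\mathcal{Y}}|_{C_b}\cong N_{g_b/Y_b}$, I would set $Q$ to be the image of $\mathcal{Q}_2|_{C_b}$ in $N_{g_b/Y_b}$. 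Then $M_{\mathcal{E}'}|_{C_b}\twoheadrightarrow Q$ by construction, while the cokernel $N_{g_b/Y_b}/Q$ is a quotient of the torsion sheaf $\iota_b^*\mathcal{Q}_3$, so it has non-negative degree; therefore $\deg Q\leq \deg N_{g_b/Y_b}$.

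The final step is the inequality $\deg N_{g_b/Y_b}\leq \deg N_{f/Y}$. Adjunction on $X'$ gives $\deg N_{g_b/Y_b} = -(K_{X'}+\mathcal{E}')\cdot (\pi_2\circ g_b)_*[C_b] + 2g-2$, which depends only on the numerical class of $(\pi_2\circ g_b)_*[C_b]$; by the choice of $B$ this equals $-(K_{X'}+\mathcal{E}')\cdot f'_*[C] + 2g-2 = \deg N_{f'/Y'}$ (computed on a desingularization of $Y'$ if necessary). Since $Y$ is Gorenstein canonical, the discrepancy decomposition on a common smooth model reads $K_{Y'} = \rho|_{Y'}^* K_Y + E$ with $E\geq 0$ and $\Supp E\subset X'\setminus X_0$; because $f'(C)\not\subset \Supp E$, one has $E\cdot f'(C)\geq 0$, giving
\[
-K_{Y'}\cdot f'(C) \;=\; -K_Y\cdot f(C) - E\cdot f'(C) \;\leq\; -K_Y\cdot f(C),
\]
and hence $\deg N_{f'/Y'}\leq \deg N_{f/Y}$. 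I expect the main obstacle to be precisely this last discrepancy-based step: making it rigorous when both $Y$ and $Y'$ may be singular forces one to work on a common smooth model and to verify that the $N_{f/Y}$ appearing in the statement agrees with the degree computed by the adjunction formula there. Everything upstream of it is a formal unpacking of \eqref{equation:setup}, Proposition~\ref{proposition:cori}, and Remark~\ref{remark:almost-homogeneous}.
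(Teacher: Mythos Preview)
Your argument follows the same route as the paper's: extract the surjection $M_{\mathcal{E}'}|_{C_b}\twoheadrightarrow Q$ from diagram~\eqref{equation:setup} via Proposition~\ref{proposition:cori}, use that $\iota_b^*\mathcal{Q}_3$ is torsion to get $\deg Q\le \deg N_{g_b/Y'_b}$, and then compare with $\deg N_{f/Y}$ by deformation invariance along the family plus a discrepancy argument. The only real difference is in this last discrepancy step, and it is precisely the ``obstacle'' you flag.

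You route the comparison through $K_{Y'}$ versus $K_Y$, which forces you to assert that $Y$ has canonical singularities and to pass to a common smooth model of $Y$ and $Y'$; neither of these is given, and while the first can be arranged by a Bertini-type argument it is unnecessary work. The paper sidesteps this by comparing one dimension up. Since $Y'_b$ is smooth (by the choice of $B$) and $Y$ is a Cartier divisor in the Gorenstein variety $X$, adjunction gives $K_{Y'_b}=(K_{X'}+\mathcal{E}')|_{Y'_b}$ and $K_Y=(K_X+\mathcal{E})|_Y$, so
\[
\deg N_{g_b/Y'_b}=2g-2-(K_{X'}+\mathcal{E}')\cdot C_b=2g-2-(K_{X'}+\mathcal{E}')\cdot f'_*[C].
\]
Now the canonical hypothesis on $X$ (not $Y$) gives $K_{X'}=\rho^*K_X+E$ with $E\ge 0$ effective and supported on the exceptional locus; since $f'(C)\not\subset\Supp E$ one obtains $(K_{X'}+\mathcal{E}')\cdot f'_*[C]\ge (K_X+\mathcal{E})\cdot f_*[C]$, hence $\deg N_{g_b/Y'_b}\le \deg N_{f/Y}$. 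This uses only that $X$ is Gorenstein canonical, and your flagged obstacle evaporates.
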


\begin{proof}
By Proposition~\ref{proposition:cori}, \eqref{equation:setup} restricts to the following exact diagram of sheaves on $C_b$ for any $b\in B$:

\begin{equation}\label{equation:setup-restricted}
\begin{tikzcd}
&0 \arrow[r] &g_b^*\pi_2^*M_{\mathcal{E}'} \arrow[r] \arrow[d] &g_b^*\mathcal{T}_{\mathcal{Y}} \arrow[r] \arrow[d] &g_b^*\pi_2^*\mathcal{T}_{X'} \arrow[r] \arrow[d] &0 \\
&&\iota_b^*\mathcal{Q}_2 \arrow[r, "\alpha_b"] \arrow[d] &N_{g_b/Y_b'} \arrow[r] \arrow[d] &\iota_b^*\mathcal{Q}_3 \arrow[r] \arrow[d] &0 \\
& &0 &0  &0 &
\end{tikzcd}    
\end{equation}

Since $\iota_b^*\mathcal{Q}_3 $ is torsion, we have $\deg \iota_b^*\mathcal{Q}_3 \geq0.$
Therefore,
$\deg N_{g_b/Y'_b}\geq \deg Q$, where $Q$ denotes the image sheaf of $\alpha_b.$
By \eqref{equation:setup-restricted}, the composite map $g_b^*\pi_2^*M_{\mathcal{E}'} \to \iota_b^*\mathcal{Q}_2  \to Q $ is surjective.
Since $X$ is Gorenstein canonical,
\[\deg N_{g_b/Y'_b} {= 2g(C_b)-2-(K_{X'}+\mathcal{E}')\cdot C_b}= 2g(C)-2-(K_{X'}+\mathcal{E}')\cdot C \leq 2g(C)-2-(K_X+\mathcal{E})\cdot C = \deg N_{f/Y}.\]
The proof is now complete.
\end{proof}

\subsection{Varieties with (pseudo or almost) nef tangent bundle.}
In this subsection, we prove a corresponding result for varieties whose tangent bundle is (pseudo or almost) nef, in the sense of Definitions \ref{def:gen-nef}, \ref{pnef}.

\begin{proposition}\label{proposition:nef-tangent}
    Let $X$ be a Gorenstein canonical projective variety of dimension $n$ whose tangent sheaf $\mathcal{T}_X$ is nef outside of $\cup_{i\in\mathbb{N}}Z_i$ where $Z_i\subsetneq X$ are proper closed subvarieties, with a resolution $\rho:X'\to X$ that is an isomorphism over $X\setminus \rm Sing(X)$.
    Let $Y\subset X$ be a hypersurface that is general in the linear system of a globally generated line bundle $\mathcal{E}$ and $f: C\to Y$ a generically injective map from a smooth projective curve $C$ such that $f(C)\not\subset\bigcup_{i\in\mathbb{N}}Z_i\cup{\rm Sing}(X)$.
Denote by $\mathcal{E}':=\rho^*\mathcal{E}$, $Y'\subset X'$ the closure of $Y\cap X_0$ in $X'$, and $f':C\to Y'$ the lift of $f$ to its closure in $X'.$
Apply Construction~\ref{construction} with respect to $X'$, $\mathcal{E}'$ so that $\mathcal{H}$ contains a point $[f']$ corresponding to $f' : C \to Y'$, and $B$ and $[f']$ are contained in a common irreducible component of $\mathcal{H}$.
    Then for some $b\in B$, there is a surjection 
    \[ M_{\mathcal{E}'}\vert_{C_b} \to Q, \]
    where $Q$ is a sheaf with $\deg N_{f/Y}\geq \deg Q.$
\end{proposition}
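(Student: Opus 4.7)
The plan is to mimic the proof of Proposition~\ref{proposition:almost-homogeneous}, replacing the torsion property of $\mathcal{Q}_3$ (which came from the almost-homogeneous setting) with a non-negative degree bound for $\iota_b^*\mathcal{Q}_3$ deduced from the almost nefness of $\mathcal{T}_X$. Once this degree bound is in place, the remainder of the argument — extracting the image $Q$ of $\alpha_b$, using Proposition~\ref{proposition:cori}(1) to produce a surjection from $M_{\mathcal{E}'}|_{C_b}$, and comparing $\deg N_{f/Y}$ with $\deg N_{g_b/Y_b'}$ via Gorenstein-canonical adjunction — carries over essentially verbatim.

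First I would choose $b\in B$ so that the integral image curve $C_b':=(\pi_2\circ g_b)(C_b)\subset X'$ is not contained in the exceptional locus $E_\rho$ of $\rho$ and satisfies $\rho(C_b')\not\subset\bigcup_{i\in\mathbb{N}} Z_i\subset X$. The marked point $[f']\in\mathcal{H}$ itself has both properties by the hypotheses on $f$ together with the fact that $\rho$ is an isomorphism over $X\setminus\operatorname{Sing}(X)$. For each $i$, the locus $\{b\in B : \rho(C_b')\subset Z_i\}$ is Zariski closed in $B$ and proper in any irreducible component sharing one with $[f']$, and analogously for $E_\rho$; hence a very general $b\in B$ (a countable intersection of Zariski-open dense subsets, which is nonempty over $\mathbb{C}$ by Baire category) satisfies both conditions. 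For such a $b$, the almost-nef hypothesis applied to the integral curve $\rho(C_b')\subset X$ shows that $\mathcal{T}_X|_{\rho(C_b')}$ is nef; pulling back along $C_b\to C_b'\xrightarrow{\sim}\rho(C_b')$ (the second map being an isomorphism because $C_b'\cap E_\rho=\emptyset$) yields that $g_b^*\pi_2^*\mathcal{T}_{X'}|_{C_b}$ is a nef vector bundle on the smooth curve $C_b$. Restricting the third row of~\eqref{equation:setup} to $C_b$ then presents $\iota_b^*\mathcal{Q}_3$ as a quotient of a nef vector bundle on a smooth curve, so $\deg \iota_b^*\mathcal{Q}_3\geq 0$.

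With this in hand, define $Q$ to be the image of the natural map $\alpha_b:\iota_b^*\mathcal{Q}_2\to N_{g_b/Y_b'}$ coming from the third column of~\eqref{equation:setup} restricted to $C_b$; the degree bound yields $\deg N_{g_b/Y_b'}\geq \deg Q$, and the composite $M_{\mathcal{E}'}|_{C_b}\simeq g_b^*\pi_2^*M_{\mathcal{E}'}\twoheadrightarrow \iota_b^*\mathcal{Q}_2\twoheadrightarrow Q$ furnishes the required surjection via Proposition~\ref{proposition:cori}(1). Finally, writing $K_{X'}=\rho^*K_X+F$ with $F\geq 0$ (by the Gorenstein canonical hypothesis) and using $\mathcal{E}'=\rho^*\mathcal{E}$, adjunction gives $\deg N_{f/Y}\geq \deg N_{g_b/Y_b'}$ exactly as at the end of the previous proposition. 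The main obstacle I anticipate is in the selection of $b$ in the first step: verifying that each bad locus $\{b\in B : \rho(C_b')\subset Z_i\}$ is Zariski closed and proper in the relevant component is the only point at which the countable nature of the almost-nef hypothesis enters nontrivially, and is really the only substantive departure from the argument of Proposition~\ref{proposition:almost-homogeneous}.
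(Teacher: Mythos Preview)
Your approach is essentially the paper's: restrict diagram~\eqref{equation:setup} to $C_b$, use nefness of $g_b^*\pi_2^*\mathcal{T}_{X'}$ to get $\deg\iota_b^*\mathcal{Q}_3\geq 0$, take $Q=\operatorname{im}\alpha_b$, and finish with the Gorenstein--canonical comparison $\deg N_{f/Y}\geq\deg N_{g_b/Y_b'}$. The paper's proof is a three-line sketch that simply asserts the nefness of $g_b^*\pi_2^*\mathcal{T}_{X'}$ on $C_b$; your additional paragraph on selecting a very general $b$ via a Baire-category argument is exactly the justification the paper omits.

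One small slip: you argue that the bad loci $\{b:C_b'\subset E_\rho\}$ are closed and avoided by $[f']$, which only yields $C_b'\not\subset E_\rho$, yet you then write ``$C_b'\xrightarrow{\sim}\rho(C_b')$ \dots because $C_b'\cap E_\rho=\emptyset$''. The hypothesis $f(C)\not\subset\operatorname{Sing}(X)$ does not force $f'(C)\cap E_\rho=\emptyset$, so disjointness is not available at $[f']$ and hence not at a general $b$ either. This does not break the argument---non-containment suffices to pull back nefness of $\mathcal{T}_X$ along $\rho\circ\pi_2\circ g_b$ and identify it with $g_b^*\pi_2^*\mathcal{T}_{X'}$ generically on $C_b$---but the sentence as written overstates what you have shown. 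The paper is equally silent on this point.
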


\begin{proof}
As in the proof of Proposition~\ref{proposition:almost-homogeneous}, we have the restricted exact diagram \eqref{equation:setup-restricted} of sheaves on $C_b.$
Since $g_b^*\pi_2^*\mathcal{T}_{X'}$ is nef on $C_b$, $\deg \iota_b^*\mathcal{Q}_3  \geq 0$, and so $\deg N_{g_b/Y'_b}\geq \deg Q$, where $Q$ denotes the image sheaf of $\alpha_b.$
As in Proposition~\ref{proposition:almost-homogeneous}, $X$ being Gorenstein canonical implies that $\deg N_{f/Y}\ge\deg N_{g_b/Y'_b}.$
\end{proof}

\subsection{The fundamental theorems.}
In this subsection, we prove the basic theorems that we use to prove our main results. The first one is for varieties with (pseudo, almost) nef tangent bundle.

\begin{theorem}\label{thm:hyperbolicity-of-N+L}
    Let $X$ be a Gorenstein canonical projective variety of dimension $n\geq 3$ with $\mathcal{T}_X$ nef outside of $\bigcup_{i\in\mathbb{N}}Z_i$ where $Z_i\subsetneq X$ are proper closed subvarieties. 
    Let $\mathcal{E}$ be a globally generated line bundle and $L$ be an ample line bundle on $X$.
    Assume that there exists a rational number $\delta>0$ such that the following conditions are satisfied:
    \begin{enumerate}
    \renewcommand{\labelenumi}{\textup{(\roman{enumi})}}
        \item[(i)] $M_{\mathcal{E}}\langle \delta L\rangle$ is nef, and
        \item[(ii)] $K_X+\mathcal{E}-\delta(n-2)L$ is ample.
    \end{enumerate}
    Then $|\mathcal{E}|$ is a hyperbolic linear system outside of $\bigcup_{i\in\mathbb{N}}Z_i\cup \operatorname{Sing}(X)$.
\end{theorem}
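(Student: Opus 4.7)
The plan is to combine Proposition~\ref{proposition:nef-tangent} with the nefness hypothesis on $M_{\mathcal{E}}\langle \delta L\rangle$ and a rank count on the normal sheaf in order to turn the local geometry of curves on a very general section $Y \subset X$ into the uniform genus-degree inequality of Definition~\ref{def:alg-hyp}. Concretely, let $f\colon C \to Y$ be a non-constant generically injective map from a smooth projective curve $C$ into a very general section $Y$ of $|\mathcal{E}|$, with $f(C) \not\subset \bigcup_i Z_i \cup \operatorname{Sing}(X)$; the goal is to produce a uniform $\varepsilon > 0$ (independent of $C$, $f$, $Y$) such that $2g(C) - 2 \geq \varepsilon \deg f^* L$.

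First, I would fix a resolution $\rho\colon X' \to X$ that is an isomorphism on the smooth locus and set $\mathcal{E}' := \rho^*\mathcal{E}$. Gorenstein canonicity of $X$ ensures $H^0(X',\mathcal{E}') = H^0(X,\mathcal{E})$, so pulling back the defining sequence of $M_{\mathcal{E}}$ gives $\rho^* M_{\mathcal{E}} = M_{\mathcal{E}'}$, and hypothesis (i) transfers to the statement that $M_{\mathcal{E}'}\langle \delta \rho^* L\rangle$ is nef on $X'$. Applying Construction~\ref{construction} and Proposition~\ref{proposition:nef-tangent} to $f$ then produces some $b \in B$ and a surjection $M_{\mathcal{E}'}|_{C_b} \to Q$ with $\deg N_{f/Y} \geq \deg Q$, where $Q$ is the image of the map $\iota_b^* \mathcal{Q}_2 \to N_{g_b/Y'_b}$ appearing in diagram~\eqref{equation:setup-restricted}. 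Since $N_{g_b/Y'_b}$ has generic rank $n-2$ and $Q$ injects into it, we have $\rank Q \leq n-2$.

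Second, restricting to $C_b$, the sheaf $Q\langle \delta \rho^* L|_{C_b}\rangle$ is a quotient of the nef $\mathbb{Q}$-twisted sheaf $M_{\mathcal{E}'}\langle \delta \rho^* L\rangle|_{C_b}$, hence is itself nef. Nefness of a $\mathbb{Q}$-twisted sheaf on a curve yields $\deg Q + \rank(Q)\cdot \delta\, L\cdot f_* C \geq 0$, so
\[
\deg Q \;\geq\; -(n-2)\,\delta\, L\cdot f_* C.
\]
Combining this with the adjunction identity $2g(C) - 2 = \deg N_{f/Y} + (K_X + \mathcal{E})\cdot f_* C$ (valid on the Gorenstein variety $X$), I obtain
\[
2g(C) - 2 \;\geq\; \bigl(K_X + \mathcal{E} - (n-2)\delta L\bigr)\cdot f_* C.
\]
Hypothesis (ii) makes the divisor $K_X + \mathcal{E} - (n-2)\delta L$ ample, so there exists $\varepsilon > 0$ with $K_X + \mathcal{E} - (n-2)\delta L - \varepsilon L$ still nef, furnishing the required estimate $2g(C) - 2 \geq \varepsilon \deg f^* L$.

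The main hurdle is the careful bookkeeping to descend from $X'$ to $X$: establishing $\rho^* M_{\mathcal{E}} = M_{\mathcal{E}'}$, controlling $\rank Q \leq n-2$ via the diagram~\eqref{equation:setup-restricted}, and using Gorenstein canonicity to compare $\deg N_{f/Y}$ with $\deg N_{g_b/Y'_b}$ (the last already built into Proposition~\ref{proposition:nef-tangent}). Once the rank count and the identification of $\rho^* M_{\mathcal{E}}$ are secure, the numerical argument is essentially a one-line manipulation.
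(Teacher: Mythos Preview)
Your proposal is correct and follows essentially the same approach as the paper's own proof: invoke Proposition~\ref{proposition:nef-tangent} to obtain the surjection $M_{\mathcal{E}'}|_{C_b}\to Q$ with $\deg N_{f/Y}\geq \deg Q$, use nefness of $M_{\mathcal{E}}\langle\delta L\rangle$ (pulled back to $X'$) to bound $\deg Q$ from below, and combine with hypothesis~(ii) to extract the genus--degree inequality. Your explicit handling of the rank via $\rank Q\leq n-2$ is in fact slightly more careful than the paper's presentation, which writes the equality $\deg Q+\delta(n-2)\deg f'^*L=\deg Q\langle\delta L'\rangle$ without commenting on the rank; but the content is identical.
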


\begin{proof}
Let $Y\subset X$ be a very general hypersurface in $\left|\mathcal{E}\right|$, and $f:C\rightarrow Y$ a generically injective map from a smooth projective curve $C$ such that $f(C)\not\subset \bigcup_{i\in\mathbb{N}}Z_i\cup \operatorname{Sing}(X)$.
Denote by $\rho: X' \to X$ a resolution of $X$ which is an isomorphism over $X\setminus \operatorname{Sing}(X)$, $\mathcal{E}':=\rho^*\mathcal{E}$, $L':=\rho^*L$, $Y'\subset X'$ the closure of $Y\cap (X\setminus \operatorname{Sing}(X))$ in $X'$, and $f':C \to X'$ the lift of $f$ to its closure in $X'$.
Since we assume that $M_{\mathcal{E}}\langle \delta L\rangle$ is nef, 
$M_{\mathcal{E}'}\langle\delta L'\rangle$ is pseudo-nef outside of $\rho^{-1}({\rm Sing}(X))$.
By Proposition~\ref{proposition:nef-tangent}, for some $b\in B$ there is a surjection $M_{\mathcal{E}'}\vert_{C_b} \to Q$ where $Q$ is a sheaf with $\deg N_{f/Y} \geq \deg Q.$
Thus, $Q\langle \delta L'\rangle$ is nef as well, and 
\begin{align*}
    \deg N_{f/Y}\langle \delta f^* L\rangle = \deg N_{f/Y} + \delta (n-2) \deg f^* L  
    &\geq \deg Q+ \delta (n-2) \deg f^* L \\
    &= \deg Q+ \delta (n-2) \deg f'^*L = \deg Q\langle \delta L'\rangle\ge 0.
\end{align*}

Now $$\deg N_{f/Y}\langle \delta f^* L\rangle = 2g(C) - 2 - (K_X+\mathcal{E})\cdot f(C) + \delta(n-2) L \cdot f(C).$$ It follows from the assumption that $K_X+\mathcal{E}-\delta(n-2)L$ is ample that there is a small enough rational number $\varepsilon>0$ such that $K_X+\mathcal{E}-\delta(n-2)L-\varepsilon L$ is still ample.
Therefore, we have
\begin{equation*}
    2g(C)-2 \geq (K_X+\mathcal{E}-\delta(n-2)L)\cdot f(C) 
    =(K_X+\mathcal{E}-\delta(n-2)L-\varepsilon L)\cdot f(C) +\varepsilon\, L\cdot f(C)
    > \varepsilon\, L\cdot f(C).
\end{equation*}
The proof is now complete.
\end{proof}

The following is the almost homogeneous variant of the above that can be deduced analogously by replacing the role of Proposition~\ref{proposition:nef-tangent} by Proposition \ref{proposition:almost-homogeneous}.

\begin{theorem}\label{ahvariant}
Let $X$ be a Gorenstein canonical projective variety of dimension $n\geq 3$ containing a Zariski-open set $X_0$ admitting a transitive group action by an algebraic group $G$. 
    Let $\mathcal{E}$ be a globally generated line bundle and $L$ be an ample line bundle on $X$.
    Assume that there exists a rational number $\delta>0$ such that the following conditions are satisfied:
    \begin{enumerate}
    \renewcommand{\labelenumi}{\textup{(\roman{enumi})}}
        \item[(i)] $M_{\mathcal{E}}\langle \delta L\rangle$ is nef, and
        \item[(ii)] $K_X+\mathcal{E}-\delta(n-2)L$ is ample.
    \end{enumerate}
    Then $|\mathcal{E}|$ is a hyperbolic linear system outside of $(X\setminus X_0)\cup \operatorname{Sing}(X)$.
\end{theorem}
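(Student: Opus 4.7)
The plan is to run the same argument as in the proof of Theorem~\ref{thm:hyperbolicity-of-N+L} nearly verbatim, with the single substitution of Proposition~\ref{proposition:almost-homogeneous} in place of Proposition~\ref{proposition:nef-tangent}. First I would let $Y \in |\mathcal{E}|$ be a very general hypersurface and $f : C \to Y$ a generically injective map from a smooth projective curve $C$ with $f(C) \not\subset (X \setminus X_0) \cup \operatorname{Sing}(X)$. Choose a resolution $\rho : X' \to X$ that is an isomorphism over the smooth locus (and in particular over $X_0 \cap (X \setminus \operatorname{Sing}(X))$), set $\mathcal{E}' := \rho^*\mathcal{E}$, $L' := \rho^*L$, let $Y' \subset X'$ be the closure of $Y \cap (X \setminus \operatorname{Sing}(X))$, and lift $f$ to $f' : C \to Y'$.

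Next, apply Construction~\ref{construction} on $X'$ with $\mathcal{E}'$ so that the resulting Hilbert scheme $\mathcal{H}$ contains a point $[f']$ lying in a common irreducible component with $B$. Proposition~\ref{proposition:almost-homogeneous} then produces some $b \in B$ together with a surjection $M_{\mathcal{E}'}|_{C_b} \to Q$ satisfying $\deg N_{f/Y} \geq \deg Q$. Here the role played by pseudo-nefness of $\mathcal{T}_X$ in the proof of Theorem~\ref{thm:hyperbolicity-of-N+L} is replaced by the observation recorded in Remark~\ref{remark:almost-homogeneous}: for a curve not contained in $X \setminus X_0$, we may arrange the construction so that $\mathcal{Q}_3$ is torsion with $\iota_b^*\mathcal{Q}_3$ supported on $C_b \setminus X_0$, whence $\deg \iota_b^*\mathcal{Q}_3 \geq 0$ automatically.

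Finally, hypothesis (i) that $M_{\mathcal{E}}\langle \delta L\rangle$ is nef pulls back to nefness of $M_{\mathcal{E}'}\langle \delta L'\rangle$, so the quotient $Q\langle \delta L'\rangle$ has non-negative degree on $C_b$. Translating via $\deg N_{f/Y} = 2g(C) - 2 - (K_X + \mathcal{E}) \cdot f(C)$ (which uses that $X$ is Gorenstein canonical) and combining with hypothesis (ii)---which guarantees a small rational $\varepsilon > 0$ for which $K_X + \mathcal{E} - \delta(n-2)L - \varepsilon L$ remains ample---yields
\[
2g(C) - 2 \;\geq\; \bigl(K_X + \mathcal{E} - \delta(n-2)L\bigr) \cdot f(C) \;>\; \varepsilon\, L \cdot f(C),
\]
which is exactly the hyperbolicity estimate outside $(X \setminus X_0) \cup \operatorname{Sing}(X)$.

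Since the author has flagged the proof as analogous, I do not expect a substantive obstacle: the only step in Theorem~\ref{thm:hyperbolicity-of-N+L} that genuinely used pseudo-nefness of $\mathcal{T}_X$ was to force $\deg \iota_b^*\mathcal{Q}_3 \geq 0$, and in the almost-homogeneous setting this non-negativity comes for free from the group action via Remark~\ref{remark:almost-homogeneous}. The mildly delicate point is verifying that Construction~\ref{construction} can be arranged so that the chosen lift $f' : C \to Y'$ lives in the same irreducible component of $\mathcal{H}$ as the generic family member, but this is standard in the versal-deformation framework of~\cite{CoRi2, MY24}.
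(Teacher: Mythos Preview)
Your proposal is correct and matches the paper's approach exactly: the paper's proof consists solely of the remark that Theorem~\ref{ahvariant} is deduced analogously to Theorem~\ref{thm:hyperbolicity-of-N+L} by replacing Proposition~\ref{proposition:nef-tangent} with Proposition~\ref{proposition:almost-homogeneous}, which is precisely what you carry out. Your expanded write-up faithfully unwinds that analogy, including the key point that the non-negativity of $\deg \iota_b^*\mathcal{Q}_3$ now comes from Remark~\ref{remark:almost-homogeneous} rather than from nefness of the tangent bundle.
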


\section{Proofs of the main results}\label{proofs}

\subsection{Hyperbolicity for regular varieties} We first prove our results for varieties with $q(X)=0$. 

\begin{proof}[Proof of Theorem~\ref{main1}]
The result is known when $(X,{L})\cong (\pp^n,\mathcal{O}_{\pp^n}(1))$ (i.e., when $\tau({L})=n+1$) (see \cite{Ein88}), so we will assume that $(X,{L})\ncong (\pp^n,\mathcal{O}_{\pp^n}(1))$.

The result follows from Theorem~\ref{thm:hyperbolicity-of-N+L} with $\mathcal{E}=K_X+mL$ and $\delta=1$.
Since $L$ is ample and base-point free, $K_X+mL$ is globally generated and $M_{K_X+mL}\langle L\rangle$ is nef by Theorem~\ref{thm1} as $m\geq n+1$. 
Note that
$$K_X+\mathcal{E}-(n-2)L = 2K_X + (m-n+2)L$$
which is ample since $\frac{m-n+2}{2}>\tau(L)$.
\end{proof}


We include the following corollary to highlight the use of Theorem \ref{ahvariant} which can be proven analogously:

\begin{corollary}\label{cor-ahvariant}
    Let $X$ be a smooth regular projective variety of dimension $n\geq 3$ and let
${L}$ be a very ample line bundle on $X$.
Assume that $X$ contains a Zariski-open set $X_0$ admitting a transitive group action by an algebraic group $G$.
Then the linear system $|K_X+mL|$ is pseudo-hyperbolic for $m>\max\left\{n,n+2\tau({L})-2\right\}$.
\end{corollary}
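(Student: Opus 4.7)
The plan is to run the argument of Theorem~\ref{main1} verbatim, but invoke the almost homogeneous criterion Theorem~\ref{ahvariant} in place of Theorem~\ref{thm:hyperbolicity-of-N+L}. The hypotheses of Theorem~\ref{ahvariant} are precisely the same positivity conditions (i) and (ii), so no new syzygy computation is needed; the only conceptual difference is the shape of the conclusion, which gives hyperbolicity outside of $(X\setminus X_0)\cup\operatorname{Sing}(X)$ rather than outside of the union of a countable family.

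First I would dispose of the case $(X,L)\cong(\mathbb{P}^n,\mathcal{O}_{\mathbb{P}^n}(1))$ by quoting \cite{Ein88}, so we may assume $\tau(L)\leq n$. Then I set $\mathcal{E}:=K_X+mL$ and $\delta:=1$, and verify the two hypotheses of Theorem~\ref{ahvariant}. For (i), since $X$ is regular of dimension $n\geq 3$, $L$ is very ample and $m\geq n+1$, Theorem~\ref{thm1} shows that $M_{K_X+mL}\otimes L = M_{\mathcal{E}}\langle \delta L\rangle$ is globally generated and in particular nef. For (ii), observe
\[
K_X+\mathcal{E}-\delta(n-2)L \;=\; 2K_X+(m-n+2)L,
\]
which is ample since the nef value hypothesis $m>n+2\tau(L)-2$ is equivalent to $\tfrac{m-n+2}{2}>\tau(L)$.

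Now I apply Theorem~\ref{ahvariant}: a very general section of $|K_X+mL|$ is algebraically hyperbolic away from $(X\setminus X_0)\cup\operatorname{Sing}(X)=X\setminus X_0$, because $X$ is smooth. Since $X_0$ is Zariski-open and nonempty by assumption, $X\setminus X_0$ is a proper closed subset of $X$, so Definition~\ref{def:alg-hyp-ls} yields that $|K_X+mL|$ is pseudo-hyperbolic, as required.

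There is no real obstacle here: every ingredient has already been established in the paper. The only mild points to be careful about are (a) checking that the exceptional locus $X\setminus X_0$ is genuinely a proper closed subset (it is, because $X_0$ is open and nonempty), and (b) ensuring that Theorem~\ref{thm1} applies, which it does since $q(X)=0$ and $L$ is very ample with $m\geq n+1$. Thus the proof is essentially one application of Theorem~\ref{ahvariant} preceded by the same two verifications as in the proof of Theorem~\ref{main1}.
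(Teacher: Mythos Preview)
Your proposal is correct and matches the paper's intended argument: the paper explicitly states that Corollary~\ref{cor-ahvariant} ``can be proven analogously'' to Theorem~\ref{main1}, and your proof does exactly that, substituting Theorem~\ref{ahvariant} for Theorem~\ref{thm:hyperbolicity-of-N+L} and then observing that $X\setminus X_0$ is a proper closed subset (with $\operatorname{Sing}(X)=\emptyset$ since $X$ is smooth), so the conclusion is pseudo-hyperbolicity.
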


\subsection{Generalities on hyperbolicity for varieties with nef tangent bundles} We now prove the general results for varieties $X$ with nef $\mathcal{T}_X$. Special classes of such varieties  will be dealt with in the next subsection.

\begin{proof}[Proof of Theorem \ref{main4}]
Using Theorem~\ref{thm:hyperbolicity-of-N+L} with $\delta=1$, we need to verify that
\begin{itemize}
    \item $M_{K_X+mL}(L)$ is nef, and
    \item $2K_X+(m-n+2)L$ is ample.
\end{itemize}
The second one follows from our assumption that $m>n+2\tau(L)-2$, while the first one is a consequence of Corollary \ref{betacor} (and Remark \ref{rmk-va}).
\end{proof}

\begin{proof}[Proof of Corollary \ref{cor4}] Follows immediately from Theorem \ref{main4} {since $\beta(L_A)\leq 1$}.
\end{proof}

\subsection{Hyperbolicity for abelian varieties} 
First, we will prove Theorem \ref{mainmod}. For this purpose, we establish two auxiliary results:

\begin{proposition}\label{prop_abelain_suff_condition}
Let $X$ be an abelian variety of dimension $g$ and $L$ an ample and base-point free line bundle on $X$.
Assume that for any translation $Y$ of any abelian subvariety of $X$ with $1 \leq \dim Y \leq g-2$,
the rank of the restriction $H^0(L) \to H^0(L|_Y)$ is greater than $g -\dim Y$.
Then $|L|$ is hyperbolic.
\end{proposition}

\begin{proof}
Assume that $|{L}| $ is not hyperbolic.
By Remark \ref{Bloch}, there exists an abelian subvariety  $Z \subset X$ with $\dim Z \geq 1$ such that
very general (or any) $D \in |L| $ contains a translation of $Z$.
Since a general $D$ is irreducible and  is not a translation of an abelian subvariety, $\dim Z \leq g-2$.
Let $\pi : X \to X/Z$ be the canonical quotient morphism
and let
\begin{align*}
\Sigma \coloneqq \{ (D,t) \in |{L}| \times (X/Z) \mid \pi^{-1}(t) \subset D \}.
\end{align*}
By assumption,
the natural projection $\Sigma \to |{L}|$ is surjective.
For each $t \in X/Z$, we have an exact sequence
\begin{align*}
0 \to H^0(L \otimes \mathcal{I}_{\pi^{-1}(t)}) \to H^0(L) \xrightarrow{r} H^0(L|_{\pi^{-1}(t)}).
\end{align*}
Since $\pi^{-1}(t)$ is a translation of $Z$,
$\dim \mathrm{Im} \, r > g -\dim Z$ by assumption and hence $$h^0(L \otimes \mathcal{I}_{\pi^{-1}(t)}) < h^0( L) -(g -\dim Z).$$
Since the fiber of $\Sigma \to X/Z$ over $t$ is $|H^0({L} \otimes \mathcal{I}_{\pi^{-1}(t)}) |$,
it holds that
\begin{align*}
\dim \Sigma < \dim X/Z +  h^0( L) -(g -\dim Z) -1=h^0(L) -1 = \dim |{L}|,
\end{align*}
which contradicts the surjectivity of  $\Sigma \to |{L}|$.
\end{proof}

\begin{lemma}\label{lem_jet_bundle}
Let $ H $ be an ample line bundle on an abelian variety $Y$.
Let $p \geq 0$ be an integer and $V \subset H^0(Y,H)$ be a subspace.
Assume that $$V \hookrightarrow H^0(Y,H ) \to H \otimes \mathcal{O}_Y/\mathcal{I}_y^{p+1}$$ is surjective for any $y \in Y$.
Then $\dim V > \binom{p+\dim Y}{p}$ holds.
\end{lemma}

\begin{proof}
We have $\dim V \geq  \dim H \otimes \mathcal{O}_Y/\mathcal{I}_x^{p+1}= \binom{p+\dim Y}{p}$.
Assume $\dim V= \binom{p+\dim Y}{p}$.

Let $\Delta \subset Y \times Y$ be the diagonal and let $p_i : Y \times Y \to Y$ be the projection to the $i$-th factor for $i=1,2$.
By assumption, the natural map
\[
V \otimes \mathcal{O}_Y  \hookrightarrow H^0(Y,H)  \otimes \mathcal{O}_Y  \to {p_2}_* (p_1^* H \otimes \mathcal{O}_{Y \times Y}/\mathcal{I}_{\Delta}^{p+1}) \eqqcolon  J^{p}(H)
\]
is a surjection between vector bundles of the same rank, and hence is an isomorphism.
On the other hand, the degree of the jet bundle $J^{p}(H)$ is $$ \deg H  \cdot \rank  J^{p}(H) >0$$ by $\Omega^1_Y \simeq \mathcal{O}_Y^{\oplus \dim Y}$, which is a contradiction.
Hence $\dim V>  \binom{p+\dim Y}{p}$ holds.
\end{proof}

We are now ready to provide the 

\begin{proof}[Proof of Theorem~\ref{mainmod}]
Let $Y \subset X$ be a translation of an abelian subvariety of dimension $d \geq k+1$.
Let $V$  be the image of the restriction $H^0(L) \to H^0(L|_Y)$.
Since $L$ separates $p$-jets at any $x\in X$,
the map $$V \to L|_Y \otimes \mathcal{O}_Y/\mathcal{I}_y^{p+1}$$ is surjective for any $y \in Y$.
To prove the Kobayashi hyperbolicity of $|L|$,
it suffices to show $\dim V > g-d$ by Proposition \ref{prop_abelain_suff_condition}.
By Lemma \ref{lem_jet_bundle},
$\dim V > g-d$ holds if
$$\binom{p+d}{p}  \geq g-d.$$
Since $\binom{p+ d}{p} +d $ is increasing in $d$,
\eqref{eq_cor_p-jet_ample} implies $\binom{p+d}{p} +d \geq g$ for any $d \geq k+1$.

The last statement follows from Theorem \ref{It}. 
\end{proof}

\begin{remark}\label{rem_proof_of_first_version} \em{The last statement of Remark \ref{rmk1} says that } 
\begin{center}
    {\it the linear system $|L|$ is hyperbolic if 
$\mathcal{I}_o \langle \frac{1}{g-1} L \rangle $ is M-regular}
\end{center}
for any ample line bundle $L$ on an abelian variety $X$ of dimension $g \geq 3$.
An alternate way of seeing this using Theorem~\ref{thm:hyperbolicity-of-N+L} is as follows. Since $\mathcal{I}_o\langle \frac{1}{g-1}L \rangle$ is M-regular and $g\geq 3$,
$\mathcal{I}_o\langle \frac12L \rangle$ is M-regular as well.
Hence $L$ is very ample by \cite[Theorem 1.6]{MR4474904}.
Furthermore,
$M_L\langle \frac{1}{g-2} L \rangle$ is M-regular by Proposition \ref{prop_JP_8.1} and hence ample.
Then there is a rational number $0<\delta<\frac1{g-2}$ such that $M_{L}\langle\delta L\rangle$ is still ample.
To finish, we can apply Theorem~\ref{thm:hyperbolicity-of-N+L} to $\mathcal{E}=L$ and this $\delta$
since $$K_X+\mathcal{E}-\delta(g-2)L = (1-\delta(g-2))L $$ is ample.
\end{remark}

\begin{proof}[Proof of Corollary \ref{cor_KH_mL}]
    This follows from the last statement of Theorem \ref{mainmod}.
\end{proof}

\begin{proof}[Proof of Corollary \ref{cor_g-k}]
    If $k\geq g-2$, then any general $D \in |L|$ does not contain abelian subvarieties of positive dimension and hence is hyperbolic.
Hence we may assume $0 \leq k \leq g-3$.

Since  $\binom{m+k-1}{k+1} + k+1 $ and $\binom{m+k}{k+1} + k+1 $ are increasing in $m$,
it suffices to show that $$\binom{g-1}{k+1} + k+1 -g \geq 0$$ by Corollary \ref{cor_KH_mL}.
We fix $k$ and set $$f(g) = \binom{g-1}{k+1} + k+1 -g.$$ 
If $g=k+3$, we have $f(k+3) =k \geq 0$.
Since $f(g+1) -f(g) = \binom{g-1}{k} -1 \geq 0$ for $g \geq k+3$,
$f(g) \geq 0$ holds for any $g \geq k+3$.
\end{proof}

\begin{proof}[Proof of Corollary~\ref{cor-ab-2}]
For the line bundle ${L}$, we define the following invariant
\[
r(L):=
{\rm inf} \{ 
c\in \qq \mid \text{
there exists an effective $\qq$-divisor
$D\equiv cL$ such that 
$\mathcal{J}(X,D)=\mathcal{I}_o$
}
\}
\]
where $\mathcal{I}_o$ denotes the ideal sheaf of the origin in the abelian variety
and $\mathcal{J}(X,D)$ denotes the multiplier ideal of the pair.
If the inequality 
\[
d_1 \cdots d_g > \frac{4^g(g-1)^gg^g}{2g!} 
\]
holds and $(X, L)$ is very general, then the Seshadri constant
$\epsilon(X,L)$ is larger than $(g-1)g$ (see~\cite[Theorem 1.(b)]{Bau98}).
By~\cite[Lemma 1.2]{LPP}, we know that $r(L) < \frac{1}{g-1}$ provided that the Seshadri constant
$\epsilon(X,L)$ is larger than $g(g-1)$. 
On the other hand, by~\cite[Proposition 1.4]{Cau}, we know that $\beta(\underline{l})\leq r(L)$. 
Putting these inequalities together, we conclude that $\beta(L) < \frac{1}{g-1}$ if $(X, L)$ is very general. 
Then $\beta(L) < \frac{1}{g-1}$ holds if $(X, L)$ is general by \cite[Theorem 3.1]{Ito21}
and hence Theorem~\ref{mainmod} finishes the proof.
\end{proof}

\begin{proof}[Proof of Corollary~\ref{cor-ab-3}]
Follows from Theorem~\ref{mainmod} and~\cite[Theorem 1.5.(2)]{Ito21}.
\end{proof}

In order to prove Proposition \ref{intro-prop1-Ab}, we first need the following lemma.

\begin{lemma}\label{lem:alg-hyper-product-Ab}
Let $(X_i, L_i) $ be polarized abelian varieties for $i=1,2$ of positive dimensions,
and set $$(X, L) = (X_1 \times X_2,   L_1 \boxtimes  L_2).$$
If $\dim X_1 \geq h^0(X_2, L_2)$,
then any section of $ | L| $ contains a fiber of the natural projection $X_1 \times X_2 \to X_1$.
\end{lemma}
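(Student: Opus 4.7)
The plan is to use the Künneth formula on $H^0(X, L)$ and then reduce the problem to showing that a certain collection of effective divisors on $X_1$ has non-empty intersection.

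First I would decompose any given section using the canonical isomorphism
\[
H^0(X, L) \cong H^0(X_1, L_1) \otimes H^0(X_2, L_2).
\]
Fix a basis $s_{2,1}, \ldots, s_{2,k}$ of $H^0(X_2, L_2)$, where $k = h^0(X_2, L_2)$. Then an arbitrary $s \in H^0(X, L)$ can be written uniquely as $s = \sum_{i=1}^{k} s_{1,i} \otimes s_{2,i}$ for some sections $s_{1,i} \in H^0(X_1, L_1)$. For a point $x \in X_1$, the restriction of $s$ to the fiber $\{x\}\times X_2$ is $\sum_{i} s_{1,i}(x) \cdot s_{2,i}$, which vanishes identically on the fiber if and only if $s_{1,i}(x) = 0$ for every $i$, since the $s_{2,i}$ are linearly independent. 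Thus the divisor $(s=0)$ contains a fiber of the projection $X_1\times X_2 \to X_1$ if and only if there exists $x \in X_1$ lying in the common zero locus of $s_{1,1}, \ldots, s_{1,k}$.

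Next I would verify that such a point exists. Let $J \subseteq \{1,\ldots,k\}$ be the set of indices $i$ with $s_{1,i} \neq 0$, and for $i \in J$ let $D_i \subset X_1$ denote the effective divisor $(s_{1,i}=0) \in |L_1|$; note that for $i \notin J$ the condition $s_{1,i}(x) = 0$ is automatic. Since $L_1$ is a polarization on the abelian variety $X_1$, it is ample. I would then argue by induction on $|J| \leq k$ that $\bigcap_{i \in J} D_i$ is non-empty: at each inductive step, the intersection $Z_j := D_{i_1} \cap \cdots \cap D_{i_j}$ is a closed subscheme whose every irreducible component has dimension at least $\dim X_1 - j$. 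Indeed, intersecting an irreducible positive-dimensional subvariety $Z \subseteq X_1$ with the effective ample divisor $D_{i_{j+1}}$ yields either $Z$ itself (if $Z \subseteq D_{i_{j+1}}$) or a nonempty Cartier divisor on $Z$ of pure dimension $\dim Z - 1$ (because the restriction of $L_1$ to $Z$ is ample, so $D_{i_{j+1}} \cdot Z > 0$). Since $|J| \leq k \leq \dim X_1$, the final intersection has dimension $\geq 0$ and is thus non-empty.

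This is a direct argument and I do not anticipate a serious obstacle; the only subtle point to watch is the possibility that some of the $s_{1,i}$ vanish identically, which must be handled before invoking the dimension estimate on the intersection of divisors. Everything else is a routine application of Künneth together with ampleness of $L_1$.
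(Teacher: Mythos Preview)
Your proof is correct and follows essentially the same approach as the paper: both reduce to showing that the common vanishing locus on $X_1$ of the coefficient sections $s_{1,1},\ldots,s_{1,k}$ is nonempty when $k\le\dim X_1$. The paper packages this as the zero locus of the induced section ${p_1}_*s\colon\mathcal{O}_{X_1}\to L_1\otimes H^0(X_2,L_2)\cong L_1^{\oplus k}$ and invokes ampleness of $L_1$ in one line, whereas you unpack the same nonemptiness statement via an explicit inductive intersection argument; the content is identical.
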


\begin{proof} 
Take $s \in H^0(X, L) =H^0(X_1, L_1) \otimes H^0(X_2, L_2) $ and 
let $D \subset X$ be the divisor defined by $s$.
For $x_1 \in X_1$,
\begin{align*}
D \supset \{x_1\} \times X_2 &\Leftrightarrow s|_{\{x_1\} \times X_2} =0 \in ( L_1 \otimes k(x_1)) \otimes H^0(X_2, L_2)\\
& \Leftrightarrow  x_1 \in \mathrm{Zero}({p_1}_* s ),
\end{align*}
where $\mathrm{Zero}({p_1}_* s ) \subset X_1$ is the zero locus of the induced $ {p_1}_* s  : \mathcal{O}_{X_1} \to  L_1  \otimes H^0(X_2, L_2)$ on $X_1$.
Thus if $\dim X_1 \geq h^0(X_2, L_2)$,
$\mathrm{Zero}({p_1}_* s ) \neq \emptyset$ since $ L_1$ is ample.
Hence $D$ contains the fiber $\{x_1\} \times X_2$ over $x_1 \in \mathrm{Zero}({p_1}_* s ) $.
\end{proof}

\begin{proof}[Proof of Proposition \ref{intro-prop1-Ab}]
Let $(X,L)=(X',L') \times (E,\Theta)$ be as in the statement.
Then any section of $|(g-1) L|$ contains an elliptic curve, due to Lemma~\ref{lem:alg-hyper-product-Ab}, 
since $$ \dim X'=g-1 =h^0((g-1) \Theta).$$
Hence the linear system $|(g-1) L|$ is not hyperbolic by definition.
\end{proof}

We can give an alternative proof of \cite[Theorem A]{caucci_KH} using jet-separation:

\begin{theorem}[{\cite[Theorem A]{caucci_KH}}]\label{caa}
Let $X$ be an abelian variety of dimension $g \geq 3$ and  $L$ be an ample line bundle on $X$.
Assume that $$(X,L)\ncong(X',L') \times (E,\Theta)$$ with $ (E,\Theta)$ a principally polarized elliptic curve.
Then the linear system $|(g-1)L|$ is hyperbolic.
\end{theorem}

\begin{proof}
By Corollary \ref{cor_g-k}(2) for $k=0$,
we may assume that $L$ has base divisors.
Then $(X,L)$ is isomorphic to 
\[
(X',L') \times (A_1, \Theta_1) \times \cdots \times (A_l, \Theta_l)
\]
where $L'$ has no base divisors and each $(A_i, \Theta_i)$ is an indecomposable  p.p.a.v.\ with $\dim A_i \geq 2$.
For a base-point free line bundle $N$,  $f_{N}$ denotes the morphism defined by the complete linear system $|N|$.
By K\"unneth formula,
the morphism $ f_{2L}$ is the product $f_{2L'} \times f_{2\Theta_1} \times \cdots \times f_{2\Theta_l}$.
By \cite{MR871633}, $f_{2L'} $ is very ample.
On the other hand,
$f_{2\Theta_i}  $ is the natural morphism $A_i \to A_i/\langle -1_{A_i} \rangle$ to the Kummer variety up to a translation.
In particular, $f_{2\Theta_i}  $ is an immersion outside the set of  two-torsion points,  whose codimension is $\dim A_i \geq 2$.
Hence $ f_{2L}$ is an immersion outside a subset whose codimension is at least two.
Thus $ \mathcal{I}_o^2 \otimes 2L$  is M-regular by \cite[Example 5.4 (1)]{MR4474904}.
Then $ \mathcal{I}_o^h \otimes hL$ is IT$_0$ for $h \geq 3$ by \cite[Proposition 3.1 (ii)]{MR4474904}.

If $g \geq 4$, then $ \mathcal{I}_o^{g-1} \otimes (g-1)L$ is IT$_0$ and hence 
$h^1( \mathcal{I}_x^{g-1} \otimes (g-1)L)=0$ for any $x \in X$.
This means that $(g-1)L$ separates $(g-2)$-jets at any $x $ and hence $|(g-1)L|$ is hyperbolic by Remark \ref{rmk1}.
  
Assume $g=3$.
Then $(X,L) =(A,\Theta)$ is an indecomposable p.p.a.v., or 
$(X,L) =(X',L') \times (A, \Theta)$ with $\dim X'=1, \deg L' \geq 2 $ and indecomposable p.p.a.v. $(A,\Theta)$.
We prove the hyperbolicity of $|2L|$ by modifying the proof of Proposition \ref{prop_abelain_suff_condition} as follows.

Let $C \subset X$ be an elliptic curve and let $r_C : H^0(2L) \to H^0(2L|_C)$ be the restriction map.
Since $ 2L$ is base-point free and ample, we have $\rank r_C \geq 2$.
Assume $\rank r_C = 2$.
Then the image $f_{2L} (C) $ is isomorphic to a line in $ \mathbb{P}(H^0(2L))$.

Let $Z \subset X$ be an abelian subvariety of $\dim Z=1$.
Let $\pi : X \to X/Z$ be the canonical quotient morphism
and let
\begin{align*}
\Sigma \coloneqq \{ (D,t) \in |{2L}| \times (X/Z) \mid \pi^{-1}(t) \subset D \}.
\end{align*}
To prove the hyperbolicity of $|2L|$,
it suffices to show $ \dim \Sigma < \dim |2L|$.

Let 
\[
W \coloneqq \{ t \in X/Z \mid \rank r_{\pi^{-1}(t)}=2\}=\{ t \in X/Z \mid  f_{2L}(\pi^{-1}(t)) \text{ is a line}\}.
\]
If $f_{2L}(\pi^{-1}(t)) $ is a line, 
then $\pi^{-1}(t) \to f_{2L}(\pi^{-1}(t))$ is not \'etale and hence
$f_{2L}$ is not an immersion at some point in $\pi^{-1}(t)$.
Hence $\pi^{-1}(t) \cap A_{[2]} \neq \emptyset$ if $(X,L) =(A,\Theta)$,
and $\pi^{-1}(t) \cap (X' \times A_{[2]}) \neq \emptyset$ if $(X,L) =(X',L') \times (A,\Theta)$,
where $A_{[2]}$ is the set of two torsion points of $A$.
Since $A_{[2]}$ is a finite set, $\dim X' =1$ and $\dim X/Z=2$,
we see that $W$ is a proper closed subset of $X/Z$.

Since 
$\rank r_{\pi^{-1}(t) } \geq 3$ for $t \in (X/Z) \setminus W$,
we have
\begin{align*}
\dim \Sigma \leq  \max \{ \dim (X/Z) \setminus W + h^0(2L) - 4  , \dim W + h^0(2L) - 3 \} =h^0(2L) -2 < \dim |2L|.
\end{align*}
Thus the projection $\Sigma \to |2L| $ is not surjective and hence $|2L|$ is hyperbolic. 
\end{proof}

\subsection{Hyperbolicity for homogeneous, hyperelliptic and Kummer varieties} We end this article by proving the last three results mentioned in the introduction.

\begin{proof}[Proof of Theorem \ref{thm3'}]
Apply the Borel--Remmert structure theorem to see that $X=A\times Y$ where $A$ is abelian and $Y$ is rational homogeneous. Note that $\dim Y=n-q(X)$ and $\dim A=q(X)$. Furthermore, $L=L_1\boxtimes L_2$ where $L_1$ (resp. $L_2)$ is an ample line bundle on $A$ (resp. $Y$). Also note that $L_2$ is very ample.

(1) Note that $m>n-1\geq n-q(X)$. By Theorem~\ref{thm:hyperbolicity-of-N+L} and Proposition \ref{prop2'}, we need to look for a positive rational number $\delta$ such that 
\begin{itemize}
    \item $M_{mL_1}\langle \delta L_1\rangle$ is nef,
    \item $M_{K_Y+mL_2}\langle \delta L_2\rangle$ is nef, and 
    \item $2K_X+mL-\delta(n-2)L$ is ample, which is satisfied if $\frac{m-\delta(n-2)}{2}>\tau(L)$.
\end{itemize}
Since $m\geq 2$ by assumption, by Remark \ref{ab-g+1'}(2), the first condition is satisfied if $\delta\geq \frac{m}{m-1}$. The second condition is also satisfied by any $\delta$ in this range by Theorem \ref{thm1}, 
except when $Y\simeq \mathbb{P}^{n-q(X)}$, $L_2\simeq \mathcal{O}_{\mathbb{P}^{n-q(X)}}(1)$, and $m=n-q(X)+1$ hold simultaneously.
In this particular case, condition (i) in Theorem~\ref{thm:hyperbolicity-of-N+L} with $\delta\geq\frac{m}{m-1}$ is met anyway, since $K_Y+mL_2$ is trivial, and $M_{mL_1\boxtimes \mathcal{O}_Y}\langle\delta(L_1\boxtimes L_2)\rangle \cong (M_{mL_1}\boxtimes \mathcal{O}_Y)\langle\delta(L_1\boxtimes L_2)\rangle$ is nef. Now the third condition is equivalent to $\delta<\frac{m-2\tau(L)}{n-2}$. Thus, to prove the existence of a rational number $\delta$ satisfying the three conditions, it is enough to show that $$\frac{m}{m-1}<\frac{m-2\tau(L)}{n-2},$$ which upon simplification gives $$m^2-m(n+2\tau(L)-1)+2\tau(L)>0.$$
It is easy to check that the above is satisfied when $m\geq n+2\tau(L)-1$.

(2) The proof is analogous but here we take $\delta=1$. The last two conditions (described in the proof of the first part) can be verified analogously. To verify the first condition, we can use Corollary \ref{ab-g+1} instead of Remark \ref{ab-g+1'}(2) {since $m > n+2\tau(L) -2 \geq q(X)-1=\dim A-1$}.
\end{proof}

\begin{proof}[{Proof of Theorem \ref{hyp}}]
We only prove (2). Applying Theorem~\ref{thm:hyperbolicity-of-N+L} with $\delta=2$, we only need to show that 
\begin{itemize}
    \item $M_{N+mL}(2L)$ is nef, and
    \item $K_X+N+mL-2(n-2)L$ is ample. 
\end{itemize}
The first one follows from Theorem \ref{caucci}(2) and the second one is obvious as $K_X\equiv 0$. 

To prove (1), just use Theorem \ref{caucci}(1) instead.
\end{proof}

As explained in \cite{Cau25}, there are many varieties that satisfy the hypothesis of Theorem \ref{hyp}, we include one corollary along that line (see {\it loc. cit.} for details):

\begin{corollary}\label{BdF}
    Let $X$ be a  Bagnera-de Franchis variety of dimension $n\geq 3$. Then, for any ample (resp. numerically trivial) line bundle $L$ (resp. $N$)  on $X$, the linear system $|N+mL|$ is hyperbolic for $m\geq 2n$.
\end{corollary}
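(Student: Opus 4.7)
The plan is simply to verify that Bagnera-de Franchis varieties fall within the scope of Theorem \ref{hyp} and then quote that theorem. Recall that, by definition, a Bagnera-de Franchis variety is the quotient $X = A/G$ of an abelian variety $A$ by a finite cyclic group $G$ acting freely on $A$, where the action is generated by an automorphism of the form $a \mapsto \varphi(a) + t$ with $\varphi$ a nontrivial linear automorphism of $A$ and $t \in A$ a translation of appropriate order; in particular the $G$-action is commutative and is not by translations alone.

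First I would observe that these are precisely the ingredients required by Theorem \ref{hyp}: the group $G$ is commutative, acts freely, and does not act solely by translations, and by hypothesis $n = \dim X \geq 3$. The remaining condition to check is that $q(X) > 0$. This follows from the classical structure of Bagnera-de Franchis varieties: the invariants $H^0(A, \Omega^1_A)^G$ are nonzero since $\varphi$ always has a nontrivial fixed subspace on the cotangent space at the origin (otherwise the quotient would have trivial Albanese and yet still be a finite étale quotient of an abelian variety, forcing $\varphi = \mathrm{id}$, which is excluded). Hence $q(X) = \dim H^0(A,\Omega^1_A)^G > 0$.

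Once these hypotheses are verified, Theorem \ref{hyp} applies directly to the pair $(L, N)$ with $L$ ample and $N \equiv 0$, yielding that $|N + mL|$ is hyperbolic for every $m \geq 2n$. No further argument is needed; there is no real obstacle here, since the substantive content has already been absorbed into Theorem \ref{hyp} (which itself rests on Caucci's GV-regularity result, Theorem \ref{caucci}, combined with the general criterion Theorem \ref{thm:hyperbolicity-of-N+L} applied with $\delta = 2$). The only mild subtlety is bookkeeping on the definition of a Bagnera-de Franchis variety to ensure $q(X) > 0$, but this is standard and can be dispatched in one line citing the basic structure theory (as recalled in \cite{Cau25}).
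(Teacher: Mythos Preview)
Your approach is exactly the paper's: the corollary is stated immediately after Theorem \ref{hyp} with the remark that Bagnera--de Franchis varieties satisfy its hypotheses (details deferred to \cite{Cau25}), so reducing to Theorem \ref{hyp} is precisely what is intended.

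One small correction: your justification that $q(X)>0$ is not quite right. Saying ``otherwise the quotient would have trivial Albanese \ldots\ forcing $\varphi=\mathrm{id}$'' is circular; there is no general principle that a free \'etale quotient of an abelian variety must have positive irregularity unless the linear part is trivial. The correct one-line argument is: if $\varphi$ had no eigenvalue $1$ on $H^0(A,\Omega^1_A)$, then $\varphi-\mathrm{id}$ would be an isogeny of $A$, hence surjective, and the equation $(\varphi-\mathrm{id})(a)=-t$ would have a solution, giving a fixed point of the generator $a\mapsto\varphi(a)+t$ and contradicting freeness of the action. With this fix your verification of the hypotheses of Theorem \ref{hyp} is complete.
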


\begin{proof}[Proof of Theorem \ref{thm-kummer}]
In the proof of Theorem~\ref{thm:hyperbolicity-of-N+L},
we only use the positivity of $\mathcal{T}_X$ to argue that 
$\deg (Q'|_C) \geq 0$ for a quotient coherent sheaf  $\mathcal{T}_X\rightarrow Q'\rightarrow 0$ and curves 
$C\subset Y$ contained in general elements of $Y\in |\mathcal{E}|$.  
In the case of a Kummer variety, $K(A)$ has isolated singularities. 
For any $m \geq 1$,
the line bundle $\mathcal{E}:=mL$ is base-point free by~\cite[Lemma 2.5, Page 12]{MR4688553}. Therefore, a general element $Y\in |mL|$ is contained in the smooth locus of $K(A)$. By Lemma~\ref{kummer}, we know that 
$\deg(Q'|_C)\geq 0$ for any quotient $Q'$ of $\mathcal{T}_X$ and any curve $C\subset K(A)$ which avoids the singular locus. In particular, such property holds for any $C\subset Y$ with $Y\in |mL|$ general.
 
On the other hand, it follows from Lemma \ref{pos-kummer} and Lemma \ref{gvn} that
$\pi^* M_{mL} \langle \frac{m}{2(m-1)} \pi^*L \rangle$ is ample and hence so is 
$M_{mL} \langle \frac{m}{2(m-1)} L\rangle$.
Consequently $M_{mL} \langle \delta L \rangle$ is nef if  $\delta =\frac{m}{2(m-1)} -\varepsilon  $ for $0 < \varepsilon \ll 1$.
For $n=\dim K(A) \geq 3$, we have
\[
K_{K(A)} +mL -\delta(n-2) L \equiv \left(m -\frac{m(n-2)}{2(m-1)}+ (n-2)\varepsilon \right)L
\]
is ample if and only if $m \geq \frac{n}2$. 
Therefore, the proof of Theorem~\ref{thm:hyperbolicity-of-N+L}
applies in this setting to imply that $|mL|$ is hyperbolic.
\end{proof}
\bibliographystyle{habbvr}
\bibliography{bib}

\end{document}